\documentclass[twoside]{article}
\usepackage[accepted]{aistats2026}

\usepackage{xcolor}
\usepackage{amsfonts}
\usepackage{enumitem}
\usepackage{amsthm}
\usepackage{stmaryrd}
\usepackage{hyperref}
\usepackage{natbib}
\usepackage{multirow}

\newtheorem{theorem}{Theorem}[section]
\newtheorem{lemma}[theorem]{Lemma}
\newtheorem{corollary}[theorem]{Corollary}
\newtheorem{proposition}[theorem]{Proposition}

\theoremstyle{definition}
\newtheorem{definition}[theorem]{Definition}

\newtheorem{assumption}{Assumption}

\theoremstyle{remark}
\newtheorem{remark}[theorem]{Remark}

\begin{document}

\twocolumn[

\aistatstitle{On propagation of chaos for the Fisher-Rao gradient flow in entropic mean-field optimization}

\aistatsauthor{ Petra Lazi\'{c} \And Linshan Liu \And  Mateusz B.\ Majka }

\aistatsaddress{ University of Ljubljana and\\ University of Zagreb  \And  Heriot-Watt University and\\ Maxwell Institute for\\ Mathematical Sciences  \And Heriot-Watt University and\\ Maxwell Institute for\\ Mathematical Sciences} ]

\begin{abstract}
We consider a class of optimization problems on the space of probability measures motivated by the mean-field approach to studying neural networks. Such problems can be solved by constructing continuous-time gradient flows that converge to the minimizer of the energy function under consideration, and then implementing discrete-time algorithms that approximate the flow. In this work, we focus on the Fisher-Rao gradient flow and we construct an interacting particle system that approximates the flow as its mean-field limit. We discuss the connection between the energy function, the gradient flow and the particle system and explain different approaches to smoothing out the energy function with an appropriate kernel in a way that allows for the particle system to be well-defined. We provide a rigorous proof of the existence and uniqueness of thus obtained kernelized flows, as well as a propagation of chaos result that provides a theoretical justification for using the corresponding kernelized particle systems as approximation algorithms in entropic mean-field optimization.
\end{abstract}

\section{INTRODUCTION}

We consider the following optimization problem on the space of probability measures $\mathcal{P}(\mathcal{X})$ on $\mathcal{X} \subset \mathbb{R}^d$
\begin{equation}\label{eq: energy}
\min_{m \in \mathcal{P}(\mathcal{X})}V^\sigma(m), \quad  V^\sigma(m):= F(m) + \sigma \operatorname{KL}(m|\pi),
\end{equation}
where $F: \mathcal{P}(\mathcal{X}) \to \mathbb{R}$ is a (possibly non-convex) functional bounded from below, $\sigma > 0$ is a regularization parameter, $\pi \in \mathcal{P}(\mathcal{X})$ is a fixed reference measure and $\operatorname{KL}$ denotes the relative entropy (the KL-divergence). While some general results in Section \ref{sec: main} will be stated for domains $\mathcal{X} \subset \mathbb{R}^d$ which do not have to be compact, for the crucial examples studied in Section \ref{sec: verification} we will additionally require $\mathcal{X}$ to be compact. In recent years, there has been considerable interest in problems of this type, motivated by the mean-field approach to the problem of training neural networks (see \cite{Mei_2018} or \cite[Section 3]{hu2021mean} and the references therein), as well as in the context of reinforcement learning, in policy optimization for entropy-regularized Markov Decision Processes with neural network approximation in the mean-field regime \citep{leahy, LascuMajka2025}.

In order to solve \eqref{eq: energy}, one typically aims to construct a gradient flow $(\mu_t)_{t \geq 0}$ on $\mathcal{P}(\mathcal{X})$ that converges to the minimizer $m^{*,\sigma}$ of \eqref{eq: energy} when $t \to \infty$. The most commonly used example is the Wasserstein gradient flow
\begin{equation}\label{eq: Wasserstein_gradient_flow}
\partial_t \mu_t =  \nabla \cdot\left( \mu_t \nabla \frac{\delta V^\sigma}{\delta \mu}(\mu_t, \cdot)\right)
\end{equation}
defined via the flat derivative (first variation) $\frac{\delta V^\sigma}{\delta \mu}$ of the energy function $V^\sigma$ (see Definition \ref{def fderivative}). The conditions guaranteeing the convergence of \eqref{eq: Wasserstein_gradient_flow} to $m^{*,\sigma}$ have been studied by numerous authors in various settings, including \cite{AmbrosioGigliSavare2008, hu2021mean, Nitanda2022, Chizat2022, leahy} and many others. 

However, from the point of view of applications, an equally important question is how to approximate gradient flows such as \eqref{eq: Wasserstein_gradient_flow} by a practically implementable algorithm. One possible approach leads through the Jordan-Kinderlehrer-Otto (JKO) schemes (see \cite{JKO} for the original paper or \cite{korbaproximal, lascu2024linearconvergenceproximaldescent} for more recent expositions). Another, which we are going to focus on in the present paper, utilizes an interpretation of \eqref{eq: Wasserstein_gradient_flow} as the mean-field limit of an interacting particle system. In the latter approach, one typically aims to prove a propagation of chaos result, i.e., a result that shows that as the number of particles approaches infinity, particles become independent and they all follow the same mean-field dynamics. This can be then used as a theoretical justification that an appropriately constructed interacting particle system may be used to produce (after a discretisation) an algorithm that approximates the minimizer of \eqref{eq: energy} when the number of particles and the number of iterations are both sufficiently large.

Propagation of chaos for the Wasserstein gradient flow has been studied in detail in various settings (utilizing the interpretation of \eqref{eq: Wasserstein_gradient_flow} as the Fokker-Planck equation for the mean-field Langevin SDE). A (far from complete) list of references includes \cite{ChenRenWang, MonmarcheRenWang2024, durmusandreasgullinzimmer2020, carrilloMcCannVillani2003, malrieu2001,  delaruetse2021, lackerleflem2023, SuzukiNitandaWu2023, Nitanda2024, NitandaLee2025, GuKim2025}. A related active strain of research involves the propagation of chaos for kinetic models \cite{monmarche2017, guillinmonmarche2021, schuh2022, ChenLinRenWang2024}.

In the present work, we focus on a different gradient flow, the so-called Fisher-Rao gradient flow given by
\begin{equation}\label{eq: FR}
\partial_t \mu_t = -\mu_t \frac{\delta V^\sigma}{\delta \mu}(\mu_t,\cdot).
\end{equation}
The interest in studying this flow in the context of  optimization problems \eqref{eq: energy} is motivated by the fact that in some settings, its convergence to the minimizer could be easier to verify than for the Wasserstein flow \cite{Liu-Majka-Szpruch-2023, Kerimkulov2025, lascu2024a}. There has been a considerable literature studying the fundamental properties of Fisher-Rao gradient flows such as well-posedness in various settings, see e.g.\ \cite{carrillo2024, zhu2024kernelapproximationfisherraogradient} and the references therein, also in combination with the Wasserstein flow as the Wasserstein-Fisher-Rao gradient flow (also known as Hellinger-Kantorovich) \cite{LieroMielkeSavare2018, Gallouet2017, LuLuNolen2019, vandenEijnden2019}.

However, unlike for the Wasserstein gradient flow \eqref{eq: Wasserstein_gradient_flow}, in the context of the Fisher-Rao flow \eqref{eq: FR} much less is known about particle approximations (with a few exceptions that will be discussed in detail in Section \ref{subsec: other_poc}). In particular, the main goal of the present paper is to fill a gap in the literature by providing a rigorous proof for a particle approximation for the Fisher-Rao gradient flow \eqref{eq: FR} corresponding to a class of minimization problems of the form \eqref{eq: energy}.

\subsection{Contributions}
We summarize the main contributions of our paper:
\begin{itemize}
	\item In Section \ref{sec: main}, we propose a general framework for studying particle approximations of Fisher-Rao gradient flows. This part extends the results from \cite{LeCavil2017, LuLuNolen2019, vandenEijnden2019, Domingo-EnrichBruna} (see Remark \ref{remark: Domingo} and Section \ref{subsec: other_poc} for details).
	
	\item In Section \ref{sec: verification}, we discuss different approaches to smoothing out Fisher-Rao flows (which is crucial for practical implementation) and we show that all the discussed methods fall within our framework from Section \ref{sec: main}. This part expands on \cite{LuLuNolen2019, Pampel_2023, Lu_2023, Carrillo2019}.
	
	\item In Section \ref{sec: particle_system}, we propose a practical algorithm for approximating Fisher-Rao flows, utilizing a method from Section \ref{sec: verification}.
\end{itemize}

\section{MAIN RESULTS}\label{sec: main}

In this section, we work on a general (not necessarily compact) space $\mathcal{X} \subset \mathbb{R}^d$ and we focus on the flow
\begin{equation}\label{eq: FR with a}
    \partial_t \mu_t = - \mu_t a(\mu_t, \cdot)
\end{equation}
where the flat derivative $\frac{\delta V^{\sigma}}{\delta \mu}$ on the right hand side of \eqref{eq: FR} has been replaced with a function $a: \mathcal{P}(\mathcal{X}) \times \mathcal{X} \to \mathbb{R}$. In Section \ref{sec: verification}, we will explain the rationale behind approximating the flat derivative $\frac{\delta V^{\sigma}}{\delta \mu}$ of the energy function $V^{\sigma}$ in \eqref{eq: energy} with its kernelized counterpart. Different choices of kernelizations will lead to different choices of $a$ and hence in this section we keep our notation general, in order to produce a broadly applicable theoretical framework. For convenience, we will refer to flows \eqref{eq: FR with a} as Fisher-Rao flows, although it should be understood that they are "true" Fisher-Rao flows only for some choices of $a$ (see Section \ref{sec: kernelization} for more details).

We remark that we always choose $a$ in a way that ensures that $\mu_t$ remains a probability distribution. This is achieved by requiring that for all $\mu \in \mathcal{P}(\mathcal{X})$,
\begin{equation}\label{eq: mass preservation}
\int_{\mathcal{X}} a(\mu,x) \mu(dx) = 0,
\end{equation}
which then immediately implies that $\partial_t \int_{\mathcal{X}} \mu_t(dx) = - \int_{\mathcal{X}} a(\mu_t,x) \mu_t(dx) =0$, and hence the flow preserves the mass of the initial measure. It is easy to check that all examples of $a$ studied in Section \ref{sec: verification} satisfy property \eqref{eq: mass preservation}.

\subsection{Preliminaries}

Before we proceed, let us introduce some necessary notation. 
For \(p\in[1,\infty)\), and for any normed vector space $(E,\lVert \cdot \rVert)$, we define the set of probability measures with finite \(p\)-th moment \(\mathcal{P}_p(E)\) as
$$\mathcal{P}_p(E) := \left\{ \mu \in \mathcal{P}(E) : \int_{E} \lVert x\rVert^p \mu(dx) < \infty \right\}.$$
We equip \(\mathcal{P}_p(E)\) with the \(p\)-Wasserstein distance
\[
\mathcal{W}_p(m,m')
:= \Bigg(\inf_{\pi\in\Pi(m,m')}
\int_E \lVert x-y \rVert^p\, \pi(dx,dy)\Bigg)^{1/p},
\]
where \(\Pi(m,m')\subset \mathcal{P}(E \times E)\) is the set of all couplings of \(m\) and \(m'\).

For any $T>0$, we consider the path space \(\mathcal{C}([0,T];E)\) with the supremum norm
\[
\lVert x-y \rVert_{T}:=\sup_{0\le s\le T} \lVert x_s-y_s \rVert,
\]
and write \(\mathcal{P}_p(\mathcal{C}([0,T];E))\) for the corresponding \(p\)-moment space. 
The induced \(p\)-Wasserstein distance on path measures is
\[
\mathcal{W}_{p,T}(m,m')
:= \Bigg(\inf_{\pi\in\Pi(m,m')}
\int \lVert x-y \rVert_{T}^{\,p}\, \pi(dx,dy)\Bigg)^{1/p}.
\]

We require $a$ to satisfy the following assumption:
\begin{assumption}\label{assumption1}
    Function $a$ is bounded and Lipschitz, i.e., there exist constants $M_a$, $L_a > 0$ such that for all \(x,y\in\mathcal{X}\) and \(m,m'\in\mathcal{P}_2(\mathcal{X})\),
\begin{equation}\label{eq: boundness_a}
    |a(m,x)| \le M_a,
\end{equation}
and
\begin{equation}\label{eq: Lipschitz_a}
    |a(m,x)-a(m',y)| \le L_a\big(|x-y|+\mathcal{W}_2(m,m')\big).
\end{equation}
\end{assumption}

Following the ideas from \cite{LieroMielkeSavare2018} (see also \cite{Domingo-EnrichBruna}), we work with the notions of lifts and projections of measures. This will allow us to use measures defined on the extended space \(\mathcal{X} \times \mathbb{R}_+\), with the second component representing a local weight.

\begin{definition}[Lifted measure and projection]\label{def: lifted_projection}
Let $\mu \in \mathcal{P}_1(\mathcal{X})$ and $\nu \in \mathcal{P}_1(\mathcal{X} \times \mathbb{R}_+)$.
We say that $\nu$ is a lifted measure of $\mu$, and conversely that $\mu$ is the projection of $\nu$, if for all $\varphi \in \mathcal{C}_c^\infty(\mathcal{X})$,
\begin{equation}\label{eq: projection}
    \int_{\mathcal{X}} \varphi(x) \, d\mu(x)
    = \int_{\mathcal{X} \times \mathbb{R}_+} w \, \varphi(x) \, d\nu(x, w),
\end{equation}
where $\mathcal{C}_c^\infty(\mathcal{X})$ denotes the space of smooth, compactly supported functions on $\mathcal{X}$.  
In this case we define the projection operator
\[
    \texttt{h} : \mathcal{P}_1(\mathcal{X} \times \mathbb{R}_+) \to \mathcal{P}_1(\mathcal{X}),
\]
so that $\mu = \texttt{h} \nu$ whenever \eqref{eq: projection} holds.
\end{definition}

Note that the requirement for the measures in Definition \ref{def: lifted_projection} to have finite first moments ensures that the integral on the right hand side of \eqref{eq: projection} is finite.

\subsection{Existence of mean-field dynamics corresponding to Fisher-Rao flow}

Building on the notion of lifted and projected measures defined above, we now establish a rigorous connection between the Fisher--Rao gradient flow and a corresponding mean-field (single particle) dynamics. 
The key idea is to lift the flow from the probability space $\mathcal{P}_1(\mathcal{X})$ to the extended space $\mathcal{P}_1(\mathcal{X}\times\mathbb{R}_+)$, where the additional coordinate $w$ represents a local mass weight. 
This lifted formulation reveals that the original Fisher--Rao flow can be interpreted as the projection of an evolution equation on the extended space. 
Based on this representation, we construct a mean-field dynamic whose law evolves according to the lifted flow, and show that, under suitable regularity assumptions, this dynamic admits a unique solution on any finite time horizon. We begin by stating the precise correspondence between the lifted and projected flows.

\begin{definition}\label{def: lifted_flow_weak}
Let $\nu_0 \in \mathcal{P}_1(\mathcal{X} \times \mathbb{R}_+)$. We say that $(\nu_t)_{t \ge 0} \subset \mathcal{P}_1(\mathcal{X} \times \mathbb{R}_+)$ is a weak solution to the lifted flow
\begin{equation}\label{eq: lifted_flow}
    \partial_t \nu_t(x,w) = \frac{\partial}{\partial w} \big( \nu_t(x,w) \, w \, a(\texttt{h}\nu_t, x) \big),
\end{equation}
with initial condition $\nu_0$, if for any $\psi \in \mathcal{C}_c^\infty(\mathcal{X} \times \mathbb{R}_+)$ and any $t > 0$ we have
\begin{equation}\label{eq: lifted_weak_form}
\begin{split}
    &\frac{d}{dt} \int_{\mathcal{X} \times \mathbb{R}_+} \psi(x,w) \, d\nu_t(x,w)\\
    &= - \int_{\mathcal{X} \times \mathbb{R}_+} w \, a(\texttt{h}\nu_t, x) \, \frac{\partial}{\partial w} \psi(x,w) \, d\nu_t(x,w).
\end{split}
\end{equation}
\end{definition}

\begin{definition}\label{def: FR_flow_weak}
Let $\mu_0 \in \mathcal{P}_1(\mathcal{X})$.
We say that $(\mu_t)_{t \ge 0} \subset \mathcal{P}_1(\mathcal{X})$ is a weak solution to the Fisher-Rao gradient flow
\begin{equation}\label{eq: FR_flow}
    \partial_t \mu_t = -\mu_t \, a(\mu_t, \cdot),
\end{equation}
with initial condition $\mu_0$, if for any $\psi \in \mathcal{C}_c^\infty(\mathcal{X})$ and any $t>0$ we have
\begin{equation}\label{eq: FR_weak_form}
    \frac{d}{dt} \int_{\mathcal{X}} \psi(x) \, d\mu_t(x)
    = - \int_{\mathcal{X}} \psi(x) \, a(\mu_t, x) \, d\mu_t(x).
\end{equation}
\end{definition}

\begin{remark}\label{remark: cc infinity dense in Lp}
Note that by a standard approximation argument, if $(\nu_t)_{t \ge 0}$ is a weak solution to the lifted flow in the sense of Definition \ref{def: lifted_flow_weak}, then \eqref{eq: lifted_weak_form} holds also for functions of the form
\[
    \psi(x,w) = w \, \phi(x), \quad \phi \in \mathcal{C}_c^\infty(\mathcal{X}).
\]
This will be important for arguments where we switch between the lifted and projected flows, especially in the proof of the following lemma.
\end{remark}

\begin{lemma}\label{lemma: projected_lifted}
Let $(\nu_t)_{t \ge 0} \subset \mathcal{P}_1(\mathcal{X} \times \mathbb{R}_+)$ be a weak solution to the lifted flow \eqref{eq: lifted_flow} in the sense of Definition~\ref{def: lifted_flow_weak}, with initial condition $\nu_0 \in \mathcal{P}_1(\mathcal{X} \times \mathbb{R}_+)$.
Then the projected flow $(\mu_t)_{t \ge 0} := (\texttt{h} \nu_t)_{t \ge 0} \subset \mathcal{P}_1(\mathcal{X})$ is a weak solution to the Fisher--Rao flow \eqref{eq: FR_flow} in the sense of Definition~\ref{def: FR_flow_weak}, with initial condition $\mu_0 := \texttt{h} \nu_0$.
\end{lemma}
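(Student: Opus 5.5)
The plan is to test the lifted weak formulation \eqref{eq: lifted_weak_form} against functions of the form $\psi(x,w) = w\,\phi(x)$ with $\phi \in \mathcal{C}_c^\infty(\mathcal{X})$. Remark~\ref{remark: cc infinity dense in Lp} makes these admissible. Then we translate both sides into statements about $\mu_t = \texttt{h}\nu_t$ using the projection identity \eqref{eq: projection}.

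\textbf{Left-hand side.} Fix $\phi \in \mathcal{C}_c^\infty(\mathcal{X})$ and set $\psi(x,w) = w\phi(x)$. By Definition~\ref{def: lifted_projection},
\[
\int_{\mathcal{X}\times\mathbb{R}_+} w\,\phi(x)\,d\nu_t(x,w) = \int_{\mathcal{X}} \phi(x)\,d\mu_t(x),
\]
so the left-hand side of \eqref{eq: lifted_weak_form} becomes $\frac{d}{dt}\int_{\mathcal{X}} \phi\, d\mu_t$.

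\textbf{Right-hand side.} Here $\partial_w \psi(x,w) = \phi(x)$, so the right-hand side equals
\[
-\int_{\mathcal{X}\times\mathbb{R}_+} w\, a(\mu_t,x)\,\phi(x)\,d\nu_t(x,w).
\]
We want this to equal $-\int_{\mathcal{X}} a(\mu_t,x)\phi(x)\,d\mu_t(x)$, which is the right-hand side of \eqref{eq: FR_weak_form}.

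\textbf{Main obstacle.} The function $x\mapsto a(\mu_t,x)\phi(x)$ is only bounded, Lipschitz and compactly supported by Assumption~\ref{assumption1}; it is not in $\mathcal{C}_c^\infty$. So \eqref{eq: projection} must first be extended from $\mathcal{C}_c^\infty(\mathcal{X})$ to bounded measurable (or at least continuous compactly supported) test functions. I would argue as follows.

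1. Identity \eqref{eq: projection} says that the finite measure $\varphi \mapsto \int w\varphi(x)\,d\nu$ coincides with $\mu$ on $\mathcal{C}_c^\infty$. It is finite because $\nu\in\mathcal{P}_1$ bounds $\int w\,d\nu$.
2. By density of $\mathcal{C}_c^\infty$ in $\mathcal{C}_c$ under uniform convergence on a common compact, together with dominated convergence, the two measures agree on $\mathcal{C}_c(\mathcal{X})$.
3. By the Riesz representation theorem they are then the same Radon measure, so the identity holds for every bounded Borel function.

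Applying this to $a(\mu_t,\cdot)\phi$ gives the required equality. Equating the two sides yields \eqref{eq: FR_weak_form} for all $t>0$.

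\textbf{Remaining checks.}

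1. The initial condition $\mu_0 = \texttt{h}\nu_0$ holds by definition.
2. $\mu_t \in \mathcal{P}_1(\mathcal{X})$ is built into the codomain of $\texttt{h}$.
3. Mass conservation is consistent with \eqref{eq: mass preservation}, so the $\mu_t$ are indeed probability measures. The proof does not need this beyond the definition of $\texttt{h}$.
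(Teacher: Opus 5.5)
Your proof is correct and follows the same route as the paper. Both test the lifted weak form with $\psi(x,w)=w\,\phi(x)$ (admissible by Remark~\ref{remark: cc infinity dense in Lp}) and identify both sides through the projection. Your extension of \eqref{eq: projection} from $\mathcal{C}_c^\infty(\mathcal{X})$ to integrands such as $a(\mu_t,\cdot)\phi$ is a rigorous version of what the paper does informally, by writing $\mu_t(x)=\int_0^\infty w\,\nu_t(x,w)\,dw$ as a density and exchanging integrals. Note that this step does not need Assumption~\ref{assumption1}, which the lemma does not assume; it only needs $a(\mu_t,\cdot)\phi$ to be $\mu_t$-integrable, which the weak formulation already presupposes.
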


With this relation at hand, we can construct a mean-field particle dynamic whose law evolves according to the lifted flow. The precise statement is given in the following theorem.

\begin{theorem}\label{thm: particle_representation}
Let $\nu_0 \in \mathcal{P}_1(\mathcal{X} \times \mathbb{R}_+)$. 
Consider the mean-field system
\begin{equation}\label{eq: mean-field dynamic}
    \begin{split}
        &d X_t = 0, \\
        &d w_t = - w_t \, a\big( \texttt{h} \nu_t, X_t \big) \, dt, \\
        &(X_0, w_0) \sim \nu_0,
    \end{split}
\end{equation}
where $\nu_t := \operatorname{Law}(X_t, w_t)$ denotes the joint law of $(X_t, w_t)$. 
If system \eqref{eq: mean-field dynamic} admits a solution, then the curve $(\nu_t)_{t \ge 0}$ is a weak solution to the lifted Fisher--Rao flow \eqref{eq: lifted_flow} in the sense of Definition~\ref{def: lifted_flow_weak}, and hence $(\mu_t)_{t \ge 0} := (\texttt{h} \nu_t)_{t \ge 0}$ is a weak solution to the Fisher--Rao flow \eqref{eq: FR_flow} in the sense of Definition~\ref{def: FR_flow_weak}.
\end{theorem}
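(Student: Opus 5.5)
The argument is a direct chain rule computation, followed by an application of Lemma~\ref{lemma: projected_lifted}. Suppose $(X_t,w_t)_{t\ge 0}$ solves \eqref{eq: mean-field dynamic} with $\nu_t=\operatorname{Law}(X_t,w_t)$. Fix $\psi\in\mathcal{C}_c^\infty(\mathcal{X}\times\mathbb{R}_+)$. Since $dX_t=0$, we have $X_t=X_0$ for all $t$. For each realization, the map $t\mapsto w_t$ is absolutely continuous (in fact $C^1$ if $t\mapsto a(\texttt{h}\nu_t,x)$ is continuous), because it is given explicitly by
\[
w_t=w_0\exp\Big(-\int_0^t a(\texttt{h}\nu_s,X_0)\,ds\Big).
\]
In particular $w_t\ge 0$, so the process stays in $\mathcal{X}\times\mathbb{R}_+$. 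The pathwise chain rule then gives
\[
\psi(X_t,w_t)-\psi(X_0,w_0)=-\int_0^t \partial_w\psi(X_0,w_s)\,w_s\,a(\texttt{h}\nu_s,X_0)\,ds .
\]

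Next, take expectations and exchange the expectation with the time integral by Fubini. This is justified because the integrand is bounded: $\partial_w\psi$ has compact support, so $w\,\partial_w\psi(x,w)$ is bounded by a constant $C_\psi$, and by Assumption~\ref{assumption1} the integrand is bounded by $C_\psi M_a$. Rewriting expectations as integrals against $\nu_s$ yields
\[
\int\psi\,d\nu_t-\int\psi\,d\nu_0=-\int_0^t\int_{\mathcal{X}\times\mathbb{R}_+} w\,a(\texttt{h}\nu_s,x)\,\partial_w\psi(x,w)\,d\nu_s(x,w)\,ds .
\]
To obtain the differential form \eqref{eq: lifted_weak_form}, I will check that the inner integral is continuous in $s$ and then apply the fundamental theorem of calculus. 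The continuity comes from three facts:
\begin{itemize}
\item[(i)] $s\mapsto w_s$ is continuous pathwise;
\item[(ii)] $a$ is Lipschitz in the measure argument;
\item[(iii)] $s\mapsto\texttt{h}\nu_s$ is continuous. Indeed, $\texttt{h}\nu_s=\mathbb{E}[w_s\delta_{X_0}]$ is a reweighting whose weights $w_s/w_0=e^{-\int_0^s a}$ vary continuously and stay in $[e^{-M_a s},e^{M_a s}]$.
\end{itemize}
Dominated convergence then gives continuity of the inner integral. If only the integrated form can be obtained, the derivative still holds for a.e.\ $t$, which I would remark suffices.

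Two side conditions need checking. First, $\nu_t\in\mathcal{P}_1(\mathcal{X}\times\mathbb{R}_+)$ for all $t$, since $|X_t|=|X_0|$ and $w_t\le w_0e^{M_a t}$, so first moments are propagated from $\nu_0$. Second, $\texttt{h}\nu_t$ should be a probability measure so that $a(\texttt{h}\nu_t,\cdot)$ is defined. This follows by applying the same computation to $\psi(x,w)=w$, through the approximation argument of Remark~\ref{remark: cc infinity dense in Lp}, together with the mass preservation property \eqref{eq: mass preservation}. I will treat this as implicit in the phrase ``admits a solution''. With these in place, $(\nu_t)$ is a weak solution of \eqref{eq: lifted_flow}, and Lemma~\ref{lemma: projected_lifted} gives that $(\texttt{h}\nu_t)$ weakly solves \eqref{eq: FR_flow}.

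The main obstacle is the regularity step: passing from the integral identity to the pointwise-in-time derivative in \eqref{eq: lifted_weak_form}, which requires continuity of $s\mapsto\texttt{h}\nu_s$ in $\mathcal{W}_2$ or at least weakly. I would handle it with the explicit exponential formula for $w_t$ and the bound $|a|\le M_a$, as above.
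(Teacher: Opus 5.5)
Your proposal follows the paper's proof. Both apply the pathwise chain rule to $\psi(X_0,w_t)$, take expectations to get the integrated weak identity, and then invoke Lemma~\ref{lemma: projected_lifted} for the projected flow. Your extra justifications fill in steps the paper skips: Fubini via the bound on $w\,\partial_w\psi\, a$, continuity of $s\mapsto\texttt{h}\nu_s$ to pass to the pointwise time derivative, and the $\mathcal{P}_1$ and mass side conditions. You also stop at the form with $\partial_w\psi$ on the test function rather than doing the paper's final integration by parts, which Definition~\ref{def: lifted_flow_weak} does not require.
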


\begin{remark}\label{remark: WFR particle}
 System \eqref{eq: mean-field dynamic} can be interpreted as a particle $X_t$ with a corresponding weight $w_t$. Note that the spatial position \( X_t \) of the particle is sampled once from the initial probability distribution and remains fixed over time, while only the associated weight \( w_t \) evolves according to the dynamic in \eqref{eq: mean-field dynamic}. This reflects the geometry of the Fisher--Rao metric, which governs mass change without spatial transport.
\end{remark}

\begin{remark}\label{remark: Domingo}
    Consider the setting corresponding to our primary object of interest, i.e., when the function $a$ in \eqref{eq: FR with a} is given by the flat derivative of $V^{\sigma}$ defined in \eqref{eq: energy}. Then, for measures $\mu$ such that $V^{\sigma}$ has a flat derivative at $\mu$,
    \begin{equation}\label{eq: a without kernel}
        a(\mu, \cdot) = \frac{\delta F}{\delta \mu}(\mu,\cdot) + \sigma \log \frac{\mu}{\pi}
    \end{equation}
    (up to an additive constant - the details will be discussed in Section \ref{sec: verification}). If we assume the necessary differentiability, we can consider the Wasserstein-Fisher-Rao gradient flow given by
    \begin{equation}\label{eq: WFR}
          \partial_t \mu_t(x) = \nabla \cdot\left( \mu_t \nabla a(\mu_t, x)\right) - \mu_t(x) a(\mu_t, x).
    \end{equation}
    Then, due to the interpretation of the Wasserstein gradient flow as the mean-field Langevin SDE \citep{hu2021mean}, we would expect to obtain the following corresponding mean-field (single particle) system
    \begin{equation}\label{eq: WFRsystem}
    \begin{split}
        d X_t &= - \nabla \left(\frac{\delta F}{\delta \mu}(\texttt{h} \nu_t , X_t) - \sigma \log \pi(X_t)\right)dt \\
        &+ \sqrt{2\sigma}\, dW_t,\\
        d w_t &= -  w_t\, a( \texttt{h} \nu_t, X_t)\, dt,\\
        \nu_t &:= \text{Law}(X_t, w_t),
    \end{split}
\end{equation}
where both the location of the particle and the weight evolve in time. There are, however, several technical challenges with studying system \eqref{eq: WFRsystem}. If \eqref{eq: WFRsystem} is considered on a compact space $\mathcal{X} \subset \mathbb{R}^d$, this creates an issue with the Wasserstein part of the flow (due to the diffusive behaviour of $X_t$) and one needs to make sure that the particle stays on the right space. On the other hand, if $\mathcal{X} = \mathbb{R}^d$, then the Fisher-Rao part becomes problematic since the function $a$ defined in \eqref{eq: a without kernel} is unbounded (due to the flat derivative of the relative entropy being unbounded). While working on the present paper, we were unable to overcome these technical challenges which is why we focus on the system \eqref{eq: mean-field dynamic} corresponding to the "pure" Fisher-Rao gradient flow (and in Section \ref{sec: verification} when we discuss specific choices of $a$ related to \eqref{eq: a without kernel}, we work on a compact $\mathcal{X}$ to ensure that $a$ stays bounded). This is consistent with a recent paper \citep{Domingo-EnrichBruna}, which studied (in the context of two-player game theory) Wasserstein-Fisher-Rao flows corresponding to energy functions \eqref{eq: energy} without entropy-regularization (i.e., with $\sigma =0$), which leads to a system of the form \eqref{eq: WFRsystem} where $X_t$ moves only according to a deterministic gradient descent; as well as Wasserstein flows corresponding to \eqref{eq: energy} with entropy regularization but without the Fisher-Rao part. Similarly to \cite{Domingo-EnrichBruna}, we are unable to cover Wasserstein-Fisher-Rao (WFR) flows corresponding to  \eqref{eq: energy} with entropy-regularization, but unlike \cite{Domingo-EnrichBruna}, we study Fisher-Rao flows for \eqref{eq: energy} with entropy-regularization, which were not covered there. We will attempt to treat the challenging case of WFR flows in a future work.
\end{remark}

We now address the well-posedness of mean-field particle dynamics. 

\begin{theorem}\label{thm: exist_unique_solution}
Suppose that $a$ satisfies Assumption \ref{assumption1}. 
Let the initial law \(\nu_0 \in \mathcal{P}_2(\mathcal{X}\times\mathbb{R}_+)\). Then, for any \(T>0\), there exists a unique random dynamic
\((X_t,w_t)_{t\in[0,T]}\) that solves system \eqref{eq: mean-field dynamic}. In particular,
\[
\operatorname{Law}\big((X_t,w_t)_{t\in[0,T]}\big)\in \mathcal{P}_2\!\big(\mathcal{C}([0,T];\mathcal{X}\times\mathbb{R}_+)\big).
\]
Moreover, the solution is pathwise unique: if two solutions share the same initial condition almost surely, then they coincide for all \(t\in[0,T]\) almost surely.
\end{theorem}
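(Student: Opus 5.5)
Since $X_t\equiv X_0$, the system reduces to the weight equation $dw_t=-w_t\,a(\texttt{h}\nu_t,X_0)\,dt$. We plan a Picard/Banach fixed-point argument on path laws, in the spirit of Sznitman's classical proof for McKean--Vlasov SDEs. Fix $T>0$ and take $\Theta\in\mathcal{P}_2(\mathcal{C}([0,T];\mathcal{X}\times\mathbb{R}_+))$ with time marginals $\Theta_t$ and initial marginal $\nu_0$. We freeze the measure argument and solve
\[
X_t=X_0,\qquad w_t=w_0\exp\Big(-\int_0^t a(\texttt{h}\Theta_s,X_0)\,ds\Big).
\]
This is an explicit, well-defined continuous path, because $s\mapsto a(\texttt{h}\Theta_s,X_0)$ is bounded by $M_a$ and measurable in $s$; measurability can be obtained from Lipschitz continuity in the measure together with continuity of $s\mapsto\Theta_s$. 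Then $0\le w_t\le w_0e^{M_aT}$, so $\Phi(\Theta):=\operatorname{Law}((X_t,w_t)_{t\le T})$ lies in $\mathcal{P}_2$ whenever $\nu_0\in\mathcal{P}_2$. A fixed point of $\Phi$ is exactly a solution of \eqref{eq: mean-field dynamic}.

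\textbf{Contraction.} For $\Theta,\Theta'$ we couple the two frozen solutions through the same initial condition $(X_0,w_0)$. Using $|e^{-u}-e^{-v}|\le e^{\max(|u|,|v|)}|u-v|$ together with \eqref{eq: Lipschitz_a}, we obtain
\[
\sup_{s\le t}|w_s-w'_s|\le w_0 e^{M_aT}L_a\int_0^t \mathcal{W}_2(\texttt{h}\Theta_s,\texttt{h}\Theta'_s)\,ds .
\]
Squaring, taking expectations and using $\mathbb{E}w_0^2<\infty$ gives
\[
\mathcal{W}_{2,t}(\Phi\Theta,\Phi\Theta')^2\le C_T\int_0^t\mathcal{W}_2(\texttt{h}\Theta_s,\texttt{h}\Theta'_s)^2ds .
\]
The key remaining ingredient is a stability estimate for the projection, $\mathcal{W}_2(\texttt{h}\nu,\texttt{h}\nu')\le C\,\mathcal{W}_{2}(\nu,\nu')$, valid at least along the measures that actually arise. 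We expect this to be the main obstacle, because $\texttt{h}$ reweights mass by $w$ and is not Wasserstein-Lipschitz in general. Moreover, for general $\Theta$ the projection $\texttt{h}\Theta_s$ need not even be a probability measure unless $\mathbb{E}_{\Theta_s}[w]=1$.

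\textbf{Handling the obstacle.} We would exploit the structure of the problem. Every image $\Phi(\Theta)$ has $X_t=X_0$ and $w_t=w_0 g(X_0,t)$ with $e^{-M_aT}\le g\le e^{M_aT}$. We therefore restrict $\Phi$ to the closed set $S$ of path laws with $X$ constant, initial law $\nu_0$, and weights of this multiplicative form. On $S$ all laws share the same $X_0$-marginal, and the natural synchronous coupling gives
\[
\int\varphi\,d\texttt{h}\Theta_s-\int\varphi\,d\texttt{h}\Theta'_s=\mathbb{E}\big[w_0(g_s-g'_s)(X_0)\varphi(X_0)\big].
\]
Controlling $\mathcal{W}_2$ by a weighted total variation (on compact $\mathcal{X}$, or using second moments in general) bounds this by $C\,\mathbb{E}[w_0|g_s-g'_s|]$, and hence by $C\,\mathcal{W}_{2,s}(\Theta,\Theta')$ for the synchronous coupling metric. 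If this route is too rough, we would instead run the fixed point directly in the space of measurable functions $g:\mathcal{X}\times[0,T]\to[e^{-M_aT},e^{M_aT}]$ under the norm $\sup_t\mathbb{E}[w_0|g_t(X_0)|]$; this sidesteps $\mathcal{W}_2$ on paths entirely. The mass constraint $\mathbb{E}[w_0g_t(X_0)]=1$ is preserved along iterates because of \eqref{eq: mass preservation}, where we use the fact that $\mathbb{E}w_0=1$ is implicit in $\texttt{h}\nu_0\in\mathcal{P}(\mathcal{X})$.

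\textbf{Conclusion.} Iterating the resulting Gronwall-type inequality gives $d(\Phi^k\Theta,\Phi^k\Theta')\le (C_TT)^k/k!\,d(\Theta,\Theta')$. Hence some power of $\Phi$ is a contraction, which yields existence and uniqueness of the fixed point, and in particular of the law of the solution. Pathwise uniqueness then follows: two solutions with the same initial condition have the same law $\nu_t$ by uniqueness of the fixed point, so both satisfy the same linear random ODE with the same coefficient $a(\texttt{h}\nu_t,X_0)$. Its explicit solution is unique, so the two solutions coincide almost surely for all $t\in[0,T]$. The $\mathcal{P}_2$ path-space membership comes from the bound $w_t\le w_0e^{M_aT}$.
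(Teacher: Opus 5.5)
\textbf{There is a genuine gap, and it cannot be closed under the stated assumption.}

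You located the real difficulty correctly. The paper closes its Picard iteration with Proposition~\ref{prop: w_bound}(ii), namely $\mathcal{W}_2(\texttt{h}\nu_t,\texttt{h}\nu'_t)\le M\,\mathcal{W}_2(\nu_t,\nu'_t)$. The measure $w_tw'_t\,\pi_t$ used in that proof is not a coupling of $\texttt{h}\nu_t$ and $\texttt{h}\nu'_t$: its first marginal integrates $\varphi$ to $\int ww'\varphi(x)\,d\pi_t$, not to $\int w\varphi(x)\,d\nu_t$. So the paper's argument breaks exactly where you predicted.

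Your repair does not close the gap either. Total variation controls $\mathcal{W}_2^2$, not $\mathcal{W}_2$. On your set $S$ one only gets $\mathcal{W}_2^2(\texttt{h}\Theta_s,\texttt{h}\Theta'_s)\le\operatorname{diam}(\mathcal{X})^2\,\mathbb{E}[w_0|g_s-g'_s|(X_0)]$, and this is sharp. Indeed $\mu_p:=(1-p)\delta_0+p\delta_1$ satisfies $\mathcal{W}_2(\mu_p,\mu_q)=|p-q|^{1/2}$, while their total variation distance is $|p-q|$. Inserted into your estimate, with $d_t$ your synchronous distance, this yields $d_t^2\le C\int_0^t d_s\,ds$ instead of $d_t^2\le C\int_0^t d_s^2\,ds$, which does not iterate to a contraction. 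Your fallback fixed point in $g$-space fails for the same reason: $g\mapsto a(g\,\texttt{h}\nu_0,\cdot)$ is only $\tfrac12$-H\"older for that norm.

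Separately, \eqref{eq: mass preservation} does not make $\Phi$ preserve $\mathbb{E}[w_0g_t(X_0)]=1$. It gives $\int a(\texttt{h}\Theta_t,x)\,g_t(x)\,\texttt{h}\nu_0(dx)=0$ for the input weight $g$, whereas conserving the mass of $\Phi(\Theta)$ needs this identity with the output weight. So off fixed points $\texttt{h}\Phi(\Theta)_t$ need not be a probability measure, and $a$ must be normalized or extended to finite measures.

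No sharper argument can rescue the statement, because uniqueness fails under Assumption~\ref{assumption1} as written. Take $\mathcal{X}=[0,1]$, $\mu_0:=\mu_{1/2}$ and
\[
a(m,x):=\min\{\mathcal{W}_2(m,\mu_0),1\}\Big(\int y\,m(dy)-x\Big).
\]
This $a$ is bounded by $1$, satisfies \eqref{eq: Lipschitz_a} with $L_a=2$, and satisfies \eqref{eq: mass preservation}. Let $(X_0,w_0)=(X_0,1)$ with $X_0\sim\mu_0$, and let $p$ be the non-constant solution of $\dot p=p(1-p)\sqrt{p-1/2}$, $p(0)=1/2$. Then two different processes solve \eqref{eq: mean-field dynamic}:
\begin{itemize}
\item[(a)] $w_t\equiv1$, since $a(\mu_0,\cdot)\equiv0$;
\item[(b)] $w_t=2p(t)\mathbf{1}_{\{X_0=1\}}+2(1-p(t))\mathbf{1}_{\{X_0=0\}}$, along which $\texttt{h}\nu_t=\mu_{p(t)}$ and $\mathcal{W}_2(\mu_{p(t)},\mu_0)=\sqrt{p(t)-1/2}$.
\end{itemize}

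What your $g$-space scheme does prove, in a few lines, is well-posedness when \eqref{eq: Lipschitz_a} holds with $\mathcal{W}_1$ in place of $\mathcal{W}_2$. In that case $\mathcal{W}_1(g\,\texttt{h}\nu_0,g'\,\texttt{h}\nu_0)\le\|g-g'\|_\infty\int|x-x_0|\,\texttt{h}\nu_0(dx)$ for mass-one weights, and Gronwall in $\sup_x|g_t(x)-g'_t(x)|$ closes linearly. The kernelized entropy terms of Section~\ref{sec: verification} are in fact shown to be $\mathcal{W}_1$-Lipschitz in Appendix~\ref{sec: checking_conditions}. So only the Lipschitz condition on $\frac{\delta F}{\delta\mu}$ would need strengthening.
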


\subsection{Interacting Particle Systems and Propagation of Chaos}

We investigate the approximation of the Fisher--Rao gradient flow by interacting particle systems. 
Our objective is to establish propagation of chaos: as the number of particles \(N \to \infty\), the empirical measure of the interacting system converges to the law of i.i.d.\ copies of the mean-field solution, with convergence measured in the 2-Wasserstein distance.

Before we discuss the interacting particle system that is our main object of interest, in order to make the notation precise we first introduce a system \((X^{i,N}, w^{i,N}_t)\) of $N$ i.i.d. copies of the mean-field dynamic \eqref{eq: mean-field dynamic}, which will be used as an auxiliary tool in our approximation estimates.
 For each \(i \in \llbracket 1, N \rrbracket\), we define the dynamic of \((X^{i,N}, w^{i,N}_t)\) by
\begin{equation}\label{eq: systerm iid}
    \begin{split} 
        & X^{i,N} \sim \mu_0^{i,N}, \\
        & w_0^{i,N} \geq 0, \quad \text{with } \sum_{i=1}^N w^{i,N}_0 = N,\\
        & d w_t^{i,N} = - w_t^{i,N}  a(\texttt{h}\nu_t^i, X^{i,N}) \, dt,
    \end{split}
\end{equation}
where \(\nu_t^i := \operatorname{Law}(X^{i,N}, w_t^{i,N})\) is the marginal law of the \(i\)-th particle. We denote the empirical measure of thus obtained \emph{non-interacting} system by
\begin{equation}\label{eq: empirical_non_interacting}
\nu_t^N := \frac{1}{N} \sum_{i=1}^N \delta_{\left(X^{i,N}, w_t^{i,N}\right)}.
\end{equation}
We will use this notation throughout our proofs in Appendix \ref{sec: POC}.

\noindent\textbf{Interacting particle system.}  
We are now ready to introduce the interacting particle system, where the interaction is governed by weighted empirical distributions. For each \(i \in \llbracket 1, N \rrbracket\), we define \((\tilde{X}^{i,N}, \tilde{w}^{i,N}_t)\) by
\begin{equation}\label{eq: FR_interacting_system}
    \begin{split}
        & \tilde{X}^{i,N} \sim \tilde{\mu}_0^{i,N}, \\
        & \tilde{w}_0^{i,N} \geq 0, \quad \text{with } \sum_{i=1}^N \tilde{w}^{i,N}_0 = N,\\
        & d \tilde{w}_t^{i,N} = - \tilde{w}_t^{i,N} \, a\left(\tilde{\mu}_t^N, \tilde{X}^{i,N}\right) \, dt,
    \end{split}
\end{equation}
where \(\tilde{\mu}_t^N := \frac{1}{N} \sum_{i=1}^N \tilde{w}_t^{i,N} \delta_{\tilde{X}^{i,N}}\) is the weighted empirical distribution. Note that the empirical measure on the extended space is
\begin{equation}\label{eq: empirical_interacting}
\tilde{\nu}_t^N := \frac{1}{N} \sum_{i=1}^N \delta_{\left(\tilde{X}^{i,N}, \tilde{w}_t^{i,N}\right)}, \quad \text{so that} \quad \tilde{\mu}_t^N = \texttt{h} \tilde{\nu}_t^N.
\end{equation}

 We now state the main result on the propagation of chaos.

\begin{theorem}\label{thm: POC}
Suppose that the function $a$ satisfies Assumption \ref{assumption1}. Fix a finite time horizon $T>0$ and let $\nu \in \mathcal{P}_2\!\big(\mathcal{C}([0,T];\mathcal{X}\times\mathbb{R}_+)\big)$ be the law of the single particle $(X,w_t) = (X,w_t)_{t \in [0,T]}$ defined by the mean-field dynamic \eqref{eq: mean-field dynamic}. Choose particles \((X^{i,N}, w^{i,N}_t)\) as i.i.d.\ copies of $(X,w_t)$ as defined in \eqref{eq: systerm iid} and let \((\tilde{X}^{i,N}, \tilde{w}^{i,N}_t)\) be the interacting particle system defined by \eqref{eq: FR_interacting_system}. Suppose further that the initial conditions satisfy
\begin{equation} \label{hypo: initial_condition}
    \lim_{N\to \infty} \frac{1}{N} \sum_{i=1}^N\mathbb{E}\left[ \left|X^{i,N}-\tilde{X}^{i,N} \right|^2+ \left| w^i_0 -\tilde{w}^{i,N}_0 \right|^2 \right] = 0.
\end{equation}
Let $\tilde{\nu}^N \in \mathcal{P}_2\!\big(\mathcal{C}([0,T];\mathcal{X}\times\mathbb{R}_+)\big)$ be the empirical measure of the interacting particle system defined by \eqref{eq: empirical_interacting}. 
Then, 
\begin{equation*}
    \lim_{N\to \infty} \mathbb{E}\left[\mathcal{W}_{2,T}(\tilde{\nu}^N, \nu)\right] = 0.
\end{equation*}
\end{theorem}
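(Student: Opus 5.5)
The plan is the classical synchronous-coupling argument in the style of Sznitman. The triangle inequality gives
\[
\mathbb{E}\,\mathcal{W}_{2,T}(\tilde\nu^N,\nu)\le \mathbb{E}\,\mathcal{W}_{2,T}(\tilde\nu^N,\nu^N)+\mathbb{E}\,\mathcal{W}_{2,T}(\nu^N,\nu),
\]
where $\nu^N$ is the path-space empirical measure of the i.i.d.\ system \eqref{eq: systerm iid}, whose existence and uniqueness come from Theorem \ref{thm: exist_unique_solution}. The second term tends to zero by the law of large numbers for empirical measures in $\mathcal{W}_2$: $\nu\in\mathcal{P}_2(\mathcal{C}([0,T];\mathcal{X}\times\mathbb{R}_+))$ and the path space is Polish, so $\mathbb{E}\,\mathcal{W}_{2,T}^2(\nu^N,\nu)\to0$. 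This uses Varadarajan's theorem together with uniform integrability of the second moments.

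For the first term, the trivial coupling of the paths gives
\[
\mathcal{W}_{2,T}^2(\tilde\nu^N,\nu^N)\le \frac1N\sum_i\Big(|X^{i,N}-\tilde X^{i,N}|^2+\sup_{t\le T}|w^{i,N}_t-\tilde w^{i,N}_t|^2\Big).
\]
Positions do not move, so the position part is controlled directly by \eqref{hypo: initial_condition}. For the weights, Assumption \ref{assumption1} gives the explicit representation $w_t=w_0\exp(-\int_0^t a\,ds)$. Hence $w_t\le w_0e^{M_aT}$ and $\tilde w^{i,N}_t\le \tilde w^{i,N}_0e^{M_aT}$, so all weights stay in $L^2$. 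Writing $\Delta^i_t=w^{i,N}_t-\tilde w^{i,N}_t$,
\[
\Delta^i_t=\Delta^i_0-\int_0^t\Big(\Delta^i_s\,a(\texttt{h}\nu_s,X^{i})+\tilde w^{i}_s\big[a(\texttt{h}\nu_s,X^i)-a(\tilde\mu^N_s,\tilde X^i)\big]\Big)ds .
\]
The bracket is bounded by $L_a(|X^i-\tilde X^i|+\mathcal{W}_2(\texttt{h}\nu_s,\tilde\mu^N_s))$. We then split
\[
\mathcal{W}_2(\texttt{h}\nu_s,\tilde\mu^N_s)\le \mathcal{W}_2(\texttt{h}\nu_s,\texttt{h}\nu^N_s)+\mathcal{W}_2(\texttt{h}\nu^N_s,\texttt{h}\tilde\nu^N_s).
\]
One subtlety: $\texttt{h}\nu^N_s$ need not have unit mass. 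Mass conservation \eqref{eq: mass preservation} holds for the interacting system exactly, since $\sum_i\tilde w^i_t=N$ for all $t$, but only in expectation for the i.i.d.\ system. One therefore either normalizes or works with a bounded-Lipschitz/weighted distance dominating what is needed. Throughout I will treat it as a distance between weighted empirical measures.

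The key step, and the main obstacle, is a stability estimate for the projection $\texttt{h}$. I want to show
\[
\mathcal{W}_2^2(\texttt{h}\nu^N_s,\texttt{h}\tilde\nu^N_s)\lesssim \frac1N\sum_i\big(|X^i-\tilde X^i|^2+|\Delta^i_s|^2\big),
\]
with a constant depending on weight bounds. Weighted empirical measures with different weights on the same atoms are not naturally coupled in $\mathcal{W}_2$. Mass must be transported between atoms, and the cost is controlled only by $\sum_i|\Delta^i|$ times squared diameters. This means one needs either a bounded domain or moment control together with Cauchy--Schwarz, using the uniform bound $\tilde w\le \tilde w_0e^{M_aT}$. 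My first attempt would bound the relevant distance through duality with Lipschitz test functions, using that $a$ is effectively tested through such functions, and then square. If this fails in full generality, I would restrict to the weight moments available from $\nu_0\in\mathcal{P}_2$ and the boundedness of $a$.

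Similarly, the term $\mathbb{E}\,\mathcal{W}_2^2(\texttt{h}\nu_s,\texttt{h}\nu^N_s)$ should tend to zero uniformly in $s\in[0,T]$ by the law of large numbers applied to the weighted measures. Dominated convergence applies here since $w_s\le w_0e^{M_aT}$. After taking suprema and averaging over $i$, the remaining issue is the products $\tilde w^i_s|X^i-\tilde X^i|$. I would handle these by expectation bounds using the exponential weight bound and, if needed, truncation of large weights. Gronwall's inequality then gives
\[
\frac1N\sum_i\mathbb{E}\sup_{t\le T}|\Delta^i_t|^2\le C_T\big(\varepsilon^N_0+\sup_s\mathbb{E}\,\mathcal{W}^2_2(\texttt{h}\nu_s,\texttt{h}\nu^N_s)\big)\to0 .
\]
Here $\varepsilon^N_0$ is the quantity in \eqref{hypo: initial_condition}. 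Jensen's inequality then converts the squared bound into convergence of $\mathbb{E}\,\mathcal{W}_{2,T}$.
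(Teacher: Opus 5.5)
Your proposal has a genuine gap, exactly at the step you flag as the main obstacle. The bound $\mathcal{W}_2^2(\texttt{h}\nu^N_s,\texttt{h}\tilde\nu^N_s)\le C\frac{1}{N}\sum_i(|X^i-\tilde X^i|^2+|\Delta^i_s|^2)$ is never proved, and it is false. Take $N=2$ and the same two atoms $x_1,x_2$ with $|x_1-x_2|=D$ in both systems, with weights $(1+\delta,1-\delta)$ in one and $(1-\delta,1+\delta)$ in the other. The right-hand side is then $4C\delta^2$, but a mass $\delta$ must travel distance $D$, so the left-hand side is $\delta D^2$. Thus $\mathcal{W}_2$ is only $\frac12$-H\"older in the weights, and none of your fallbacks repairs this.
\begin{itemize}
\item Duality with Lipschitz test functions controls $\mathcal{W}_1$, or a bounded-Lipschitz distance, linearly in $\frac1N\sum_i|\Delta^i_s|$. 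But Assumption \ref{assumption1} only makes $a$ Lipschitz with respect to $\mathcal{W}_2$, which is weaker than Lipschitz with respect to $\mathcal{W}_1$. So such a bound cannot be fed into $a$.
\item The best bound available on a bounded domain has the form $\mathcal{W}_2\le CD(\frac1N\sum_i|\Delta^i_s|)^{1/2}+\dots$. This turns your Gronwall step into $y_t\le y_0+C\int_0^t(y_s+\sqrt{y_s}+\epsilon_s)\,ds$, where $\epsilon$ collects the initial and law-of-large-numbers errors. Since $z_t=(e^{Ct/2}-1)^2$ solves $z'=C(z+\sqrt z)$ with $z_0=0$, this inequality does not force $y\to0$.
\item Moment bounds or truncation of the weights cannot help, because the example above already has bounded weights.
\end{itemize}
You also rightly observe that $\texttt{h}\nu^N_s$ is not a probability measure, but you leave the normalization open. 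So your splitting of $\mathcal{W}_2(\texttt{h}\nu_s,\tilde\mu^N_s)$ is not yet well defined.

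The paper's proof is arranged to avoid that last issue. It never forms $\texttt{h}\nu^N_s$. Instead it bounds $\mathcal{W}_2^2(\texttt{h}\nu_s,\tilde\mu^N_s)\le e^{2M_aT}\mathcal{W}_2^2(\nu_s,\tilde\nu^N_s)\le e^{2M_aT}\mathcal{W}_{2,s}^2(\nu,\tilde\nu^N)$ using Propositions \ref{prop: w_bound}(ii) and \ref{prop: W-2t bound W-2}. It then applies the triangle inequality through $\nu^N$ on the lifted path space, where every measure is a probability measure, and runs Gronwall on $T\mapsto\mathbb{E}\,\mathcal{W}_{2,T}^2(\nu,\tilde\nu^N)$ before the law of large numbers. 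However, Proposition \ref{prop: w_bound}(ii) is exactly the estimate you could not prove, with the lifted distance on the right, and your two-atom example refutes it too: there the lifted $\mathcal{W}_2^2$ is at most $4\delta^2$ while the projected one is $\delta D^2$. Its proof fails because $ww'\pi_t$ is a coupling of $\texttt{h}\nu_t$ and $\texttt{h}\nu'_t$ only for special $\pi_t$, such as the product coupling. In general its marginals are $\int ww'g(x)\,d\pi_t\ne\int wg(x)\,d\nu_t$ and its mass is $\int ww'\,d\pi_t\ne1$, so the infimum over $\pi_t$ cannot be taken.

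In fact no argument can close under Assumption \ref{assumption1} alone. On $\mathcal{X}=\{0,1\}$ put $m_p=p\delta_0+(1-p)\delta_1$, so that $\mathcal{W}_2(m_p,m_q)=\sqrt{|p-q|}$. Let $\phi(p)=\mathrm{sgn}(p-\frac12)\sqrt{|p-\frac12|}$ times a smooth cutoff equal to $1$ near $\frac12$ and supported in $[\frac14,\frac34]$, and set $a(m_p,0)=-\phi(p)/p$, $a(m_p,1)=\phi(p)/(1-p)$. This $a$ satisfies Assumption \ref{assumption1} and \eqref{eq: mass preservation}, and the projected flow is $\dot p_t=\phi(p_t)$. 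From $p_0=\frac12$ this has the solutions $\frac12$ and $\frac12\pm\frac{t^2}{4}$, so uniqueness in Theorem \ref{thm: exist_unique_solution} fails as well. Now take $N$ odd, unit weights and fair-coin positions, and let $k$ be the number of particles at $0$. The interacting system follows the upper or lower branch according to the sign of $\frac{k}{N}-\frac12$, each with probability $\frac12$, so $\tilde\mu^N_t$ has no deterministic limit.

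What makes your synchronous coupling close is a Lipschitz condition with respect to a distance that is linear in weight perturbations. For example, require $|a(m,x)-a(m',y)|\le L(|x-y|+d_{BL}(m,m'))$ with $d_{BL}(m,m')=\sup\{\int f\,d(m-m'):\|f\|_\infty\le1,\ \mathrm{Lip}(f)\le1\}$. Then $d_{BL}(\texttt{h}\nu^N_s,\tilde\mu^N_s)\le\frac1N\sum_i(|\Delta^i_s|+\tilde w^i_s|X^i-\tilde X^i|)$ holds without any normalization, and your Gronwall argument goes through. The kernelized terms of Section \ref{sec: verification} satisfy such a bound, since they depend on $\mu$ through integrals of bounded Lipschitz functions. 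The term $\frac{\delta F}{\delta\mu}$ would need the same bound, replacing the $\mathcal{W}_2$-Lipschitz condition in Assumption \ref{hypo: F_full}.
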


We remark that condition \eqref{hypo: initial_condition} is automatically satisfied when the initializations for both systems considered in Theorem \ref{thm: POC} are identical.

\begin{corollary}\label{cor: POC}
    Under the assumptions of Theorem~\ref{thm: POC}, the projected empirical distribution \(\tilde{\mu}_t^N := \texttt{h} \tilde{\nu}_t^N\) converges in the 2-Wasserstein distance, uniformly on $[0,T]$, to the mean-field law \(\mu_t := \texttt{h} \nu_t\). That is, for any finite time horizon \(T > 0\), we have

\begin{equation*}
     \lim_{N\to \infty} \sup_{t\in[0,T]}\mathbb{E}[\mathcal{W}_2\left(\mu_t, \tilde{\mu}_t^N\right)] = 0.
\end{equation*}
\end{corollary}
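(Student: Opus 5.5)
The plan is to deduce Corollary~\ref{cor: POC} from Theorem~\ref{thm: POC}. The key tool is a Lipschitz-type estimate for the projection operator $\texttt{h}$ with respect to Wasserstein distances, applied to the time-$t$ marginals. We use the triangle inequality
\[
\mathcal{W}_2(\mu_t,\tilde{\mu}_t^N) \le \mathcal{W}_2(\texttt{h}\nu_t,\texttt{h}\nu_t^N) + \mathcal{W}_2(\texttt{h}\nu_t^N,\texttt{h}\tilde{\nu}_t^N),
\]
where $\nu^N_t$ is the non-interacting empirical measure \eqref{eq: empirical_non_interacting}. However, $\texttt{h}$ of a weighted empirical measure is a probability measure only if the weights sum to $N$. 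This holds for $\tilde\nu^N_t$, since mass preservation \eqref{eq: mass preservation} gives $\frac{d}{dt}\sum_i \tilde w^{i,N}_t = -N\int a(\tilde\mu^N_t,x)\tilde\mu^N_t(dx)=0$. It fails for $\nu^N_t$. It is therefore cleaner to compare $\mu_t$ and $\tilde\mu^N_t$ directly, without the intermediate measure, via a bound expressed in terms of the lifted measures.

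\textbf{Step 1 (projection stability).} For lifted measures $\nu,\nu'$ with $\texttt{h}\nu,\texttt{h}\nu'$ probability measures, take an optimal coupling $\gamma$ of $\nu,\nu'$. Since $\mathcal{X}$ may be unbounded, we work with the dual or an explicit transport plan. A natural plan is the "weighted" coupling $\int \min(w,w')\,\delta_{(x,x')}\,d\gamma$, completed by an arbitrary coupling of the leftover masses $(w-w')_+$ and $(w'-w)_+$, which have equal total mass. This yields
\[
\mathcal{W}_2^2(\texttt{h}\nu,\texttt{h}\nu') \lesssim \int (w\wedge w')|x-x'|^2\,d\gamma + \int|w-w'|\,(|x|^2+|x'|^2)\,d\gamma .
\]
If one prefers a cleaner statement, one can work with $\mathcal{W}_1$ and use Kantorovich–Rubinstein duality: for $1$-Lipschitz $\varphi$ with $\varphi(x_0)=0$,
\[
\int\varphi\,d(\texttt{h}\nu-\texttt{h}\nu') = \int (w\varphi(x)-w'\varphi(x'))\,d\gamma,
\]
which is bounded by $\int (w|x-x'| + |w-w'|\,|x'-x_0|)\,d\gamma$.

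\textbf{Step 2 (uniform moment and weight bounds).} By Assumption~\ref{assumption1}, every weight satisfies $w_t\le w_0e^{M_aT}$ and $w_t\ge w_0e^{-M_aT}$. This holds both for the mean-field particle and for each interacting particle, since their weight equations are linear with coefficients bounded by $M_a$. Positions never move. Hence all second moments of the $X$ and $w$ components are controlled uniformly on $[0,T]$ by those at time $0$. For the same reason, $\sup_t\mathcal{W}_2(\nu_t,\tilde\nu^N_t)\le \mathcal{W}_{2,T}(\nu,\tilde\nu^N)$, since evaluation at time $t$ is $1$-Lipschitz on paths.

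\textbf{Step 3 (conclusion).} Combining Step 1 with Cauchy–Schwarz gives
\[
\mathcal{W}_2(\mu_t,\tilde\mu^N_t) \le C\,\big(\mathcal{W}_{2,T}(\nu,\tilde\nu^N)\big)^{\alpha},
\]
with some $\alpha\in(0,1]$ and a constant $C$ depending on moments that are uniform in $t$ and $N$ by Step 2. Taking expectations, applying Jensen's inequality, and using Theorem~\ref{thm: POC} then yields the uniform-in-$t$ convergence.

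The main obstacle is Step 1. The product $w|x|^2$ requires a fourth-moment-type or uniform-integrability control once $\mathcal{X}$ is not compact. It also requires the weighted position moments of the empirical measure to be bounded in expectation, uniformly in $N$. I would first try to handle this by truncating at $|x|\le R$ and using uniform integrability of $w|x|^2$ under $\nu_0$ together with the initial convergence \eqref{hypo: initial_condition}. If that fails, I would restrict to the compact $\mathcal{X}$ used in Section~\ref{sec: verification}, where the estimate becomes immediate with $\alpha=1/2$.
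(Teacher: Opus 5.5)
\textbf{Comparison with the paper's route.} Your argument differs from the paper's, and it is the one that actually works. The paper's proof is two lines. Proposition~\ref{prop: w_bound}(ii) asserts $\mathcal{W}_2^2(\texttt{h}\nu_t,\texttt{h}\nu'_t)\le M^2\,\mathcal{W}_2^2(\nu_t,\nu'_t)$ when all weights lie in $[0,M]$. With $M=e^{M_aT}$ and Proposition~\ref{prop: W-2t bound W-2}, this gives $\sup_t\mathcal{W}_2^2(\mu_t,\tilde\mu^N_t)\le e^{2M_aT}\mathcal{W}_{2,T}^2(\nu,\tilde\nu^N)$ on any $\mathcal{X}$. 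That is your Step~3 with $\alpha=1$ and no moment issues, and Theorem~\ref{thm: POC} finishes.

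You should not try to recover that estimate, because it is false. Its proof treats $ww'\,\pi_t$, projected to $(x,x')$, as a coupling of $\texttt{h}\nu_t$ and $\texttt{h}\nu'_t$. But its first marginal is $\int ww'\,\pi_t(\cdot,dw,dx',dw')$, not $\int w\,\nu_t(\cdot,dw)$, and its total mass is generally not $1$. Concretely, take $\nu_t=\frac12\delta_{(0,1)}+\frac12\delta_{(D,1)}$ and $\nu'_t=\frac12\delta_{(0,1+\eta)}+\frac12\delta_{(D,1-\eta)}$; with $a\equiv0$ these are a mean-field law and an $N=2$ interacting system. Then $\mathcal{W}_2^2(\texttt{h}\nu_t,\texttt{h}\nu'_t)=\eta D^2/2$, while $\mathcal{W}_2^2(\nu_t,\nu'_t)\le\eta^2$, so the claimed inequality fails for $D=1$ and small $\eta$.

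Separately, the interacting weights need not be bounded by $e^{M_aT}$, since the initial weights are only required to sum to $N$. The same estimate is also used inside the proof of Theorem~\ref{thm: POC}, though for the corollary you may take that theorem as given. So the leftover-mass term in your Step~1 is unavoidable, and exponent $1/2$ is the true behaviour.

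\textbf{Compact $\mathcal{X}$.} Here your argument closes. Make the constant deterministic by bounding $w\wedge w'\le w$ with the mean-field weight. With $\gamma$ an optimal coupling of $\nu_t$ and $\tilde\nu^N_t$, Cauchy--Schwarz gives
\[
\int w|x-x'|^2\,d\gamma\le\operatorname{diam}(\mathcal{X})\,\|w_t\|_{L^2(\nu_t)}\,\mathcal{W}_2(\nu_t,\tilde\nu^N_t).
\]
The two leftover masses each equal $\frac12\int|w-w'|\,d\gamma\le\frac12\mathcal{W}_2(\nu_t,\tilde\nu^N_t)$, and transporting them costs at most $\operatorname{diam}(\mathcal{X})^2$ per unit mass. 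Hence, pathwise,
\[
\sup_{t\in[0,T]}\mathcal{W}_2^2(\mu_t,\tilde\mu^N_t)\le \operatorname{diam}(\mathcal{X})\Big(e^{M_aT}\|w_0\|_{L^2(\nu_0)}+\tfrac12\operatorname{diam}(\mathcal{X})\Big)\,\mathcal{W}_{2,T}(\nu,\tilde\nu^N).
\]
Jensen and Theorem~\ref{thm: POC} then give even $\mathbb{E}[\sup_t\mathcal{W}_2(\mu_t,\tilde\mu^N_t)]\to0$.

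\textbf{Unbounded $\mathcal{X}$.} The obstacle you flag is not an artefact of your method: the statement fails under the hypotheses of Theorem~\ref{thm: POC}. Take $\mathcal{X}=\mathbb{R}$ and $a\equiv0$, which satisfies Assumption~\ref{assumption1} and \eqref{eq: mass preservation}. Let $w_0\equiv1$ and $X_0\sim N(0,1)$. Set $\tilde X^{i,N}=X^{i,N}$ for $i\ge2$, and $\tilde X^{1,N}=\tilde w^{1,N}_0=N^{2/5}$. Give the remaining particles a common initial weight $c_N\to1$ so that the weights sum to $N$.

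Then \eqref{hypo: initial_condition} holds, with cost of order $N^{-1/5}$. All paths are constant and $\mathbb{E}[\mathcal{W}_{2,T}(\tilde\nu^N,\nu)]\to0$. But $\int x^2\,\tilde\mu^N_t(dx)\ge N^{1/5}$, so $\mathcal{W}_2(\mu_t,\tilde\mu^N_t)\ge N^{1/10}-1\to\infty$.

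Any truncation argument therefore needs an extra hypothesis, for example $\mathbb{E}[w_0|X_0|^2]<\infty$ together with uniform integrability in $N$ of $\frac1N\sum_i\tilde w^{i,N}_0|\tilde X^{i,N}|^2$. Such control cannot be extracted from \eqref{hypo: initial_condition}. Your fallback to compact $\mathcal{X}$, the setting of Section~\ref{sec: verification}, is the correct scope for the corollary.
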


We remark that Theorem \ref{thm: POC} and Corollary \ref{cor: POC} are non-quantitative, and obtaining convergence rates would require further work, which is beyond the scope of the present paper.

\subsection{Discussion of other propagation of chaos results in the literature}\label{subsec: other_poc}

The framework in \citet{LeCavil2017} establishes propagation of chaos results for particle approximations of the following class of non-conservative nonlinear PDEs:
\begin{equation}\label{eq: Cavil_PDE}
\begin{split}
&\partial_t v = \sum_{i,j=1}^d \partial^2_{ij}\left( (\Phi\Phi^\top)_{i,j}(t,x,v) v \right)\\
&- \nabla \cdot (g(t,x,v) v) + \Lambda(t,x,v) v,
\quad v(0, dx) = v_0(dx).
\end{split}
\end{equation}
In this equation, the first two terms correspond to the Wasserstein component of WFR flows, while the last term plays the role of a Fisher--Rao-type reaction. However, unlike the WFR flow \eqref{eq: WFR}, where both the transport and reaction components are derived from a single energy functional, here the different terms are specified independently and may correspond to unrelated energies.

More importantly, the reaction term \( \Lambda(t,x,v) v \) in \citet{LeCavil2017} depends locally on the solution \( v \): that is, \( \Lambda \) is evaluated using the pointwise value \( v(t,x) \), without reference to the global structure of the distribution. This contrasts with our setting, where, as we will explain in Section \ref{sec: verification}, the reaction term involves a nonlocal dependence on the entire probability measure. Hence, PDE \eqref{eq: Cavil_PDE} does not in general conserve total mass: the solution \( v(t,\cdot) \) is not necessarily a probability density for all \( t \). In contrast, the nonlocal normalization we include ensures mass preservation at all times. Moreover, the regularity assumptions also differ, cf.\ Assumption 1 in  \citet{LeCavil2017} to our Assumption \ref{assumption1}. 

Another key distinction is that we work on a compact state space \( \mathcal{X} \subset \mathbb{R}^d \), whereas the equations in \citet{LeCavil2017} are defined on the whole space \( \mathbb{R}^d \). Extending the results from \cite{LeCavil2017} to compact domains would require working with appropriately defined boundary conditions for the PDE \eqref{eq: Cavil_PDE}.

As we discussed in Remark \ref{remark: Domingo}, another related paper \citep{Domingo-EnrichBruna} studied propagation of chaos for Wasserstein-Fisher-Rao flows without entropy regularization, and for Wassesrstein flows corresponding to the entropy-regularized energy \eqref{eq: energy} (but without the Fisher-Rao part) and hence their results are not applicable to our setting.

Finally, there has been some work on the propagation of chaos for interacting particle systems defined with a killing-replication mechanism with exponential clocks (rather than with evolving weights - cf.\ also the discussion in Section \ref{sec: particle_system}) in \cite{vandenEijnden2019, LuLuNolen2019}. However, we were unable to make the proofs from those works fully rigorous in our setting \eqref{eq: energy}, due to the unboundedness of the flat derivative of the relative entropy.

\section{ENTROPIC MEAN-FIELD PROBLEMS}\label{sec: verification}

In this section, we assume $\mathcal{X}$ to be a compact subset of $\mathbb{R}^d$, and we focus on the energy function
\begin{equation}\label{eq: energy_function}
V^{\sigma}(m) = F(m) + \sigma \operatorname{KL}(m| \pi) \,, \text{ for all } m \in \mathcal{P}(\mathcal{X}).
\end{equation}

Note that the choice of a compact $\mathcal{X}$ removes the technical problem with flat differentiability of the KL-divergence, which on unbounded domains would have to be rigorously justified \cite{Liu-Majka-Szpruch-2023,AubinKorba2022}. Note also that we do not require convexity of $\mathcal{X}$ since the pure Fisher-Rao flow does not change the support of the initial measure, i.e., it evolves only the mass without any transport in the state space.

 The key difficulty in dealing with Fisher-Rao flows \eqref{eq: FR} corresponding to the energy function \eqref{eq: energy_function} lies in the construction of the corresponding interacting particle system. In our setting, one can show that the flat derivative of $V^{\sigma}$ is given by
\begin{equation}\label{eq: RHS_FR_flow}
\frac{\delta V^\sigma}{ \delta \mu} (\mu_t,x ) = \frac{\delta F}{\delta \mu}(\mu_t,x) + \sigma \log \frac{\mu_t(x)}{\pi(x)} - \sigma \operatorname{KL}(\mu_t|\pi).
\end{equation}
Note that the term $- \sigma \operatorname{KL}(\mu_t|\pi)$ is necessary to ensure that the equation \eqref{eq: FR} is conservative, i.e., that all measures $\mu_t$ are indeed probability measures.
However, this makes \eqref{eq: RHS_FR_flow}
well-defined only if the measures $\mu_t$ are absolutely continuous with respect to $\pi$, which  creates problems with defining a particle system approximating \eqref{eq: FR}. Indeed, one cannot evaluate the right hand side of \eqref{eq: FR} at the empirical measure of a corresponding particle system, which makes it necessary to define an auxiliary flow, where the problematic terms are replaced by their counterparts involving convolutions with appropriately defined kernels. This is the rationale behind the introduction of the so-called "kernelized" flows, where instead of the expression in \eqref{eq: RHS_FR_flow}, we work with its kernelized version.

\subsection{Setting for entropic mean-field optimization}\label{sec:preliminaries}

Throughout Section \ref{sec: verification}, we impose the following standing assumptions on the energy functional \(F\), the mollifier kernel \(K_\varepsilon\), and the reference measure \(\pi\).

\begin{assumption}\label{hypo: F_full}
We assume the energy functional \( F : \mathcal{P}(\mathcal{X}) \to \mathbb{R} \) satisfies the following properties:
\begin{enumerate}[label=(\roman*),ref=\theassumption(\roman*)]

     \item\label{hypo:F_full:iii} \( F \) is lower semi-continuous with respect to weak convergence in \(\mathcal{P}_2(\mathcal{X})\).

    \item\label{hypo:F_full:iv} \(F\) is bounded from below: there exists \(F_{\min}\in\mathbb{R}\) such that
    \[
    F(m)\ge F_{\min} \quad \text{for all } m\in \mathcal{P}(\mathcal{X}).
    \]
    
    \item\label{hypo:F_full:i} Flat derivative is bounded: there exists a constant \( C_F > 0 \) such that for all \( \mu \in \mathcal{P}(\mathcal{X}) \) and \( x \in \mathcal{X} \),
    \[
    \left| \frac{\delta F}{\delta \mu}(\mu,x) \right| \leq C_F.
    \]
    
    \item\label{hypo:F_full:ii} Flat derivative is Lipschitz: there exists a constant \( L_F > 0 \) such that for all \( \mu, \nu \in \mathcal{P}(\mathcal{X}) \) and \( x, y \in \mathcal{X} \),
    \[
    \left| \frac{\delta F}{\delta \mu}(\mu,x) - \frac{\delta F}{\delta \mu}(\nu,y) \right| \leq L_F \left( \mathcal{W}_2(\mu, \nu) + |x - y| \right).
    \]

\end{enumerate}
\end{assumption}

Note that these assumptions on $F$
 are standard in the mean-field optimization literature and, in particular, they are satisfied in mean-field models of neural networks studied in papers such as \cite{hu2021mean, ChenRenWang2023}, as well as in mean-field models in policy optimization in reinforcement learning \citep{leahy, LascuMajka2025}.

\begin{assumption}\label{hypo: mollifier_kernel}
Let \(\xi \colon \mathbb{R}^d \to \mathbb{R}_+\) be a smooth, Lipschitz (with constant \(L_\xi\)), radial probability density function with full support \(\mathbb{R}^d\) and finite second moment.  
For \(\varepsilon > 0\), define the rescaled function on the compact space $\mathcal{X}$ by
\[
    \xi_\varepsilon(x) := \frac{1}{C_{\varepsilon,d}}\,\varepsilon^{-d} \, \xi\!\left(\frac{x}{\varepsilon}\right),
\]
where \(C_{\varepsilon,d}\) is the normalization constant ensuring that
\[
    \int_{\mathcal{X}} \xi_\varepsilon(x) \, dx = 1.
\]
In particular, \(\xi_\varepsilon\) is a probability density function on \(\mathcal{X}\). The mollifier kernel \(K_\varepsilon\) on \(\mathcal{X}\) is then defined as
\[
    K_\varepsilon(x) := \big(\xi_\varepsilon * \xi_\varepsilon\big)(x).
\]
\end{assumption}

\begin{assumption}\label{hypo: pi_full}
We assume the reference density \( \pi(x) \) satisfies the following properties:
\begin{enumerate}[label=(\roman*),ref=\theassumption(\roman*)]
    \item\label{hypo:pi_full:i} \( \pi(x) = e^{-U(x)} \) for some continuous potential function \( U : \mathcal{X} \to \mathbb{R} \).
    
    \item\label{hypo:pi_full:iv} \( \pi \) is globally Lipschitz on \( \mathcal{X} \): there exists \( L_\pi>0 \) such that
    \[
    |\pi(x)-\pi(y)| \le L_\pi\,|x-y| \quad \text{for all } x,y\in\mathcal{X}.
    \]

\end{enumerate}
\end{assumption}

Note that under Assumption \ref{hypo: pi_full}, since $\pi \propto e^{-U}$ and $\mathcal{X}$ is compact, there exist constants \( 0 < \pi_{\min} < \pi_{\max} < \infty \) such that
    \begin{equation*}
    \pi_{\min} \leq \pi(x) \leq \pi_{\max}, \quad \text{ for all } x \in \mathcal{X}.
    \end{equation*}

\subsection{Kernelization Strategies}\label{sec: kernelization}
We summarize four kernelization strategies for approximating
Fisher--Rao 
flows corresponding to energy functions \eqref{eq: energy}, based on related strategies that have appeared in the literature in recent years. We will then show in Proposition \ref{proposition: kernelization} that for each of these approaches, the resulting function $a$ satisfies our Assumption \ref{assumption1} and therefore all our results from Section \ref{sec: main} are applicable. Recall our notation for the flow $\partial_t \mu_t = - \mu_t a(\mu_t, \cdot)$ and consider the following choices of $a$ that approximate the flat derivative $\frac{\delta V^{\sigma}}{\delta \mu}$ given in \eqref{eq: RHS_FR_flow}. 

\textbf{Smoothing only the evolving measure \citep{LuLuNolen2019}.}
Here the kernel is applied to $\mu_t$ both inside the logarithm and in the KL divergence. The resulting dynamics read

\begin{equation}\label{eq: kernel_1}
\begin{split}
\partial_t \mu_t
= -\,\mu_t \Biggl(&
\frac{\delta F}{\delta \mu}(\mu_t,\cdot)
+ \sigma\,\log\frac{K_\varepsilon * \mu_t}{\pi} \\
&- \sigma\,\operatorname{KL}(K_\varepsilon * \mu_t \mid \pi)
\Biggr).
\end{split}
\end{equation}

\textbf{Smoothing both the evolving and the target measures \citep{Pampel_2023}.}
In this variant, both $\mu_t$ and $\pi$ are mollified by $K_\varepsilon$, which yields

\begin{equation}\label{eq: kernel_2}
\begin{split}
\partial_t \mu_t
= -\,\mu_t \Biggl(&
\frac{\delta F}{\delta \mu}(\mu_t,\cdot)
+ \sigma\,\log\frac{K_\varepsilon * \mu_t}{K_\varepsilon * \pi} \\
&- \sigma\,\operatorname{KL}(K_\varepsilon * \mu_t \mid K_\varepsilon * \pi)
\Biggr).
\end{split}
\end{equation}

\textbf{Kernelizing the energy via \citet{Lu_2023}.}
Here the kernel is applied already at the level of the energy function, rather than just at the level of the flow; one studies the "true" Fisher--Rao gradient flow corresponding to the modified energy function
\[
V^{\sigma}_{\varepsilon}(m) = F(m) \;+\; \sigma\int \log\!\frac{K_\varepsilon * m}{\pi}\; m(dx).
\]
The induced dynamics takes the form

\begin{equation}\label{eq: kernel_3}
\begin{split}
&\partial_t \mu_t
= -\,\mu_t \sigma\Biggl(\frac{1}{\sigma}
\frac{\delta F}{\delta \mu}(\mu_t,\cdot)
+ \log\frac{K_\varepsilon * \mu_t}{\pi}\\
&+ K_\varepsilon *
   \Bigl(\frac{\mu_t}{K_\varepsilon * \mu_t}\Bigr)
   -\int \log\!\frac{K_\varepsilon * \mu_t}{\pi}\;\mu_t(dx) - 1
\Biggr).
\end{split}
\end{equation}

\textbf{Kernelizing the energy via \citet{Carrillo2019}.}
Another choice of a kernelized energy function replaces the KL term by $\operatorname{KL}(K_\varepsilon * m \mid \pi)$, leading to the energy
\[
V^{\sigma}_{\varepsilon}(m) = F(m) \;+\; \sigma\,\operatorname{KL}(K_\varepsilon * m \mid \pi),
\]
whose Fisher--Rao gradient flow is

\begin{equation}\label{eq: kernel_4}
\begin{split}
\partial_t \mu_t
= -\,\mu_t \Biggl(
&\frac{\delta F}{\delta \mu}(\mu_t,\cdot)
+ \sigma\,K_\varepsilon *
   \log\!\frac{K_\varepsilon * \mu_t}{\pi} \\
&- \sigma\,\operatorname{KL}(K_\varepsilon * \mu_t \mid \pi)
\Biggr).
\end{split}
\end{equation}

 Both \eqref{eq: kernel_3} and \eqref{eq: kernel_4} preserve the structure of Fisher–Rao flows, i.e, they are genuine Fisher--Rao gradient flows of the corresponding kernelized energy functions. A notable drawback of \eqref{eq: kernel_4}, however, is the presence of nested convolutions and integrals—e.g., the term $K_\varepsilon * \log(K_\varepsilon * \mu_t/\pi)$—which prevents a full discretization into finite particle sums even when
$\mu_t$ is an empirical measure, which in practice entails higher computational cost (see the discussion in \cite[Section 1]{Carrillo2019}). 

For all kernelizations defined above, we have the following result.

\begin{proposition}\label{proposition: kernelization}
    Suppose Assumptions \ref{hypo: F_full}, \ref{hypo: mollifier_kernel} and \ref{hypo: pi_full} hold. Consider the gradient flow $\partial_t \mu_t = - \mu_t a(\mu_t, \cdot)$ defined via \eqref{eq: kernel_1}, \eqref{eq: kernel_2}, \eqref{eq: kernel_3} or \eqref{eq: kernel_4}. Then $a$ satisfies Assumption \ref{assumption1}.
\end{proposition}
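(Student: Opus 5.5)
The plan is to verify boundedness and Lipschitz continuity term by term, using the compactness of $\mathcal{X}$ and the structure $K_\varepsilon = \xi_\varepsilon * \xi_\varepsilon$. The flat derivative part $\frac{\delta F}{\delta\mu}$ is handled directly by Assumptions \ref{hypo:F_full:i} and \ref{hypo:F_full:ii}, with constants $C_F$ and $L_F$. Everything therefore reduces to uniform (in $\mu\in\mathcal{P}(\mathcal{X})$, $x\in\mathcal{X}$) bounds and Lipschitz estimates for the entropic pieces: $\log(K_\varepsilon*\mu)$, $\log\pi$, $\log(K_\varepsilon*\pi)$, $K_\varepsilon*(\mu/K_\varepsilon*\mu)$, $K_\varepsilon*\log(K_\varepsilon*\mu/\pi)$, and the scalar terms $\operatorname{KL}(K_\varepsilon*\mu|\pi)$, $\operatorname{KL}(K_\varepsilon*\mu|K_\varepsilon*\pi)$, $\int\log(K_\varepsilon*\mu/\pi)\,d\mu$. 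Since sums and differences of bounded Lipschitz functions are bounded Lipschitz, it suffices to treat each piece once and then assemble the four choices of $a$.

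The first key step is two-sided bounds on the kernel. Since $\xi$ is continuous and positive with full support, and $\mathcal{X}$ is compact, $\operatorname{diam}(\mathcal{X})<\infty$. Hence for $x,y\in\mathcal{X}$ we have $0<k_{\min}\le K_\varepsilon(x-y)\le k_{\max}<\infty$, where $K_\varepsilon$ is evaluated on the bounded set $\mathcal{X}-\mathcal{X}$. I would interpret the convolution as $\int_{\mathcal{X}} K_\varepsilon(x-y)\,\mu(dy)$. Consequently $k_{\min}\le K_\varepsilon*\mu(x)\le k_{\max}$ for every probability measure $\mu$, and similarly $K_\varepsilon*\pi$ is bounded above and away from zero. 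Combined with $\pi_{\min}\le\pi\le\pi_{\max}$, all logarithms are uniformly bounded. The ratio $\mu/K_\varepsilon*\mu$ is a positive measure of mass at most $1/k_{\min}$, so its convolution is at most $k_{\max}/k_{\min}$. The KL-type scalars are integrals of bounded functions against probability measures (or against $K_\varepsilon*\mu$, which has density at most $k_{\max}$ on compact $\mathcal{X}$), so they are bounded. This gives the constant $M_a$ of \eqref{eq: boundness_a}.

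The second step, which I expect to be the main obstacle, is Lipschitz continuity in the measure with respect to $\mathcal{W}_2$. Since $\xi$ is Lipschitz, $K_\varepsilon$ is Lipschitz with some constant $L_K$ (Young's inequality, up to the normalization $C_{\varepsilon,d}$). Then
\[
|K_\varepsilon*\mu(x)-K_\varepsilon*\mu'(y)|\le L_K|x-y|+L_K\mathcal{W}_1(\mu,\mu')\le L_K\bigl(|x-y|+\mathcal{W}_2(\mu,\mu')\bigr),
\]
using Kantorovich--Rubinstein duality and $\mathcal{W}_1\le\mathcal{W}_2$. Because $\log$ is $1/k_{\min}$-Lipschitz on $[k_{\min},\infty)$, the term $\log K_\varepsilon*\mu$ is Lipschitz. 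The term $\log\pi$ is Lipschitz because $\pi$ is Lipschitz (Assumption \ref{hypo:pi_full:iv}) and bounded below by $\pi_{\min}$, and $\log K_\varepsilon*\pi$ is Lipschitz by the same argument.

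The delicate terms are those integrated against $\mu$ itself. For $\int\log(K_\varepsilon*\mu/\pi)\,d\mu$, I would split
\[
\int g_\mu\,d\mu-\int g_{\mu'}\,d\mu'=\int(g_\mu-g_{\mu'})\,d\mu+\Bigl(\int g_{\mu'}\,d\mu-\int g_{\mu'}\,d\mu'\Bigr).
\]
The first piece is bounded using the sup-norm Lipschitz estimate above. The second is bounded by the Lipschitz constant of $g_{\mu'}$ times $\mathcal{W}_1(\mu,\mu')$, by duality. The same splitting handles $K_\varepsilon*(\mu/K_\varepsilon*\mu)(x)=\int K_\varepsilon(x-y)/K_\varepsilon*\mu(y)\,\mu(dy)$: the integrand is jointly Lipschitz in $(x,y)$ and Lipschitz in $\mu$ in sup norm, with constants controlled by $k_{\min},k_{\max},L_K$. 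For the KL terms, $\operatorname{KL}(K_\varepsilon*\mu|\pi)=\int_{\mathcal{X}} (K_\varepsilon*\mu)\log\frac{K_\varepsilon*\mu}{\pi}\,dx$, and $h(u)=u\log u$ is Lipschitz on $[k_{\min},k_{\max}]$, so the difference is at most $|\mathcal{X}|$ times a sup-norm Lipschitz bound. The same reasoning covers the $K_\varepsilon*\pi$ version and $K_\varepsilon*\log(K_\varepsilon*\mu/\pi)$, since convolving a bounded Lipschitz function with $K_\varepsilon$ preserves both properties.

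Finally, I would assemble the four expressions \eqref{eq: kernel_1}--\eqref{eq: kernel_4} as linear combinations, with coefficients $1$ and $\sigma$, of these pieces and constants, yielding explicit $M_a,L_a$ depending on $\varepsilon,\sigma,C_F,L_F,\pi_{\min},\pi_{\max},L_\pi,\xi$ and $\mathcal{X}$.

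The main care point is the normalization $C_{\varepsilon,d}$ and restricting $K_\varepsilon$ to $\mathcal{X}$, so that $K_\varepsilon*\mu$ is understood consistently as a function on $\mathcal{X}$. I would fix this convention at the start and note that positivity and Lipschitz bounds are unaffected by it.
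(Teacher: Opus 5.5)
Your proposal is correct and follows essentially the same route as the paper. Both arguments rest on two-sided bounds and a Lipschitz constant for $K_\varepsilon$ on the compact set, Kantorovich--Rubinstein duality with $\mathcal{W}_1\le\mathcal{W}_2$, the $1/K_{\min,\varepsilon}$-Lipschitz property of $\log$, and the same add-and-subtract splitting for terms integrated against $\mu$ (including splitting $K_\varepsilon*(\mu/K_\varepsilon*\mu)$ into a duality part and a sup-norm part).

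The only minor deviation is in the KL terms. You use that $u\mapsto u\log u$ is Lipschitz on $[k_{\min},k_{\max}]$ together with the finite volume of $\mathcal{X}$. The paper instead splits the difference and moves the convolution onto the logarithm to apply duality. Your version has the small advantage of not needing $K_\varepsilon*\mu$ to have unit mass on the compact domain.
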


The proof can be found in Appendix \ref{sec: checking_conditions}.

\subsection{Regularization by $\chi^2$-divergence}

Even though the relative entropy is the most popular choice of regularizer in mean-field optimization problems, other choices are possible, and our framework is indeed applicable to energy functions more general than \eqref{eq: energy}. To illustrate this, we discuss briefly energy functions regularized by the $\chi^2$-divergence (with an appropriate kernelization analogous to kernelization \eqref{eq: kernel_4} for the entropy-regularized energy). Recall that given a fixed reference measure $\pi \in \mathcal{P}(\mathcal{X})$, for any measure $\mu \in \mathcal{P}(\mathcal{X})$ absolutely continuous with respect to $\pi$, the $\chi^2$-divergence is defined by $\chi^2(m|\pi) = \int (\frac{dm}{d\pi} - 1)^2 d \pi$. We consider the functional $V^{\sigma}_{\varepsilon}(m) := F(m)+\sigma \chi^2(K_\varepsilon*m\,|\,\pi)$.
Its Fisher--Rao gradient flow is $\partial_t \mu_t
    = -\mu_t a(\mu_t,\cdot)$, where
\begin{equation}\label{def:achisq}
\begin{split}
    a(m,x) &:= \frac{\delta F}{\delta \mu}(m,x)
    + 2\sigma K_\varepsilon\!\ast\!\left(\frac{K_\varepsilon\!\ast\!m}{\pi}\right)(x) \\
    &- 2\sigma \int K_\varepsilon\!\ast\!\left(\frac{K_\varepsilon\!\ast\!m}{\pi}\right)(z)\,m(z)\,dz .
    \end{split}
\end{equation}
We have the following result.
\begin{proposition}
Suppose Assumptions \ref{hypo: F_full}, \ref{hypo: mollifier_kernel} and \ref{hypo: pi_full} hold. Then  the function $a$ defined by \eqref{def:achisq} satisfies Assumption \ref{assumption1}. 
\end{proposition}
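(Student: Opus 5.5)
We verify the two parts of Assumption \ref{assumption1} term by term. Write $a = \frac{\delta F}{\delta \mu} + 2\sigma G_m(x) - 2\sigma \int G_m\,dm$, where
\[
G_m(x) := K_\varepsilon * \Bigl(\tfrac{K_\varepsilon * m}{\pi}\Bigr)(x) = \int_{\mathcal{X}} K_\varepsilon(x-y)\,\frac{(K_\varepsilon * m)(y)}{\pi(y)}\,dy .
\]
The $\frac{\delta F}{\delta\mu}$ part is handled directly by Assumptions \ref{hypo:F_full:i} and \ref{hypo:F_full:ii}, with $M_a \ni C_F$ and $L_a \ni L_F$. Everything therefore reduces to estimates on $G_m$.

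\textbf{Step 1: regularity of $K_\varepsilon$.} Since $\xi$ is bounded, smooth and Lipschitz with constant $L_\xi$, the rescaled $\xi_\varepsilon$ is bounded by $\|\xi\|_\infty/(C_{\varepsilon,d}\varepsilon^d)$. It is also Lipschitz with constant $L_\xi/(C_{\varepsilon,d}\varepsilon^{d+1})$; here $C_{\varepsilon,d}>0$ because $\xi$ has full support and $\mathcal{X}$ has positive measure, which is implicit in the normalization. Because $\xi_\varepsilon$ is a probability density on $\mathcal X$, the convolution $K_\varepsilon=\xi_\varepsilon*\xi_\varepsilon$ inherits both the sup bound $\|K_\varepsilon\|_\infty\le\|\xi_\varepsilon\|_\infty$ and the Lipschitz constant $L_K\le \mathrm{Lip}(\xi_\varepsilon)$. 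Consequently, for every $m\in\mathcal P(\mathcal X)$:
\begin{itemize}
\item $0\le K_\varepsilon*m\le\|K_\varepsilon\|_\infty$;
\item $y\mapsto (K_\varepsilon*m)(y)$ is $L_K$-Lipschitz;
\item $|(K_\varepsilon*m)(y)-(K_\varepsilon*m')(y)|\le L_K\,\mathcal W_1(m,m')\le L_K\,\mathcal W_2(m,m')$, by Kantorovich–Rubinstein duality applied to the Lipschitz function $z\mapsto K_\varepsilon(y-z)$.
\end{itemize}

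\textbf{Step 2: bounds on $G_m$.} Using $\pi\ge\pi_{\min}$ and the compactness of $\mathcal X$:
\begin{itemize}
\item $0\le G_m(x)\le \|K_\varepsilon\|_\infty^2|\mathcal X|/\pi_{\min}=:C_G$.
\item $|G_m(x)-G_m(x')|\le L_K|x-x'|\,|\mathcal X|\,\|K_\varepsilon\|_\infty/\pi_{\min}$, since $G_m$ is the convolution of the Lipschitz function $K_\varepsilon$ with a bounded function.
\item $|G_m(x)-G_{m'}(x)|\le \|K_\varepsilon\|_\infty|\mathcal X| L_K\mathcal W_2(m,m')/\pi_{\min}$, by Step 1.
\end{itemize}
Note that $\pi$ enters only through $\pi_{\min}$; its Lipschitz property is not even needed.

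\textbf{Step 3: the nonlocal term.} Set $\int G_m\,dm$. Boundedness gives $|\int G_m\,dm|\le C_G$, so
\[
M_a = C_F + 4\sigma C_G .
\]
For the Lipschitz estimate in $m$, split
\[
\int G_m\,dm-\int G_{m'}\,dm' = \int (G_m-G_{m'})\,dm + \Bigl(\int G_{m'}\,dm-\int G_{m'}\,dm'\Bigr).
\]
The first piece is bounded by the uniform estimate of Step 2. The second is at most $\mathrm{Lip}(G_{m'})\,\mathcal W_1(m,m')\le \mathrm{Lip}(G_{m'})\,\mathcal W_2(m,m')$, again by Kantorovich–Rubinstein duality, since $\mathrm{Lip}(G_{m'})$ is uniform in $m'$. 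Combining all pieces gives
\[
|a(m,x)-a(m',y)|\le L_a\bigl(|x-y|+\mathcal W_2(m,m')\bigr),
\]
with explicit $L_a$ depending on $L_F,\sigma,\varepsilon,\pi_{\min},|\mathcal X|,L_\xi,\|\xi\|_\infty$.

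\textbf{Main obstacle.} The only delicate point is the uniform-in-$m'$ Lipschitz bound on $G_{m'}$, which the duality argument in Step 3 requires; Step 2 supplies it. Mild care is also needed at the boundary of $\mathcal X$: interpret the convolutions as integrals over $\mathcal X$ with $K_\varepsilon$ evaluated on $\mathcal X-\mathcal X$, where it remains bounded and Lipschitz. Unlike the logarithmic kernelizations, there is no need for a positive lower bound on $K_\varepsilon*m$, so this case is simpler than Proposition \ref{proposition: kernelization}.
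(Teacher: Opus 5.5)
Your proof is correct and follows essentially the same route as the paper. You bound $G_m$ and control it separately in space and in the measure variable, using Kantorovich--Rubinstein duality for the latter, and you split the nonlocal term as $\int (G_m-G_{m'})\,dm+\int G_{m'}\,d(m-m')$; the paper's $\phi_x$ and $(iii)$ decompositions do the same. The only difference is in the constants: you use $|\mathcal X|\,\|K_\varepsilon\|_\infty$ where the paper uses $\int K_\varepsilon(x-z)\,dz=1$, which gives cruder constants but avoids relying on that normalization claim for $K_\varepsilon$ on a compact domain.
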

The proof can be found in Appendix \ref{subsec: chisq}.

\subsection{Weak-* Convergence of Minimizers under Kernelization}

In the context of mean-field optimization problems \eqref{eq: energy}, an important question related to kernelizations of the corresponding gradient flows is whether those kernelized gradient flows are associated with energy functions whose minimizers are close to the minimizers of the original energy function in \eqref{eq: energy}. In this subsection, we partially answer this question for the kernelized Fisher--Rao gradient flow associated with the kernelization in \eqref{eq: kernel_3}, namely the case where the flow corresponds to the energy function
\[
    V^{\sigma}_{\varepsilon}(m) := F(m) + \sigma \int_{\mathcal{X}} \log\!\frac{K_\varepsilon * m}{\pi}\; dm(x).
\]
We study the limiting behaviour of minimizers as $\varepsilon \to 0$, and prove that any sequence of minimizers of $V^{\sigma}_{\varepsilon}$ admits a subsequence converging in the weak-* topology to a minimizer of the original problem \eqref{eq: energy}.

\begin{theorem}[Weak-* convergence of minimizers]\label{thm: weak_* cv of minimizers}
  Suppose Assumptions \ref{hypo: F_full}, \ref{hypo: mollifier_kernel} and \ref{hypo: pi_full} hold. 
For each $\varepsilon>0$, let $\mu_\varepsilon \in \mathcal{P}_2(\mathcal{X})$ be a minimizer of \(V^{\sigma}_{\varepsilon}\).  
Then there exists a subsequence \((\varepsilon_k)_{k \ge 1}\) such that $\varepsilon_k \to 0$ as $k \to \infty$, and a measure \(\mu \in \mathcal{P}_2(\mathcal{X})\) such that
\[
    \mu_{\varepsilon_k} \stackrel{*}{\rightharpoonup} \mu \quad \text{as } k \to \infty,
\]
and \(\mu\) is a minimizer of \(V^{\sigma}\) given in \eqref{eq: energy}.
\end{theorem}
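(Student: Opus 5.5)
This is a $\Gamma$-convergence argument. We prove compactness of the minimizers, a liminf inequality along the converging subsequence, and a recovery (limsup) inequality for competitors. Together these show the limit is a minimizer of $V^\sigma$.

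\textbf{Step 1 (compactness).} Since $\mathcal{X}$ is compact, $\mathcal{P}(\mathcal{X})=\mathcal{P}_2(\mathcal{X})$ is weakly-* compact by Prokhorov, and weak-* convergence coincides with $\mathcal{W}_2$ convergence. Any sequence $\varepsilon_k\to 0$ therefore has a subsequence with $\mu_{\varepsilon_k}\stackrel{*}{\rightharpoonup}\mu\in\mathcal{P}_2(\mathcal{X})$.

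\textbf{Step 2 (liminf inequality).} We aim to show $V^\sigma(\mu)\le\liminf_k V^\sigma_{\varepsilon_k}(\mu_{\varepsilon_k})$. The $F$ part is handled by Assumption \ref{hypo:F_full:iii}. For the entropic part, write $\int \log\frac{K_\varepsilon*m}{\pi}\,dm = \int\log(K_\varepsilon*m)\,dm-\int\log\pi\,dm$. The second term is continuous in $m$ because $U$ is continuous on compact $\mathcal{X}$. For the first term, use $K_\varepsilon=\xi_\varepsilon*\xi_\varepsilon$ and Jensen's inequality (concavity of $\log$). Writing $\rho_\varepsilon:=\xi_\varepsilon*m$, one expects
\[
\int\log(K_\varepsilon*m)\,dm \ge \int \rho_\varepsilon\log\rho_\varepsilon\,dx,
\]
by the relation $\int \log(\xi_\varepsilon*\rho_\varepsilon)\,dm$ compared with $\int\rho_\varepsilon\log\rho_\varepsilon$ via Gibbs' inequality applied to kernel-averaged densities. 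The symmetric structure $\xi_\varepsilon*\xi_\varepsilon$ is exactly what makes a "cross-entropy $\ge$ entropy" inequality available, in the spirit of \citet{Carrillo2019}. Verifying this inequality carefully, including the boundary renormalization $C_{\varepsilon,d}$, is the main obstacle. If the direct Jensen route fails, I would fall back on the Donsker--Varadhan variational formula for the entropy tested against $\log(K_\varepsilon*m)$.

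Given that inequality, along the subsequence we have $\rho_{\varepsilon_k}=\xi_{\varepsilon_k}*\mu_{\varepsilon_k}\stackrel{*}{\rightharpoonup}\mu$, since $\xi_\varepsilon$ is an approximate identity on $\mathcal{X}$ with finite second moment. The functional $\int\rho\log\rho - \int\rho\log\pi = \operatorname{KL}(\rho|\pi)$ is weakly lower semicontinuous. Hence $\operatorname{KL}(\mu|\pi)\le\liminf_k \int\log\frac{K_{\varepsilon_k}*\mu_{\varepsilon_k}}{\pi}\,d\mu_{\varepsilon_k}$, up to vanishing error terms coming from replacing $\int\log\pi\,d\mu_{\varepsilon_k}$ by $\int\log\pi\,d\rho_{\varepsilon_k}$. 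These errors vanish by uniform continuity of $\log\pi$.

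\textbf{Step 3 (recovery sequence).} We aim to show that for every $\nu$ with $\operatorname{KL}(\nu|\pi)<\infty$, $\limsup_{\varepsilon\to0}V^\sigma_\varepsilon(\nu)\le V^\sigma(\nu)$, using the constant sequence. First take $\nu$ with a continuous density bounded away from zero on $\mathcal{X}$. Then $K_\varepsilon*\nu\to\nu$ uniformly on $\mathcal{X}$ and stays bounded below, so dominated convergence gives $\int\log\frac{K_\varepsilon*\nu}{\pi}\,d\nu\to\operatorname{KL}(\nu|\pi)$. General $\nu$ is handled by a diagonal argument: approximate $\nu$ by such regular densities $\nu_n$ with $\operatorname{KL}(\nu_n|\pi)\to\operatorname{KL}(\nu|\pi)$ (e.g.\ mixtures $(1-\delta)\nu*\xi_\eta+\delta\pi$) and $\mathcal{W}_2(\nu_n,\nu)\to0$. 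Along these approximations $F(\nu_n)\to F(\nu)$, since $F$ is $\mathcal{W}_2$-Lipschitz by the bounded flat derivative (Assumption \ref{hypo:F_full:i}--\ref{hypo:F_full:ii}).

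\textbf{Step 4 (conclusion).} By minimality, $V^\sigma_{\varepsilon_k}(\mu_{\varepsilon_k})\le V^\sigma_{\varepsilon_k}(\nu_n)$. Combining Steps 2 and 3,
\[
V^\sigma(\mu)\le\liminf_k V^\sigma_{\varepsilon_k}(\mu_{\varepsilon_k})\le\limsup_k V^\sigma_{\varepsilon_k}(\nu_n)\le V^\sigma(\nu_n).
\]
Letting $n\to\infty$ gives $V^\sigma(\mu)\le V^\sigma(\nu)$ for all $\nu$, so $\mu$ is a minimizer of $V^\sigma$.
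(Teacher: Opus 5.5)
Your compactness and liminf steps match the paper. Jensen with $K_\varepsilon=\xi_\varepsilon*\xi_\varepsilon$ reduces $\int\log(K_\varepsilon*m)\,dm$ to $\int\rho_\varepsilon\log\rho_\varepsilon\,dx$, and then $\xi_{\varepsilon}*\mu_{\varepsilon}\stackrel{*}{\rightharpoonup}\mu$ together with lower semicontinuity of $F$ and $\operatorname{KL}$ finishes it. Your bookkeeping of the $\log\pi$ term is actually more careful than the paper's, which silently swaps $\int\log\pi\,d\mu_\varepsilon$ for $\int\log\pi\,d(\xi_\varepsilon*\mu_\varepsilon)$.

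The inequality you call the main obstacle is immediate by the Jensen route you mention. Since $\xi_\varepsilon(y)\,dy$ is a probability measure, $\log(\xi_\varepsilon*\rho_\varepsilon)\ge\xi_\varepsilon*\log\rho_\varepsilon$ pointwise; integrate against $m$ and use Fubini and radial symmetry of $\xi_\varepsilon$. The Gibbs / ``cross-entropy $\ge$ entropy'' heuristic points the other way: it bounds $\int\log(K_\varepsilon*m)\,dm$ from above.

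That upper bound is exactly what the paper uses in place of your Step 3, and this is the genuine difference between the two routes. For every $\nu$ and every $\varepsilon$,
$V^\sigma(\nu)-V^\sigma_\varepsilon(\nu)=\sigma\int\log\frac{\nu}{K_\varepsilon*\nu}\,d\nu=\sigma\operatorname{KL}(\nu|K_\varepsilon*\nu)\ge0$.
Hence $V^\sigma_\varepsilon\le V^\sigma$ pointwise, and the constant sequence is a recovery sequence for every $\nu$. Step 4 then closes directly, with no regularization of $\nu$, no diagonal argument, and no need for continuity of $F$.

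Your recovery-sequence construction is the generic $\Gamma$-convergence template, and it would be needed for kernelizations without such a pointwise comparison. As written, though, it overclaims. With convolution on a bounded $\mathcal{X}$, $K_\varepsilon*\nu$ does not converge to $\nu$ uniformly up to $\partial\mathcal{X}$, because near the boundary only part of the kernel's mass lies in $\mathcal{X}$. You only need the one-sided bound $\limsup_{\varepsilon\to0}\int\log(K_\varepsilon*\nu)\,d\nu\le\int\log\nu\,d\nu$. That can be rescued via reverse Fatou, but Gibbs gives it in one line.
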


We would like to remark that a version of Theorem \ref{thm: weak_* cv of minimizers} can also be obtained, under some additional assumptions, on $\mathcal{X} = \mathbb{R}^d$. For the sake of completeness, we discuss the details in Appendix \ref{sec: weak cv of minimizers}, even though in order to apply the remaining results in this paper to the energy function \eqref{eq: energy_function}, we require compactness of $\mathcal{X}$.

\section{ALGORITHM CONSTRUCTION}\label{sec: particle_system}

The practical usage of the discussed gradient flows depends on developing implementable algorithms for approximating the corresponding interacting particle systems \eqref{eq: FR_interacting_system}. In our algorithm, we draw the position of the particles $\tilde{X}^{i,N}$ at the beginning from the initial distributions $\tilde{\mu}_0^{i,N}$, and afterwards they remain constant. Then we apply a time discretization of the equations for the weights $\tilde{w}^{i,N}$ by an Euler scheme, and hence we obtain a numerical scheme which updates the weights of particles in each step. In the algorithm discussed in this section, the dynamics for the weights follows the equation
\eqref{eq: kernel_1}, for a smooth kernel $K_\varepsilon$.

Combining the theoretical results of the present paper with results guaranteeing convergence of the Fisher-Rao gradient flow to the minimizer $m^{\sigma,*}$ of \eqref{eq: energy} as $t \to \infty$ (see e.g.\ \cite{Liu-Majka-Szpruch-2023}), suggests that for a large number of particles, a large number of iterations, and a small $\varepsilon$, the output of our algorithm should provide a reasonable approximation of $m^{\sigma,*}$. However, a full quantitative analysis of the resulting approximation error remains a challenging open problem for future research. Note that the full analysis would need to take into account the following four errors: the error between the continuous flow and the target (due to running the algorithm for finite time), the error between the "correct" flow and the kernelized flow (due to the use of the kernel $K_\varepsilon$), the error between the continuous particle system and the continuous mean-field flow (due to using a finite number of particles) and the discretisation error for the particle system (due to a positive time-step).

\begin{center}
    \begin{tabular}{l}
    \hline \textbf{Algorithm:} Fisher--Rao gradient descent \\
    \hline
       \vtop{\hbox{\strut \textbf{Input:} particles $X^{i} \sim \mu_0^{i}$ with uniform weights} \hbox{\strut \hspace{0.2 cm} $w_0^{i} = 1$, for $i=1,\ldots,N$}
       \hbox{\strut \hspace{0.2 cm} $\Delta t = $ time interval, $J =$ number of iterations}} \\
       \vtop{\hbox{\strut \textbf{Steps:} update weights}
       \hbox{\strut \hspace{0.2cm} \textbf{for} $j=1:J$ \textbf{do}}
       \hbox{\strut \hspace{0.4cm} $\tilde{\mu}_{j-1}^N = \frac{1}{N} \sum_{l=1}^N w_{j-1}^{l} \delta_{X^{l}}$}
       \hbox{\strut \hspace{0.4cm} \textbf{for} $i=1:N$ \textbf{do}}
       \hbox{\strut \hspace{0.6 cm} $\tilde{V_j}^i := \frac{\delta F}{\delta \mu}(\tilde{\mu}_{j-1}^N,X^i) $}
       \hbox{\strut \hspace{0.5 cm} $+ \sigma \log \left( \frac{1}{N} \sum_{l=1}^N w_{j-1}^l K_\varepsilon(X^i- X^l) \right)$}
       \hbox{\strut \hspace{0.5 cm} $- \sigma \log \pi(X^i)  - \sigma \frac{1}{N} \sum_{k=1}^N w_{j-1}^k \times$}
       \hbox{\strut \hspace{0.3 cm} $\left( \log \left( \frac{1}{N} \sum_{l=1}^N w_{j-1}^l K_\varepsilon(X^k- X^l) \right) - \log \pi(X^k) \right)$}
       \hbox{\strut \hspace{0.6 cm} $\hat{w}_j^i = w_{j-1}^i \exp \left( - \tilde{V}_j^i \; \Delta t \right) $}
       \hbox{\strut \hspace{0.6 cm} $w_j^i = N \hat{w}_{j}^i / \sum_{l=1}^N \hat{w}_{j}^l $} }\\
       \vtop{\hbox{\strut \textbf{Output:} the weighted empirical distribution of} \hbox{\strut \hspace{0.2 cm} the particles approximates $m^{\sigma, *}$}}\\
       \hline
    \end{tabular}
\end{center}

Note that we need the coefficient $N$ in the update for $w_j^i$ since the weights are supposed to add up to $N$ (cf.\ \eqref{eq: FR_interacting_system}).

We remark that in our setting it is also possible to construct an algorithm corresponding to the Wasserstein-Fisher-Rao gradient flow \eqref{eq: WFRsystem}. This algorithm would have an additional step within the outer loop, corresponding to the diffusion movement of the particles due to the Wasserstein part of the flow (i.e., this step would correspond to the discretisation of the SDE in \eqref{eq: WFRsystem}). From a practical point of view, such an algorithm is expected to perform better than the algorithm corresponding to the "pure" Fisher-Rao flow, due to the additional exploration of the state space provided by the diffusion movement (the main drawback of the pure Fisher-Rao flow is that it does not expand the support of the initial distribution of the particles - see also Remark \ref{remark: pure FR flow}). However, since our theoretical results cover only the case of the pure Fisher-Rao flow (cf.\ Remark \ref{remark: Domingo}), we formulate the algorithm without the diffusion part.

Finally, note that due to the Feynman-Kac formula, there is a possible interpretation of the Fisher-Rao flow \eqref{eq: FR with a} as a Kolmogorov equation for a stochastic process with killing (see \cite[Section 6.7.2]{Applebaum} or \cite[Theorem 5.7.6 and Exercise 5.7.10]{KaratzasShreve}). This leads to an alternative idea for constructing a particle system approximating \eqref{eq: FR with a}, which instead of rebalancing weights, uses killing and replication mechanism with appropriately defined exponential clocks. Such algorithms were used in \cite{LuLuNolen2019, Pampel_2023, vandenEijnden2019}, however, we were unable to provide a rigorous proof for propagation of chaos for such particle systems in our framework \eqref{eq: energy}, which is why in this paper we are working with system \eqref{eq: FR_interacting_system}.

\begin{remark}\label{remark: pure FR flow}
As the final remark on the practical applicability of the Fisher-Rao algorithm presented in this section, we would like to stress that the main issue with the pure Fisher-Rao flow is that it is very sensitive to initialization, which is reflected in the theoretical results in papers such as \cite{LuLuNolen2019, Lu_2023, Liu-Majka-Szpruch-2023}, which state that, in order for the FR flow to converge (exponentially) to the target $\mu^*$, the initialization $\mu_0$ has to satisfy a "warm start" condition, i.e., there has to exist a constant $C>0$ such that for any $x \in \mathcal{X}$,
$$\frac{d\mu_0}{d\mu^*}(x) \geq C.$$
\end{remark}
In other words, $\mu_0$ and $\mu^*$ 
 have to be sufficiently similar to each other and in particular they need to have matching supports. Since in practice it may be difficult to choose initialization in a way that guarantees this "warm start" condition, especially in very high-dimensional settings, this may limit the applicability of the pure FR flow.

However, a natural idea for constructing a practically feasible algorithm that utilizes the pure FR flow, would be to initially run a different flow to provide an initial exploration of the state space and then to switch to the pure FR flow. For instance, one could first run a particle system approximating the pure Wasserstein flow 
 (from an arbitrary initialization), for a certain amount of time $t_0 >0$
 that guarantees that there exists $C>0$ such that for any $x \in \mathcal{X}$,
$$\frac{d\mu_{t_0}}{d\mu^*}(x) \geq C$$
and then "switch off" the Wasserstein flow and run a particle approximation of the pure FR flow. According to the theory from the papers cited above, a flow like this would converge exponentially to the target (and the corresponding algorithm would be cheaper to run than an algorithm that uses both Wasserstein and FR flows all the time). A fully rigorous analysis of the corresponding algorithm would require propagation of chaos results for both pure Wasserstein flows (which has been covered extensively in the literature, cf.\ the discussion in the introduction) and the pure FR flows (which we provide in the present paper). A full study of such algorithms (and in particular of the question of how to optimally choose $t_0$) will be the topic of our future work.

\bigskip
\newpage

\subsection*{Akcnowledgement}
PL acknowledges funding from the Slovenian Research and Innovation Agency (ARIS) under programme No. P1-0448 and Croatian Science Foundation grant no. 2277.

\bibliographystyle{apalike}
\bibliography{References}

\def\cprime{$'$}
\begin{thebibliography}{}

\bibitem[Ambrosio et~al., 2008]{AmbrosioGigliSavare2008}
Ambrosio, L., Gigli, N., and Savaré, G. (2008).
\newblock {\em Gradient Flows in Metric Spaces and in the Space of Probability
  Measures}.
\newblock Lectures in Mathematics ETH Zürich. Birkhäuser Verlag, Basel, 2
  edition.

\bibitem[Applebaum, 2009]{Applebaum}
Applebaum, D. (2009).
\newblock {\em L\'evy processes and stochastic calculus}, volume 116 of {\em
  Cambridge Studies in Advanced Mathematics}.
\newblock Cambridge University Press, Cambridge, second edition.

\bibitem[Aubin-Frankowski et~al., 2022]{AubinKorba2022}
Aubin-Frankowski, P.-C., Korba, A., and L\'{e}ger, F. (2022).
\newblock Mirror descent with relative smoothness in measure spaces, with
  application to {S}inkhorn and {EM}.
\newblock In {\em Advances in Neural Information Processing Systems},
  volume~35, pages 17263--17275. Curran Associates, Inc.

\bibitem[{Carrillo} et~al., 2024]{carrillo2024}
{Carrillo}, J.~A., {Chen}, Y., {Zhengyu Huang}, D., {Huang}, J., and {Wei}, D.
  (2024).
\newblock {Fisher-Rao Gradient Flow: Geodesic Convexity and Functional
  Inequalities}.
\newblock {\em arXiv e-prints}, page arXiv:2407.15693.

\bibitem[Carrillo et~al., 2019]{Carrillo2019}
Carrillo, J.~A., Craig, K., and Patacchini, F.~S. (2019).
\newblock A blob method for diffusion.
\newblock {\em Calculus of Variations and Partial Differential Equations},
  58(2):53.

\bibitem[Carrillo et~al., 2003]{carrilloMcCannVillani2003}
Carrillo, J.~A., McCann, R.~J., and Villani, C. (2003).
\newblock Kinetic equilibration rates for granular media and related equations:
  entropy dissipation and mass transportation estimates.
\newblock {\em Rev. Mat. Iberoam.}, 19(3):971--1018.

\bibitem[Carrillo et~al., 2016]{Carrillo2016}
Carrillo, J.~A., Patacchini, F.~S., Sternberg, P., and Wolansky, G. (2016).
\newblock Convergence of a particle method for diffusive gradient flows in one
  dimension.
\newblock {\em SIAM Journal on Mathematical Analysis}, 48(6):3708--3741.

\bibitem[Cavil et~al., 2017]{LeCavil2017}
Cavil, A.~L., Oudjane, N., and Russo, F. (2017).
\newblock Particle system algorithm and chaos propagation related to
  non-conservative mckean type stochastic differential equations.
\newblock {\em Stochastics and Partial Differential Equations: Analysis and
  Computations}, 5(1):1--37.

\bibitem[Chen et~al., 2024]{ChenLinRenWang2024}
Chen, F., Lin, Y., Ren, Z., and Wang, S. (2024).
\newblock Uniform-in-time propagation of chaos for kinetic mean field langevin
  dynamics.
\newblock {\em Electronic Journal of Probability}, 29(none).

\bibitem[Chen et~al., 2023]{ChenRenWang2023}
Chen, F., Ren, Z., and Wang, S. (2023).
\newblock Entropic fictitious play for mean field optimization problem.
\newblock {\em J. Mach. Learn. Res.}, 24:Paper No. [211], 36.

\bibitem[Chen et~al., 2025]{ChenRenWang}
Chen, F., Ren, Z., and Wang, S. (2025).
\newblock Uniform-in-time propagation of chaos for mean field {L}angevin
  dynamics.
\newblock {\em Ann. Inst. Henri Poincar\'e{} Probab. Stat.}, 61(4):2357--2404.

\bibitem[Chizat, 2022]{Chizat2022}
Chizat, L. (2022).
\newblock Mean-field langevin dynamics: Exponential convergence and annealing.
\newblock {\em Transactions on Machine Learning Research}.

\bibitem[Delarue and Tse, 2025]{delaruetse2021}
Delarue, F. and Tse, A. (2025).
\newblock Uniform in time weak propagation of chaos on the torus.
\newblock {\em Ann. Inst. Henri Poincar\'e{} Probab. Stat.}, 61(2):1021--1074.

\bibitem[Domingo-Enrich et~al., 2020]{Domingo-EnrichBruna}
Domingo-Enrich, C., Jelassi, S., Mensch, A., Rotskoff, G., and Bruna, J.
  (2020).
\newblock A mean-field analysis of two-player zero-sum games.
\newblock In Larochelle, H., Ranzato, M., Hadsell, R., Balcan, M., and Lin, H.,
  editors, {\em Advances in Neural Information Processing Systems}, volume~33,
  pages 20215--20226. Curran Associates, Inc.

\bibitem[Durmus et~al., 2020]{durmusandreasgullinzimmer2020}
Durmus, A., Eberle, A., Guillin, A., and Zimmer, R. (2020).
\newblock An elementary approach to uniform in time propagation of chaos.
\newblock {\em Proc. Am. Math. Soc.}, 148(12):5387--5398.

\bibitem[Feinberg et~al., 2014]{Feinberg2014}
Feinberg, E.~A., Kasyanov, P.~O., and Zadoianchuk, N.~V. (2014).
\newblock Fatou's lemma for weakly converging probabilities.
\newblock {\em Theory of Probability \& Its Applications}, 58(4):683--689.

\bibitem[Gallouët and Monsaingeon, 2017]{Gallouet2017}
Gallouët, T.~O. and Monsaingeon, L. (2017).
\newblock A jko splitting scheme for kantorovich–fisher–rao gradient flows.
\newblock {\em SIAM Journal on Mathematical Analysis}, 49(2):1100--1130.

\bibitem[{Gu} and {Kim}, 2025]{GuKim2025}
{Gu}, A. and {Kim}, J. (2025).
\newblock {Mirror Mean-Field Langevin Dynamics}.
\newblock {\em arXiv e-prints}, page arXiv:2505.02621.

\bibitem[Guillin and Monmarch{\'e}, 2021]{guillinmonmarche2021}
Guillin, A. and Monmarch{\'e}, P. (2021).
\newblock Uniform long-time and propagation of chaos estimates for mean field
  kinetic particles in non-convex landscapes.
\newblock {\em J. Stat. Phys.}, 185(2):20.
\newblock Id/No 15.

\bibitem[Hu et~al., 2021]{hu2021mean}
Hu, K., Ren, Z., {\v{S}}i{\v{s}}ka, D., and Szpruch, L. (2021).
\newblock Mean-field langevin dynamics and energy landscape of neural networks.
\newblock {\em Annales de l'Institut Henri Poincar{\'e}, Probabilit{\'e}s et
  Statistiques}, 57(4):2043--2065.

\bibitem[Jordan et~al., 1998]{JKO}
Jordan, R., Kinderlehrer, D., and Otto, F. (1998).
\newblock The variational formulation of the {F}okker-{P}lanck equation.
\newblock {\em SIAM J. Math. Anal.}, 29(1):1--17.

\bibitem[Karatzas and Shreve, 1991]{KaratzasShreve}
Karatzas, I. and Shreve, S.~E. (1991).
\newblock {\em Brownian motion and stochastic calculus}, volume 113 of {\em
  Graduate Texts in Mathematics}.
\newblock Springer-Verlag, New York, second edition.

\bibitem[Kerimkulov et~al., 2025]{Kerimkulov2025}
Kerimkulov, B., Leahy, J., {\v{S}}i{\v{s}}ka, D., Szpruch, {\L}., and Zhang, Y.
  (2025).
\newblock A fisher--rao gradient flow for entropy-regularised markov decision
  processes in polish spaces.
\newblock {\em Foundations of Computational Mathematics}.

\bibitem[Lacker, 2018]{Lacker2018}
Lacker, D. (2018).
\newblock Mean field games and interacting particle systems.

\bibitem[Lacker and Le~Flem, 2023]{lackerleflem2023}
Lacker, D. and Le~Flem, L. (2023).
\newblock Sharp uniform-in-time propagation of chaos.
\newblock {\em Probability Theory and Related Fields}, pages 1--38.

\bibitem[{Lascu} and {Majka}, 2025]{LascuMajka2025}
{Lascu}, R.-A. and {Majka}, M.~B. (2025).
\newblock {Non-convex entropic mean-field optimization via Best Response flow}.
\newblock {\em Advances in Neural Information Processing Systems (NeurIPS
  2025)}.

\bibitem[Lascu et~al., 2024]{lascu2024a}
Lascu, R.-A., Majka, M.~B., and Szpruch, L. (2024).
\newblock A fisher-rao gradient flow for entropic mean-field min-max games.
\newblock {\em Transactions on Machine Learning Research}.

\bibitem[{Lascu} et~al., 2024]{lascu2024linearconvergenceproximaldescent}
{Lascu}, R.-A., {Majka}, M.~B., {{\v{S}}i{\v{s}}ka}, D., and {Szpruch}, {\L}.
  (2024).
\newblock {Linear convergence of proximal descent schemes on the Wasserstein
  space}.
\newblock {\em arXiv e-prints}, page arXiv:2411.15067.

\bibitem[Leahy et~al., 2022]{leahy}
Leahy, J.-M., Kerimkulov, B., \v{S}i\v{s}ka, D., and Szpruch, {\L}. (2022).
\newblock Convergence of policy gradient for entropy regularized {MDP}s with
  neural network approximation in the mean-field regime.
\newblock In {\em Proceedings of the 39th International Conference on Machine
  Learning}, volume 162 of {\em Proceedings of Machine Learning Research},
  pages 12222--12252. PMLR.

\bibitem[Liero et~al., 2018]{LieroMielkeSavare2018}
Liero, M., Mielke, A., and Savar\'e, G. (2018).
\newblock Optimal entropy-transport problems and a new
  {H}ellinger-{K}antorovich distance between positive measures.
\newblock {\em Invent. Math.}, 211(3):969--1117.

\bibitem[Liu et~al., 2023]{Liu-Majka-Szpruch-2023}
Liu, L., Majka, M.~B., and Szpruch, L. (2023).
\newblock Polyak–Łojasiewicz inequality on the space of measures and
  convergence of mean-field birth-death processes.
\newblock {\em Appl Math Optim 87, 48 (2023).}

\bibitem[Lu et~al., 2019]{LuLuNolen2019}
Lu, Y., Lu, J., and Nolen, J. (2019).
\newblock Accelerating langevin sampling with birth-death.
\newblock arXiv e-prints.
\newblock arXiv:1905.09863.

\bibitem[Lu et~al., 2023]{Lu_2023}
Lu, Y., Slepčev, D., and Wang, L. (2023).
\newblock Birth–death dynamics for sampling: global convergence,
  approximations and their asymptotics.
\newblock {\em Nonlinearity}, 36(11):5731.

\bibitem[Malrieu, 2001]{malrieu2001}
Malrieu, F. (2001).
\newblock Logarithmic {Sobolev} inequalities for some nonlinear {PDE}'s.
\newblock {\em Stochastic Processes Appl.}, 95(1):109--132.

\bibitem[Mei et~al., 2018]{Mei_2018}
Mei, S., Montanari, A., and Nguyen, P.-M. (2018).
\newblock A mean field view of the landscape of two-layer neural networks.
\newblock {\em Proceedings of the National Academy of Sciences}, 115(33).

\bibitem[Monmarch{\'e}, 2017]{monmarche2017}
Monmarch{\'e}, P. (2017).
\newblock Long-time behaviour and propagation of chaos for mean field kinetic
  particles.
\newblock {\em Stochastic Processes Appl.}, 127(6):1721--1737.

\bibitem[Monmarch\'e et~al., 2024]{MonmarcheRenWang2024}
Monmarch\'e, P., Ren, Z., and Wang, S. (2024).
\newblock Time-uniform log-{S}obolev inequalities and applications to
  propagation of chaos.
\newblock {\em Electron. J. Probab.}, 29:Paper No. 154, 38.

\bibitem[{Nitanda}, 2024]{Nitanda2024}
{Nitanda}, A. (2024).
\newblock {Improved Particle Approximation Error for Mean Field Neural
  Networks}.
\newblock {\em Advances in Neural Information Processing Systems (NeurIPS
  2024)}.

\bibitem[{Nitanda} et~al., 2025]{NitandaLee2025}
{Nitanda}, A., {Lee}, A., {Tan Xing Kai}, D., {Sakaguchi}, M., and {Suzuki}, T.
  (2025).
\newblock {Propagation of Chaos for Mean-Field Langevin Dynamics and its
  Application to Model Ensemble}.
\newblock {\em Forty-second International Conference on Machine Learning
  (ICML2025)}.

\bibitem[Nitanda et~al., 2022]{Nitanda2022}
Nitanda, A., Wu, D., and Suzuki, T. (2022).
\newblock Convex analysis of the mean field langevin dynamics.
\newblock In Camps-Valls, G., Ruiz, F. J.~R., and Valera, I., editors, {\em
  Proceedings of The 25th International Conference on Artificial Intelligence
  and Statistics}, volume 151 of {\em Proceedings of Machine Learning
  Research}, pages 9741--9757. PMLR.

\bibitem[Pampel et~al., 2023]{Pampel_2023}
Pampel, B., Holbach, S., Hartung, L., and Valsson, O. (2023).
\newblock Sampling rare event energy landscapes via birth-death augmented
  dynamics.
\newblock {\em Physical Review E}, 107(2).

\bibitem[Rotskoff et~al., 2019]{vandenEijnden2019}
Rotskoff, G., Jelassi, S., Bruna, J., and Vanden-Eijnden, E. (2019).
\newblock Neuron birth-death dynamics accelerates gradient descent and
  converges asymptotically.
\newblock In Chaudhuri, K. and Salakhutdinov, R., editors, {\em Proceedings of
  the 36th International Conference on Machine Learning}, volume~97 of {\em
  Proceedings of Machine Learning Research}, pages 5508--5517. PMLR.

\bibitem[Salim et~al., 2020]{korbaproximal}
Salim, A., Korba, A., and Luise, G. (2020).
\newblock The {W}asserstein proximal gradient algorithm.
\newblock In {\em Advances in Neural Information Processing Systems},
  volume~33, pages 12356--12366. Curran Associates, Inc.

\bibitem[Schuh, 2024]{schuh2022}
Schuh, K. (2024).
\newblock Global contractivity for {L}angevin dynamics with
  distribution-dependent forces and uniform in time propagation of chaos.
\newblock {\em Ann. Inst. Henri Poincar\'e{} Probab. Stat.}, 60(2):753--789.

\bibitem[{Suzuki} et~al., 2023]{SuzukiNitandaWu2023}
{Suzuki}, T., {Nitanda}, A., and {Wu}, D. (2023).
\newblock {Uniform-in-time propagation of chaos for the mean field gradient
  Langevin dynamics}.
\newblock {\em The Eleventh International Conference on Learning
  Representations (ICLR2023)}.

\bibitem[{Zhu} and {Mielke}, 2024]{zhu2024kernelapproximationfisherraogradient}
{Zhu}, J.-J. and {Mielke}, A. (2024).
\newblock {Kernel Approximation of Fisher-Rao Gradient Flows}.
\newblock {\em arXiv e-prints}, page arXiv:2410.20622.

\end{thebibliography}

\section*{Checklist}

\begin{enumerate}

  \item For all models and algorithms presented, check if you include:
  \begin{enumerate}
    \item A clear description of the mathematical setting, assumptions, algorithm, and/or model. [Yes]
    \item An analysis of the properties and complexity (time, space, sample size) of any algorithm. [Not Applicable]
    \item (Optional) Anonymized source code, with specification of all dependencies, including external libraries. [Not Applicable]
  \end{enumerate}

  \item For any theoretical claim, check if you include:
  \begin{enumerate}
    \item Statements of the full set of assumptions of all theoretical results. [Yes]
    \item Complete proofs of all theoretical results. [Yes]
    \item Clear explanations of any assumptions. [Yes]     
  \end{enumerate}

  \item For all figures and tables that present empirical results, check if you include:
  \begin{enumerate}
    \item The code, data, and instructions needed to reproduce the main experimental results (either in the supplemental material or as a URL). [Not Applicable]
    \item All the training details (e.g., data splits, hyperparameters, how they were chosen). [Not Applicable]
    \item A clear definition of the specific measure or statistics and error bars (e.g., with respect to the random seed after running experiments multiple times). [Not Applicable]
    \item A description of the computing infrastructure used. (e.g., type of GPUs, internal cluster, or cloud provider). [Not Applicable]
  \end{enumerate}

  \item If you are using existing assets (e.g., code, data, models) or curating/releasing new assets, check if you include:
  \begin{enumerate}
    \item Citations of the creator If your work uses existing assets. [Not Applicable]
    \item The license information of the assets, if applicable. [Not Applicable]
    \item New assets either in the supplemental material or as a URL, if applicable. [Not Applicable]
    \item Information about consent from data providers/curators. [Not Applicable]
    \item Discussion of sensible content if applicable, e.g., personally identifiable information or offensive content. [Not Applicable]
  \end{enumerate}

  \item If you used crowdsourcing or conducted research with human subjects, check if you include:
  \begin{enumerate}
    \item The full text of instructions given to participants and screenshots. [Not Applicable]
    \item Descriptions of potential participant risks, with links to Institutional Review Board (IRB) approvals if applicable. [Not Applicable]
    \item The estimated hourly wage paid to participants and the total amount spent on participant compensation. [Not Applicable]
  \end{enumerate}

\end{enumerate}

\clearpage
\onecolumn
\renewcommand{\contentsname}{Table of Contents}
\tableofcontents
\clearpage

\clearpage
\appendix
\thispagestyle{empty}

\onecolumn
\aistatstitle{Appendix: On Propagation of Chaos for the Fisher-Rao Gradient Flow in Entropic Mean Field Optimization}

\section{WELL-POSEDNESS OF FISHER–RAO FLOWS}\label{section: lifted_flow}

\begin{proof}[Proof of Lemma \ref{lemma: projected_lifted}]
We aim to show that if $\nu_t$ solves \eqref{eq: lifted_flow}, then the projected density $\mu_t(x) = \int_0^\infty w \nu_t(x,w) \, dw$ satisfies \eqref{eq: FR_flow} in the weak sense, namely we want to show for all test function $\varphi \in \mathcal{C}^\infty_c(\mathcal{X})$
\begin{equation*}
    \begin{split}
        &\partial_t \int_{\mathcal{X}} \varphi(x) \mu_t(x) dx = - \int_{\mathcal{X}} \varphi(x) \cdot a(\mu_t,x) \cdot \mu_t(x) dx\\
        &\mu_t(x) : = \int_0^\infty w \nu_t(x,w)dw.
    \end{split}.
\end{equation*}

Since we assumed that $\nu_t$ solves \eqref{eq: lifted_flow} weakly, we know that for all $\psi \in \mathcal{C}^\infty_c(\mathcal{X} \times \mathbb{R}_+)$ we have
\begin{equation*}
\partial_t \int_{\mathcal{X} \times \mathbb{R}_+} \psi(x,w) \nu_t(x,w) dx dw 
= - \int_{\mathcal{X} \times \mathbb{R}_+} \frac{\partial \psi}{\partial w}(x,w) \cdot \nu_t(x,w) \cdot w \cdot a(\mu_t,x) \, dx dw.
\end{equation*}
Thanks to Remark \eqref{remark: cc infinity dense in Lp}, we choose $\psi$ in the form of $\psi(x,w) = w \varphi(x)$ for a $\varphi \in \mathcal{C}^\infty_c(\mathcal{X})$. Hence the above equation becomes

\begin{equation*}
\partial_t \int_{\mathcal{X} \times \mathbb{R}_+} 
w \varphi(x) \nu_t(x,w) dx dw 
= - \int_{\mathcal{X} \times \mathbb{R}_+} \varphi(x) \cdot \nu_t(x,w) \cdot w \cdot a(\mu_t,x) \, dx dw.
\end{equation*}
Note that the LHS could be rewrite as:
\begin{equation*}
LHS=\partial_t \int_{\mathcal{X}} \varphi(x) \int_0^\infty w \nu_t(x,w) dw dx 
= \partial_t \int_{\mathcal{X}} \varphi(x) \mu_t(x) dx
\end{equation*}
Moreover, the RHS could be rewrite as:
\begin{equation*}
RHS= - \int_{\mathcal{X}} \varphi(x) \cdot a(\mu_t,x) \left( \int_0^\infty w \nu_t(x,w) dw \right) dx 
= - \int_{\mathcal{X}} \varphi(x) \cdot a(\mu_t,x) \cdot \mu_t(x) dx.
\end{equation*}
Putting it together, we have shown that
\[
\partial_t \int_{\mathcal{X}} \varphi(x) \mu_t(x) dx = - \int_{\mathcal{X}} \varphi(x) \cdot a(\mu_t,x) \cdot \mu_t(x) dx,
\]
This completes the proof.
\end{proof}

\begin{proof}[Proof of Theorem \ref{thm: particle_representation}]
Let us take a test function $f\in \mathcal{C}^\infty_c(\mathcal{X} \times \mathbb{R}_+)$. By chain rule, we have 
\begin{align*}
    f(X,w_t) &= f(X,w_0) + \int_0^t \frac{\partial f}{\partial w}(X,w_s)dw_s\\
    &= f(X,w_0) - \int_0^t \frac{\partial f}{\partial w}(X,w_s)w_sa(\texttt{h}\operatorname{Law}(X, w_s),X)ds
\end{align*}
Now, if we take expectations on both sides, we get
\begin{align*}
    &\int_{\mathcal{X}\times \mathbb{R}_+} f(x,w) \operatorname{Law}(X, w_t)(x,w)dxdw\\
    &= \int_{\mathcal{X}\times \mathbb{R}_+} f(x,w) \operatorname{Law}(X, w_0)(x,w)dxdw\\
    &- \int_0^t \int_{\mathcal{X}\times \mathbb{R}_+} \frac{\partial f}{ \partial w}(x,w) w a (x,\texttt{h}\operatorname{Law}(X, w_s)) \operatorname{Law}(X, w_s)(x,w)dxdw ds
\end{align*}
For the second trem, by using integration by part, we get the following equation
\begin{equation*}
    \begin{split}
    &\int_{\mathcal{X}\times \mathbb{R}_+} f(x,w) \operatorname{Law}(X, w_t)(x,w)dxdw\\
    &= \int_{\mathcal{X}\times \mathbb{R}_+} f(x,w) \operatorname{Law}(X, w_0)(x,w)dxdw\\
    &+\int_0^t \int_{\mathcal{X}\times \mathbb{R}_+} f(x,w)\frac{\partial}{\partial w}\left(\operatorname{Law}(X, w_s)(x,w) w a(\texttt{h}\operatorname{Law}(X, w_s),x)\right) dxdwds
    \end{split}
\end{equation*}
where the last equality shows $\operatorname{Law}(X,w_s)$ is the weak solution in the sense
\begin{equation}
    \begin{cases}
        \partial_t \nu_t = \frac{\partial}{\partial w}(\nu_t(x,w) wa(\texttt{h} \nu_t,x) )\\
        \nu_t \rightharpoonup \nu_0
    \end{cases}
\end{equation}
\end{proof}

\begin{proof}[Proof of Theorem \ref{thm: exist_unique_solution}]
We abbreviate $\mathcal{P}_2(\mathcal{C}([0,T]; \mathcal{X} \times \mathbb{R}_+))$ by $\mathcal{P}_2^T$ and define the mapping
\begin{align*}
    \Psi: \mathcal{P}_2^T &\to \mathcal{P}_2^T,\\
    \nu &\mapsto \operatorname{Law}(X^\nu, w^\nu),
\end{align*}
where, for a fixed $\nu \in \mathcal{P}_2^T$,
\begin{equation}\label{eq: dynamic_with_choosen_nu}
    \begin{split}
        & (X^\nu, w_0^\nu) \sim \nu_0, \\
        & d w_t^\nu = - w_t^\nu \, a\big(\texttt{h} \nu_t, X^\nu\big) \, dt, \quad t \le T.
    \end{split}
\end{equation}
Here $\nu$ is treated as an external input, independent of the process itself, so \eqref{eq: dynamic_with_choosen_nu} is not a McKean–Vlasov equation but an ODE in $(X^\nu, w_t^\nu)$.  
By the classical existence and uniqueness theory for ODEs with Lipschitz drift, the system admits a unique strong solution whenever the coefficients are globally Lipschitz in $(x,w)$.  
In our setting, it suffices to check that
\[
    (x,w) \mapsto w \, a(\texttt{h} \nu_t, x)
\]
is globally Lipschitz. Indeed, for any $(x,w),(x',w') \in \mathcal{X} \times \mathbb{R}_+$,
\begin{align*}
    \lvert w\,a(\texttt{h}\,\nu_t, x) - w'\,a(\texttt{h}\,\nu_t, x') \rvert 
    &\le \lvert w - w'\rvert \,\lvert a(\texttt{h}\,\nu_t, x) \rvert 
        + \lvert w'\rvert \,\lvert a(\texttt{h}\,\nu_t, x) - a(\texttt{h}\,\nu_t, x') \rvert \\
    &\le M_a\,\lvert w - w'\rvert + e^{M_a T} L_a\,\lvert x - x'\rvert,
\end{align*}
where $M_a$ and $L_a$ denote the uniform bound and Lipschitz constant of $a$, respectively.  
Moreover, the boundedness of $a$ implies, via Proposition~\ref{prop: w_bound}, that $w_t^\nu$ is uniformly bounded in $t$, which ensures that the drift in \eqref{eq: dynamic_with_choosen_nu} is indeed globally Lipschitz.

Furthermore,
\[
    \mathbb{E}\left[ \lVert (X^\nu, w^\nu) \rVert_T^2 \right] < \infty,
\]
so for all $\nu \in \mathcal{P}_2^T$, the mapping $\Psi(\nu)$ remains in $\mathcal{P}_2^T$.  

Now let $\nu, \nu' \in \mathcal{P}_2^T$ be such that $\nu_0 = \nu'_0$.  
We associate to them the processes $(X^\nu, w^\nu)$ and $(X^{\nu'}, w^{\nu'})$, which share identical initial conditions almost surely:
\[
    (X^\nu_0, w^\nu_0) = (X^{\nu'}_0, w^{\nu'}_0) \quad \text{a.s.}
\]
Let $\Psi(\nu)$ and $\Psi(\nu')$ denote the probability measures generated by \eqref{eq: dynamic_with_choosen_nu}. We then have:

\begin{align*}
        &\mathbb{E}\left[ \lVert w^\nu -w^{\nu'}\rVert_T^2\right] \leq T \mathbb{E}\left[ \int_0^T \left\lvert w^\nu_s a(\texttt{h}\nu_s,X^\nu) - w^{\nu'}_s a(\texttt{h} \nu'_s,X^{\nu'})\right\rvert^2ds\right]\\
        &\leq 2T \mathbb{E}\left[ \int_0^T \left\lvert w^\nu_s a(\texttt{h} \nu_s,X^\nu) - w^\nu_s a(\texttt{h}\nu'_s, X^{\nu'})\right\rvert^2 ds\right] \\
        &+ 2T \mathbb{E}\left[ \int_0^T \left\lvert w^\nu_s a(\texttt{h}\nu'_s,X^{\nu'}) - w^{\nu'}_s a(\texttt{h} \nu'_s, X^{\nu'})\right\rvert^2 ds\right]\\
        & \leq 2TL_a^2e^{2M_aT}\mathbb{E}\left[\int_0^T \mathcal{W}^2_2\left(\texttt{h}\nu_s,\texttt{h}\nu'_s\right)+\left\lvert X^\nu -X^{\nu'}\right \rvert^2 ds\right] +2TM_a^2 \mathbb{E}\left[\int_0^T \left\lvert w^\nu_s - w^{\nu'}_s \right\rvert^2 ds\right]\\
        &\leq 2TL_a^2e^{2M_aT}\mathbb{E}\left[\int_0^T \mathcal{W}^2_2\left(\texttt{h}\nu_s,\texttt{h}\nu'_s\right)+\left\lvert X^\nu -X^{\nu'}\right \rvert^2 ds\right] +2TM_a^2 \mathbb{E}\left[\int_0^T \left\lVert w^\nu - w^{\nu'} \right\rVert^2_s ds\right]\\
    \end{align*}
    Here the first inequality follows from the Cauchy–Schwarz inequality, 
the second from the elementary bound $(a+b)^2 \leq 2a^2 + 2b^2$, 
and the third from the Lipschitz continuity of $a$ together with the boundedness of $w^\nu$ (Proposition~\ref{prop: w_bound}) 
and the uniform bound $\lvert a \rvert \le M_a$.  
Finally, the last inequality uses the fact that 
$\lvert f(s) \rvert \le \lVert f \rVert_s$ for any $s \le T$.

    Note that we have
    \begin{equation*}
        \mathbb{E} \left[\left\lvert X^\nu_0 - X^{\nu'}_0\right\rvert^2 \right] =0.
     \end{equation*}
     Combining them together, we have
     \begin{align*}
         &\mathbb{E} \left[\left\lVert X^\nu - X^{\nu'}\right\rVert_T^2 +\lVert w^\nu -w^{\nu'}\rVert_T^2\right] = \mathbb{E} \left[\lVert w^\nu -w^{\nu'}\rVert_T^2\right]\\
         &\leq 2TL_a^2e^{2M_aT}\int_0^T \mathcal{W}^2_2\left(\texttt{h}\nu_s,\texttt{h}\nu'_s\right)ds+2TM_a^2 \mathbb{E}\left[\int_0^T \left\lVert w^\nu - w^{\nu'} \right\rVert^2_s ds\right].
     \end{align*}
By applying Grönwall's lemma together with Proposition \ref{prop: w_bound},
and then using 
\(
\mathcal W_2(\nu_s,\nu'_s) \le \mathcal W_{2,s}(\nu,\nu')
\)
for all \(s\in[0,T]\), we obtain
\begin{align*}
    \mathbb{E} \left[ \lVert w^\nu - w^{\nu'}\rVert_T^2 \right]
    & \leq 2TL_a^2e^{4M_aT+2T^2M_a^2} \int_0^T\mathcal{W}^2_2(\nu_s, \nu'_s)ds\\
    & \leq 2TL_a^2e^{4M_aT+2T^2M_a^2} \int_0^T\mathcal{W}^2_{2,s}(\nu, \nu')ds.
\end{align*}

We denote $C:=  2TL_a^2e^{4M_aT+2T^2M_a^2}$. We have
\begin{equation} \label{eq: FR_estimation_nu_nu'}
    \mathbb{E} \left[ \lVert X^\nu - X^{\nu'} \rVert_T^2 +\lVert w^\nu - w^{\nu'}\rVert_T^2 \right] \leq C\int_0^t \mathcal{W}_{2,s}^2(\nu, \nu')  ds.
\end{equation}
 Moreover, since the joint laws of $((X^\nu,w^\nu),(X^{\nu'},w^{\nu'}))$ is one of the coupling between $\Psi(\nu)$ and $\Psi(\nu')$, that is, it belongs to the set $\Pi(\Psi(\nu),\Psi(\nu'))$. We have
\begin{align*}
    \mathcal{W}_{2,T}^2 \left(\Psi(\nu), \Psi(\nu')\right) &= \inf_{\pi \in \Pi((\Psi(\nu), \Psi(\nu')))} \mathbb{E}_{((X^\nu,w^\nu),(X^{\nu'},w^{\nu'})) \sim \pi}\left[ \lVert X^\nu - X^{\nu'} \rVert_T^2 +\lVert w^\nu - w^{\nu'}\rVert_T^2 \right]\\
    & \leq \mathbb{E} \left[ \lVert X^\nu - X^{\nu'} \rVert_T^2 +\lVert w^{\nu} - w^{\nu'}\rVert_T^2 \right]\\
    &\leq C\int_0^T \mathcal{W}_{2,s}^2(\nu, \nu')  ds 
\end{align*}
We conclude that
\begin{equation}
    \mathcal{W}_{2,T}^2 \left(\Psi(\nu), \Psi(\nu')\right) \leq C\int_0^T \mathcal{W}_{2,s}^2(\nu, \nu')  ds.
\end{equation}
Hence for all $\nu \in \mathcal{P}_{2,T}$ we have
\begin{align*}
    \mathcal{W}_{2,T}^2 \left(\Psi^{k+1}(\nu), \Psi^{k}(\nu)\right) &\leq C \int_0^T \mathcal{W}_{2,s}^2 \left(\Psi^{k}(\nu), \Psi^{k-1}(\nu')\right)  ds\\
    & \leq C^k \int_0^T\int_0^{s} \int_0^{s_1}\cdots \int_0^{s_{k-2}} \mathcal{W}_{2,s_{k-1}}^2(\Psi(\nu), \nu') d s_{k-1} \cdots d s_1 ds\\
    & \leq \frac{C^k T^k}{k!}\mathcal{W}_{2,T}^2(\Psi(\nu), \nu).
\end{align*}
As $(\mathcal{P}_{2,T}, \mathcal{W}_{2,T})$ is complete and $\Psi^{k}(\nu)$ is a Cauchy sequence, the sequence $\Psi^{k}(\nu)$ converges to the unique fixed point of $\Psi$.
\end{proof}

\section{PROPAGATION OF CHAOS FOR THE INTERACTING PARTICLE SYSTEM}\label{sec: POC}

\begin{remark}
For the particle system under consideration, the total weight 
\[
\sum_{i=1}^N w^{i,N}_t
\]
is conserved in time. Indeed, using the property of $a$ that
\[
\int a(m,x) \, m(dx) = 0 \quad \text{for any probability measure } m,
\]
we compute
\begin{align*}
    \partial_t \sum_{i=1}^N w^{i,N}_t
    &= \sum_{i=1}^N w_t^{i,N} a(\texttt{h} \nu_t^N, X^{i,N}_t) \\
    &= \int a( \texttt{h} \nu_t^N,x) \sum_{i=1}^N w^{i,N}_t \delta_{X^{i,N}_t}(x) \, dx \\
    &= \left(\sum_{i=1}^N w^{i,N}_t \right) \int a( \texttt{h} \nu_t^N,x) \,\texttt{h} \nu_t^N(dx) = 0,
\end{align*}
which implies $\sum_{i=1}^N w^{i,N}_t = \sum_{i=1}^N w^{i,N}_0$ for all $t \ge 0$. The same property holds for the tilde system $\tilde{\nu}_t^N$.

In this work, we choose $\sum_{i=1}^N w^{i,N}_0 = N$. This convention ensures that, with the empirical measure
\[
\nu_t^N = \frac{1}{N} \sum_{i=1}^N \delta_{(w^{i,N}_t, X^{i,N}_t)},
\]
its projection
\[
\texttt{h} \nu_t^N = \frac{1}{N} \sum_{i=1}^N w^{i,N}_t \, \delta_{X^{i,N}_t}
\]
is a probability measure. The same reasoning applies to $\tilde{\nu}_t^N$ and $\texttt{h} \tilde{\nu}_t^N$.
\end{remark}

We now state an important remark on the convergence result.

\begin{remark}\label{remark: LLN}
Let \(\mathcal{X}\) be a separable metric space and \((X_i)_{i \geq 1}\) be i.i.d.\ \(\mathcal{X}\)-valued random variables with law \(\mu\).  
Let
\[
    \mu^N := \frac{1}{N} \sum_{i=1}^N \delta_{X_i}
\]
be the empirical measure.  
If \(p \geq 1\) and \(\mu \in \mathcal{P}^p(\mathcal{X})\), then
\[
    \mathcal{W}_p(\mu^N, \mu) \to 0 \quad \text{almost surely as } N \to \infty,
\]
and moreover,
\[
    \mathbb{E} \big[ \mathcal{W}_p^p(\mu^N, \mu) \big] \to 0.
\]
A proof of this result can be found in \cite[Corollary~2.14]{Lacker2018}.
\end{remark}

\begin{proof}[Proof of Theorem \ref{thm: POC}]
    Let us rewrite the particle systems \eqref{eq: systerm iid} and \eqref{eq: FR_interacting_system} in integral form. For the independent system we have
    \begin{equation*}
        \begin{cases}
            X_t^i = X_0^i, \\[0.5em]
            w_t^i = w_0^i - \int_0^t w_s^i \, a\big(\texttt{h}\nu_s, X^i\big)\, ds,
        \end{cases}
    \end{equation*}
    and for the interacting system
    \begin{equation*}
        \begin{cases}
            \tilde{X}_t^{i,N} = \tilde{X}_0^{i,N}, \\[0.5em]
            \tilde{w}_t^{i,N} = \tilde{w}_0^{i,N} 
            - \int_0^t \tilde{w}_s^{i,N}\, a\big( \texttt{h}\tilde{\nu}_s^N, \tilde{X}^{i,N}\big)\, ds.
        \end{cases}
    \end{equation*}
    We have
    \begin{align*}
        &\mathbb{E}\left[ \lVert w^i -\tilde{w}^{i,N}\rVert_T^2\right] \leq \mathbb{E}\left[\left|w^i_0 -\tilde{w}^{i,N}_0\right|\right]+ T \mathbb{E}\left[ \int_0^T \left\lvert w^i_s a(\texttt{h}\nu_s^i,X^i) - \tilde{w}^{i,N}_s a(\texttt{h}\tilde{\nu}_s^N, \tilde{X}^{i,N})\right\rvert^2ds\right]\\
        &\leq \mathbb{E}\left[\left|w^i_0 -\tilde{w}^{i,N}_0\right|\right]\\
        &+2T \mathbb{E}\left[ \int_0^T \left\lvert w^i_s a(\texttt{h}\nu_s^i, X^i) - w^i_s a(\texttt{h}\tilde{\nu}_s^N,\tilde{X}^{i,N})\right\rvert^2  +\left\lvert w^i_s a(\texttt{h}\tilde{\nu}_s^N,\tilde{X}^{i,N}) - \tilde{w}^{i,N}_s a(\tilde{\mu}_s^N,\tilde{X}^{i,N})\right\rvert^2 ds\right] \\
        & \leq \mathbb{E}\left[\left|w^i_0 -\tilde{w}^{i,N}_0\right|\right]\\
        &+2TL_a^2e^{2M_aT}\mathbb{E}\left[\int_0^T \mathcal{W}^2_2\left(\texttt{h}\nu_s^i,\texttt{h}\tilde{\nu}_s^N\right)+\left\lvert X^i -\tilde{X}^{i,N}\right \rvert^2 ds\right] +2TM_a^2 \mathbb{E}\left[\int_0^T \left\lvert w_s^i - \tilde{w}^{i,N}_s \right\rvert^2 ds\right]\\
        &\leq \mathbb{E}\left[\left|w^i_0 -\tilde{w}^{i,N}_0\right|\right]+2TL_a^2e^{2M_aT}\mathbb{E}\left[\int_0^T \mathcal{W}^2_2\left(\texttt{h}\nu_s, \texttt{h}\tilde{\nu}_s^N\right)ds\right]\\
        &+(2TM_a^2+ 2TL_a^2e^{2M_aT}) \mathbb{E}\left[\int_0^T \left \lVert X^i -\tilde{X}^{i,N}\right \rVert^2_s+\left\lVert w^i - \tilde{w}^{i,N} \right\rVert^2_s ds\right]\\
    \end{align*}
    where the first inequality follows from integrating the square of the drift term over $[0,T]$ and applying Cauchy–Schwarz inequality. The second inequality uses the standard bound $(a+b)^2 \leq 2a^2 + 2b^2$. The third inequality follows from the boundedness of $a$ by $M_a$, its Lipschitz continuity with constant $L_a$, and the uniform bound $\lvert w^i_s \rvert \le e^{M_a T}$ from Proposition~\ref{prop: w_bound}. Finally, the last inequality is obtained by combining like terms and applying the bound $\lvert w_s\rvert \leq \lVert w\rVert_s$.

    Moreover, since $X^i$ and $\tilde{X}^{i,N}$ are constant in time, their sup–norm difference reduces to the pointwise difference, i.e.,
    \begin{equation*}
         \mathbb{E} \left[\left\lVert X^i - \tilde{X}^{i,N}\right\rVert_T^2 \right] 
            =  \mathbb{E} \left[ \left\lvert X^i - \tilde{X}^{i,N}\right\rvert^2 \right].
    \end{equation*}

     Combining them together, we have 
    \begin{align*}
         &\mathbb{E} \left[\left\lVert X^i - \tilde{X}^{i,N}\right\rVert_T^2 +\lVert w^i -\tilde{w}^{i,N}\rVert_T^2\right]\\
         &\leq 2TL_a^2e^{2M_aT}\mathbb{E}\left[\int_0^T \mathcal{W}_2^2\left(\texttt{h}\nu_s,\texttt{h}\tilde{\nu}_s^N\right)ds\right] + \mathbb{E}\left[ \left|X^i-\tilde{X}^{i,N} \right|^2+ \left| w^i_0 -\tilde{w}^{i,N}_0 \right|^2 \right]\\
         &+\left(2TM_a^2+ 2TL_a^2e^{2M_aT}\right) \mathbb{E}\left[\int_0^T \left \lVert X^i -\tilde{X}^{i,N}\right \rVert^2_s+\left\lVert w^i - \tilde{w}^{i,N} \right\rVert^2_s ds\right]
     \end{align*}
    Let us denote $C_1 = 2TL_a^2e^{2M_aT}$ and $C_2 = 2TM_a^2+ 2TL_a^2e^{2M_aT}$. Remark that both $C_1$ and $C_2$ are independent with $N$ the number of particles.
    By applying Grönwall’s lemma and using the stability estimates from Propositions~\ref{prop: w_bound} and~\ref{prop: W-2t bound W-2}, we obtain:
    \begin{equation}\label{eq: estmation_system}
    \begin{split}
       &\mathbb{E} \left[\left\lVert X^i - \tilde{X}^{i,N}\right\rVert_T^2 +\lVert w^i -\tilde{w}^{i,N}\rVert_T^2\right]\\
       &\leq e^{ C_2 T} \left(C_1\mathbb{E}\left[\int_0^T\mathcal{W}^2_2(\texttt{h}\nu_s, \texttt{h}\tilde{\nu}^N_s)ds\right]+ \mathbb{E}\left[ \left|X^i-\tilde{X}^{i,N} \right|^2+ \left| w^i_0 -\tilde{w}^{i,N}_0 \right|^2 \right]\right)\\
       &\leq e^{ C_2 T} \left(C_1 e^{2M_a T}\mathbb{E}\left[\int_0^T\mathcal{W}^2_2(\nu_s, \tilde{\nu}^N_s)ds\right]+ \mathbb{E}\left[ \left|X^i-\tilde{X}^{i,N} \right|^2+ \left| w^i_0 -\tilde{w}^{i,N}_0 \right|^2 \right]\right)\\
       &\leq e^{C_2 T} \left(C_1 e^{2M_a T}\mathbb{E}\left[\int_0^T\mathcal{W}^2_{2,s}(\nu, \tilde{\nu}^N)ds\right]+ \mathbb{E}\left[ \left|X^i-\tilde{X}^{i,N} \right|^2+ \left| w^i_0 -\tilde{w}^{i,N}_0 \right|^2 \right]\right).
    \end{split}
    \end{equation}
    We also have
    \begin{equation}\label{eq: estimation_Wasserstein}
         \mathbb{E}\left[ \mathcal{W}^2_{2,T}\left(\nu^N, \tilde{\nu}^N\right)\right] 
         \leq \frac{1}{N}\mathbb{E}\left[\sum_{i=1}^N \left\lVert X^i - \tilde{X}^{i,N}\right \rVert_T^2
         +\left\lVert w^i - \tilde{w}^{i,N}\right\rVert_T^2\right],
    \end{equation}
    which follows by choosing the specific coupling \(\frac{1}{N}\sum_{i=1}^N 
        \delta_{\left((w^i,X^i),(\tilde{w}^{i,N},\tilde{X}^{i,N})\right)}\).
    By using the triangle inequality, \eqref{eq: estimation_Wasserstein} and \eqref{eq: estmation_system} in order, we obtain:
    \begin{align*}
        &\mathbb{E}\left[ \mathcal{W}^2_{2,T}\left(\nu, \tilde{\nu}^N\right)\right] \leq 2\mathbb{E}\left[ \mathcal{W}^2_{2,T}\left(\nu, \nu^N\right)\right]+2\mathbb{E}\left[ \mathcal{W}^2_{2,T}\left(\nu^N, \tilde{\nu}^N\right)\right]\\
        &\leq 2\mathbb{E}\left[ \mathcal{W}^2_{2,T}\left(\nu, \nu^N\right)\right] + \frac{2}{N} \mathbb{E}\left[\sum_{i=1}^N \left\lVert X^i - \tilde{X}^{i,N}\right \rVert_T^2+\left\lVert w^i - \tilde{w}^{i,N}\right\rVert_T^2\right]\\
        &\leq 2\mathbb{E}\left[ \mathcal{W}^2_{2,T}\left(\nu, \nu^N\right)\right] \\
        &+2e^{C_2 T} C_1 e^{2M_a T}\mathbb{E}\left[\int_0^T\mathcal{W}^2_{2,s}(\nu, \tilde{\nu}^N)ds\right]+ \frac{2e^{C_2 T}}{N} \sum_{i=1}^N\mathbb{E}\left[ \left|X^i-\tilde{X}^{i,N} \right|^2+ \left| w^i_0 -\tilde{w}^{i,N}_0 \right|^2 \right]
    \end{align*}

    By applying Gronwall's lemma, we have

    \begin{align*}
        &\mathbb{E}\left[ \mathcal{W}^2_{2,T}\left(\nu, \tilde{\nu}^N\right)\right] \\
        &\leq 2 e^{2e^{C_2 T} C_1 e^{2M_a T} T}\left(\mathbb{E}\left[ \mathcal{W}^2_{2,T}\left(\nu, \nu^N\right)\right]+\frac{e^{C_2 T}}{N} \sum_{i=1}^N\mathbb{E}\left[ \left|X^i-\tilde{X}^{i,N} \right|^2+ \left| w^i_0 -\tilde{w}^{i,N}_0 \right|^2 \right] \right)
    \end{align*}

    which yields the result thanks to Remark \ref{remark: LLN} and our assumption on the initial condition \ref{hypo: initial_condition}.
\end{proof}

\begin{proof}[Proof of Corollary \ref{cor: POC}]
By Propositions \ref{prop: w_bound} and \ref{prop: W-2t bound W-2}, we have for all \(t \in [0,T]\),
\[
\mathcal{W}^2_2\left(\mu_t, \tilde{\mu}_t^N\right) 
\leq e^{2M_a} \mathcal{W}^2_2\left(\nu_t, \tilde{\nu}_t^N\right) 
\leq e^{2M_a} \mathcal{W}^2_{2,T}\left(\nu, \tilde{\nu}^N\right).
\]
Since the right-hand side does not depend on \(t\), taking the supremum over \(t \in [0,T]\) yields
\[
\sup_{t \in [0,T]}\mathcal{W}^2_2\left(\mu_t, \tilde{\mu}_t^N\right) 
\leq e^{2M_a} \mathcal{W}^2_{2,T}\left(\nu, \tilde{\nu}^N\right).
\]
Applying Theorem \ref{thm: POC} then gives the desired result.
\end{proof}

\section{WEAK-* CONVERGENCE OF KERNELIZED MINIMIZERS}\label{sec: weak cv of minimizers}

In this section, we investigate the variational properties of the kernelized energy
\[
V^\sigma_\varepsilon(m)
=
F(m)
+
\sigma
\int \log\!\left( \frac{K_\varepsilon * m}{\pi} \right)\, dm,
\]
with particular emphasis on the existence of minimizers and their behaviour
as the regularization parameter $\varepsilon \to 0$.

We will prove Theorem \ref{thm: weak_* cv of minimizers} under Assumptions \ref{hypo: F_full}, \ref{hypo: mollifier_kernel} and \ref{hypo: pi_full} for compact $\mathcal{X} \subset \mathbb{R}^d$, and also formulate an additional result for $\mathcal{X} = \mathbb{R}^d$.

More precisely, for $\mathcal{X} = \mathbb{R}^d$, we need the following growth condition on the potential $U$ (recall that $\pi \propto e^{-U}$).

\begin{assumption}\label{hypo:pi_noncompact}
We assume that there exist constants
$C_0,A_0\in\mathbb R$ and $C_1,A_1>0$ such that
\[
C_0 + C_1 |x|^2 \le U(x) \le A_0 + A_1 |x|^2,
\qquad \forall x\in\mathbb R^d.
\]
\end{assumption}

Then we have the following additional result.

\begin{theorem}[Weak-* convergence of minimizers - Non Compact case]\label{thm: weak_* cv of minimizers non compact}
Suppose $\mathcal{X} = \mathbb{R}^d$, let Assumptions \ref{hypo: F_full}, \ref{hypo: mollifier_kernel}, \ref{hypo: pi_full} and \ref{hypo:pi_noncompact} hold, and suppose the kernel $\xi$ is Gaussian. 
 Then there exists a minimizer of $V_\varepsilon^\sigma$ in $\mathcal{P}_2(\mathbb{R}^d)$ for all $\varepsilon>0$. Moreover, for all $\varepsilon>0$, if $\mu_\varepsilon \in \mathcal{P}_2(\mathbb{R}^d)$ is a minimizer of $V_\varepsilon^\sigma$ then there exists a subsequence \((\varepsilon_k)_{k \ge 1}\) such that $\varepsilon_k \to 0$ as $k \to \infty$ such that
\[
    \mu_{\varepsilon_k} \stackrel{*}{\rightharpoonup} \mu \quad \text{as } k \to \infty,
\]
where \(\mu\) is a minimizer of \(V^{\sigma}\).
\end{theorem}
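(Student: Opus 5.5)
The plan is to use the direct method of the calculus of variations in two stages: first obtain existence of minimizers of $V^\sigma_\varepsilon$ on $\mathcal{P}_2(\mathbb{R}^d)$ for fixed $\varepsilon$, and then prove the limit $\varepsilon\to 0$ by a $\Gamma$-convergence type argument.

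The first ingredient is a uniform coercivity estimate. Write
\[
\int \log\frac{K_\varepsilon * m}{\pi}\,dm = \int \log (K_\varepsilon*m)\,dm + \int U\,dm .
\]
By Jensen's inequality (concavity of $\log$) together with the semigroup structure of Gaussians, $K_\varepsilon = \xi_\varepsilon*\xi_\varepsilon$ is again Gaussian. Hence we can compare $\int\log(K_\varepsilon*m)\,dm$ with $\int \log(\xi_\varepsilon*m)\,d(\xi_\varepsilon*m)$, i.e.\ with the entropy of a smoothed measure; this is the known Lu--Slep\v{c}ev--Wang inequality $\int \log(K_\varepsilon*m)\,dm \ge \int \log(\xi_\varepsilon*m)\,d(\xi_\varepsilon*m) = -H(\xi_\varepsilon*m)$, obtained from Jensen applied as $\log\int\xi_\varepsilon(x-y)(\xi_\varepsilon*m)(y)dy\ge\int\xi_\varepsilon(x-y)\log(\xi_\varepsilon*m)(y)dy$. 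The Gaussian lower bound on entropy via the second moment,
\[
-H(\rho)\ge -C-\tfrac{d}{2}\log\Big(1+\int|x|^2d\rho\Big),
\]
combined with $\int|x|^2 d(\xi_\varepsilon*m)\le 2\int|x|^2dm+2\varepsilon^2 M_2(\xi)$ and the lower bound $U\ge C_0+C_1|x|^2$ from Assumption~\ref{hypo:pi_noncompact}, gives
\[
V^\sigma_\varepsilon(m)\ge F_{\min}+\sigma\Big(C_0+C_1M_2(m)-C-\tfrac d2\log\big(1+2M_2(m)+2\varepsilon^2M_2(\xi)\big)\Big),
\]
uniformly for $\varepsilon\le 1$. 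Sublevel sets therefore have uniformly bounded second moments, so they are tight and relatively compact in the weak topology (and in $\mathcal{W}_p$ for $p<2$).

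Existence for fixed $\varepsilon$ then follows by taking a minimizing sequence, extracting a weakly convergent subsequence, and using lower semicontinuity. For the entropy-type term I would use Fatou's lemma for weakly converging measures (Feinberg et al.): $K_\varepsilon*m_n\to K_\varepsilon*m$ locally uniformly, because the Gaussian kernel is bounded and Lipschitz. The integrand $\log(K_\varepsilon*m_n)+U$ is bounded below by $\log(K_\varepsilon*m_n)\ge -c-|x|^2/\varepsilon^2$-type Gaussian bounds. This bound involves $|x|^2/(2\varepsilon^2\cdot 2)$, so uniform integrability has to be arranged using $U$'s quadratic growth. If $C_1$ is too small relative to $1/\varepsilon^2$, I would instead split the functional into the convex part and use the Jensen bound above to control the negative part. $F$ is lower semicontinuous by Assumption~\ref{hypo:F_full:iii}, and finiteness of $V_\varepsilon^\sigma$ at a Gaussian competitor, guaranteed by $U\le A_0+A_1|x|^2$, shows the infimum is finite. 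The limit has finite second moment by lower semicontinuity of moments.

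For the limit $\varepsilon\to0$: given minimizers $\mu_\varepsilon$, compare with a fixed competitor of finite energy (e.g.\ $\pi$ itself, or any smooth $\nu$) to get $\sup_{\varepsilon\le1}V_\varepsilon^\sigma(\mu_\varepsilon)<\infty$. The coercivity bound then yields tightness and a subsequence $\mu_{\varepsilon_k}\rightharpoonup\mu$. Two $\Gamma$-convergence inequalities are needed.

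\emph{Liminf inequality.} Using the Jensen bound, $V^\sigma_{\varepsilon}(\mu_\varepsilon)\ge F(\mu_\varepsilon)-H(\xi_\varepsilon*\mu_\varepsilon)+\int U\,d\mu_\varepsilon$. Since $\xi_{\varepsilon_k}*\mu_{\varepsilon_k}\rightharpoonup\mu$, lower semicontinuity of the relative entropy (writing $-H(\rho)+\int U d\rho=\operatorname{KL}(\rho|\pi)+\text{const}$, and handling the shift between $\int U\,d\mu_\varepsilon$ and $\int U\,d(\xi_\varepsilon*\mu_\varepsilon)$, which costs $O(\varepsilon)$ terms controlled by the quadratic growth of $U$ and bounded second moments) gives $\liminf V^\sigma_{\varepsilon_k}(\mu_{\varepsilon_k})\ge V^\sigma(\mu)$.

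\emph{Limsup inequality.} For any $\nu$ with $V^\sigma(\nu)<\infty$, show $V^\sigma_\varepsilon(\nu)\to V^\sigma(\nu)$, first for smooth positive $\nu$ with Gaussian tails by dominated convergence, then for general $\nu$ by a density argument in energy (approximating by $\nu$ convolved with a Gaussian and using convexity of KL).

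Combining the two, $V^\sigma(\mu)\le\liminf V^\sigma_{\varepsilon_k}(\mu_{\varepsilon_k})\le\limsup V^\sigma_{\varepsilon_k}(\nu)=V^\sigma(\nu)$ for every such $\nu$, so $\mu$ is a minimizer. The main obstacle I expect is the lower semicontinuity and uniform lower bound for $\int\log(K_\varepsilon*m)\,dm$ on the noncompact space. Gaussianity of $\xi$ is used precisely to make the Jensen/semigroup comparison with the entropy of $\xi_\varepsilon*m$ work, and that comparison should carry both the coercivity and the liminf steps.
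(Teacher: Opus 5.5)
Your architecture matches the paper's. Coercivity comes from the Jensen bound $\int\log(K_\varepsilon*m)\,dm\ge\int\log(\xi_\varepsilon*m)\,d(\xi_\varepsilon*m)$, an entropy--second-moment estimate and $U\ge C_0+C_1|x|^2$. The paper uses the linear form $-(2\pi/\delta)^{d/2}-\delta M_2$ with $\delta=C_1/4$ where you use the logarithmic Gaussian bound; both give coercivity uniformly in $\varepsilon\le 1$. Then come the direct method for fixed $\varepsilon$ and a liminf via the same Jensen bound and $\xi_\varepsilon*\mu_\varepsilon\rightharpoonup\mu$. You diverge at the limsup, and there you work much harder than needed. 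Since $K_\varepsilon*\nu$ is a probability density on $\mathbb{R}^d$,
\[
V^\sigma(\nu)-V^\sigma_\varepsilon(\nu)=\sigma\int\log\frac{\nu}{K_\varepsilon*\nu}\,d\nu=\sigma\operatorname{KL}(\nu\,|\,K_\varepsilon*\nu)\ge 0,
\]
so $V^\sigma_\varepsilon\le V^\sigma$ pointwise. This single inequality gives the uniform upper bound on $V^\sigma_\varepsilon(\mu_\varepsilon)$ (test with $\nu=\pi$) and the final comparison $V^\sigma_\varepsilon(\mu_\varepsilon)\le V^\sigma_\varepsilon(\nu)\le V^\sigma(\nu)$. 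No dominated convergence or density argument is needed. It also sidesteps a mismatch in your plan: a density argument naturally yields a diagonal recovery sequence, not the constant sequence you use at the end.

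Two of your sub-steps need repair.

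\emph{The $U$-shift in the liminf.} Your claim that replacing $\int U\,d\mu_\varepsilon$ by $\int U\,d(\xi_\varepsilon*\mu_\varepsilon)$ costs $O(\varepsilon)$ is not justified. The growth bounds only give $U*\xi_\varepsilon-U=O(1+|x|^2)$ away from compacts, and bounded second moments do not make $\int_{|x|>R}|x|^2\,d\mu_\varepsilon$ small. Skip the shift instead. First, $\liminf\int U\,d\mu_\varepsilon\ge\int U\,d\mu$ because $U$ is continuous and bounded below. Second, entropy is lsc along weakly convergent $\rho_n$ with $\sup_n M_2(\rho_n)\le M$: for the centred Gaussian $\gamma_s$ of variance $s$,
\[
\int\rho\log\rho=\operatorname{KL}(\rho\,|\,\gamma_s)-\tfrac{d}{2}\log(2\pi s)-\tfrac{1}{2s}M_2(\rho),
\]
and $\operatorname{KL}(\cdot\,|\,\gamma_s)$ is weakly lsc, so $\liminf_n\int\rho_n\log\rho_n\ge\int\rho\log\rho-M/(2s)$ for every $s>0$. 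The paper writes $\operatorname{KL}(\xi_\varepsilon*\mu_\varepsilon\,|\,\pi)$ at this point, silently making the same shift.

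\emph{Lower semicontinuity at fixed small $\varepsilon$.} You correctly note that the bound $\log(K_\varepsilon*m_n)(x)\ge -c_\varepsilon-|x|^2/(2\varepsilon^2)$ is not absorbed by $C_1|x|^2$. You also note that a minimizing sequence has only bounded, not convergent, second moments. The paper's cancellation of the quadratic $q$-terms after the generalized Fatou lemma tacitly assumes $\int q\,dm_n\to\int q\,dm$, so it has the same issue. Your Jensen fallback closes this if made precise. Take a continuous cutoff $\chi_R$. The part $\int\chi_R\log(K_\varepsilon*m_n)\,dm_n$ converges, by local uniform convergence and a uniform positive lower bound on $B_{2R}$ coming from tightness. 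For the tail, Jensen gives $\int(1-\chi_R)\log(K_\varepsilon*m_n)\,dm_n\ge\int\tau_n\log\tau_n$, where $\tau_n:=\xi_\varepsilon*((1-\chi_R)m_n)\le\xi_\varepsilon*m_n$. This $\tau_n$ has mass $\eta_n\le M/R^2$ and second moment at most $2M+2\varepsilon^2M_2(\xi)$. Hence $\int\tau_n\log\tau_n\ge(1+\tfrac{d}{2})\eta_n\log\eta_n-C\eta_n$, which tends to $0$ uniformly in $n$ as $R\to\infty$.

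Finally, the Jensen comparison does not use Gaussianity: any symmetric $\xi$ works, since $K_\varepsilon=\xi_\varepsilon*\xi_\varepsilon$ by definition. In the paper, Gaussianity enters only through the quadratic lower bound on $\log(K_\varepsilon*m_n)$ taken from Carrillo et al.
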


We will split the proof into the compact case (Theorem \ref{thm: weak_* cv of minimizers}) and the non-compact case (Theorem \ref{thm: weak_* cv of minimizers non compact}). We will follow \cite{Carrillo2019}.

\subsection{Existence of minimizers - Compact case}

\begin{proposition}\label{eq: V_varepsilon_lsc}
Suppose Assumptions \ref{hypo: F_full}, \ref{hypo: mollifier_kernel} and \ref{hypo: pi_full} hold. Then for all \(\varepsilon > 0\), the energy functional
\[
V^\sigma_\varepsilon(m) := F(m) + \sigma \int_{\mathcal{X}} \log \left( \frac{K_\varepsilon * m}{\pi} \right) \, dm
\]
is lower semi-continuous with respect to weak convergence in \(\mathcal{P}(\mathcal{X})\).
\end{proposition}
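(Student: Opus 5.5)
The plan is to treat the two parts of \(V^\sigma_\varepsilon\) separately. \(F\) is lower semi-continuous by Assumption \ref{hypo:F_full:iii}, and on compact \(\mathcal{X}\) weak convergence in \(\mathcal{P}(\mathcal{X})\) coincides with weak convergence in \(\mathcal{P}_2(\mathcal{X})\). For the entropic part I will show something stronger: the map
\[
G_\varepsilon(m) := \int_{\mathcal{X}} \log\big(K_\varepsilon * m\big)(x)\, m(dx) - \int_{\mathcal{X}} \log \pi(x)\, m(dx)
\]
is in fact continuous along weakly convergent sequences. A sum of a lower semi-continuous functional and a continuous one is lower semi-continuous, which will give the claim.

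The second term of \(G_\varepsilon\) is straightforward. By Assumption \ref{hypo:pi_full:i}, \(\log\pi = -U\) is continuous on the compact set \(\mathcal{X}\), hence bounded and continuous. Therefore \(\int \log\pi\, dm_n \to \int \log \pi\, dm\) whenever \(m_n \rightharpoonup m\).

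For the first term, set \(g_n := K_\varepsilon * m_n\) and \(g := K_\varepsilon * m\). First, since \(K_\varepsilon = \xi_\varepsilon*\xi_\varepsilon\) and \(\xi\) has full support, \(K_\varepsilon\) is continuous and strictly positive on the compact set \(\mathcal{X}-\mathcal{X}\). So there are constants \(0<k_{\min}\le K_\varepsilon\le k_{\max}\) there, and these bounds pass to \(g_n\) and \(g\). Consequently \(\log g_n\) and \(\log g\) are uniformly bounded. Second, \(K_\varepsilon\) is Lipschitz on \(\mathcal{X}-\mathcal{X}\): it is a convolution involving the Lipschitz function \(\xi_\varepsilon\), possibly after accounting for the normalization constant \(C_{\varepsilon,d}\). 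Hence the family \(\{g_n\}\) is equi-Lipschitz. Pointwise, \(g_n(x) = \int K_\varepsilon(x-y)\,m_n(dy) \to g(x)\), because \(y\mapsto K_\varepsilon(x-y)\) is bounded and continuous. Pointwise convergence of an equicontinuous family on a compact set upgrades to uniform convergence by Arzelà--Ascoli. Since \(\log\) is Lipschitz on \([k_{\min},\infty)\), it follows that \(\log g_n \to \log g\) uniformly on \(\mathcal{X}\).

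Finally I split
\[
\int \log g_n\, dm_n - \int \log g\, dm = \int (\log g_n - \log g)\, dm_n + \Big(\int \log g\, dm_n - \int \log g\, dm\Big).
\]
The first term is at most \(\|\log g_n - \log g\|_\infty \to 0\). The second term tends to zero by weak convergence, since \(\log g\) is bounded and continuous.

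The main obstacle is the uniform convergence step, in particular establishing the strictly positive lower bound on \(K_\varepsilon\). This is needed to control the logarithm, and it relies on the full support of \(\xi\) and the compactness of \(\mathcal{X}\); I would verify it directly from the definition of the convolution. On a non-compact space this step would fail, which is why the paper treats that case separately.
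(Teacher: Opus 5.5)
Your proof is correct, but it takes a different route from the paper's. The paper proves only lower semicontinuity of $m\mapsto\int\log(K_\varepsilon*m)\,dm$. It shifts the integrand by $\log K_{\min,\varepsilon}$ to make it nonnegative. It then applies the generalized Fatou lemma for weakly converging measures \citep{Feinberg2014}, which gives $\liminf_n\int f_n\,dm_n\ge\int\liminf_{n\to\infty,\,x'\to x}f_n(x')\,dm(x)$. Finally it identifies this joint liminf with $\log(K_\varepsilon*m)(x)$ by appealing to continuity of the kernel.

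You instead prove full continuity of the entropic term. You show $\log(K_\varepsilon*m_n)\to\log(K_\varepsilon*m)$ uniformly on $\mathcal{X}$, using equicontinuity plus pointwise convergence on a compact set, and then split the difference. This is more elementary and gives a stronger conclusion. It also makes explicit the uniform continuity of $K_\varepsilon$ on a compact set that the paper's one-line claim ($K_\varepsilon*m_n(x')\to K_\varepsilon*m(x)$ as $n\to\infty$ and $x'\to x$ jointly) tacitly relies on. For that equicontinuity you only need $K_\varepsilon$ to be uniformly continuous on $\mathcal{X}-\mathcal{X}$, so its Lipschitz property is not essential.

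What the paper's Fatou argument buys is robustness: it needs only a lower bound on the integrand. That is why the same proof carries over to $\mathbb{R}^d$ in the non-compact proposition. There $\log(K_\varepsilon*m_n)$ is only bounded below by a quadratic, $\log\pi$ is unbounded, and uniform convergence with bounded integrands is unavailable. So your argument depends on compactness, as you yourself note.
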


\begin{proof}
Let \((m_n)_{n \in \mathbb{N}} \subset \mathcal{P}(\mathcal{X})\) be a sequence of probability measures converging weakly to \(m \in \mathcal{P}(\mathcal{X})\). We aim to show
\[
\liminf_{n \to \infty} \left( F(m_n) + \sigma \int \log \left( \frac{K_\varepsilon * m_n}{\pi} \right) \, dm_n \right)
\geq F(m) + \sigma \int \log \left( \frac{K_\varepsilon * m}{\pi} \right) \, dm.
\]

Since \(F\) is lower semi-continuous and \(\pi\) is continuous and bounded from below by assumption we know that $m \mapsto F(m) - \sigma\int \log \pi dm$ is lower semi-continuous. Hence it suffices to establish
\[
\liminf_{n \to \infty} \int \log (K_\varepsilon * m_n) \, dm_n \geq \int \log (K_\varepsilon * m) \, dm.
\]

Note that for fixed \(\varepsilon > 0\), the function \(x \mapsto \log (K_\varepsilon * m_n(x))\) is continuous and bounded from below by \(\log K_{\min,\varepsilon}\), where $K_{\min,\varepsilon} := \inf_{x \in \mathcal{X}} K_\varepsilon(x)$. Therefore, we can apply a version of the generalized Fatou's lemma (see, e.g., \citet{Feinberg2014}), which yields:
\begin{equation} \label{eq: modified_Fatou}
\liminf_{n \to \infty} \int \log \left( \frac{K_\varepsilon * m_n(x)}{K_{\min,\varepsilon}} \right) \, dm_n(x) 
\geq \int \liminf_{n \to \infty, x' \to x} \log \left( \frac{K_\varepsilon * m_n(x')}{K_{\min,\varepsilon}} \right) \, dm(x).
\end{equation}

Moreover, for each \(x \in \mathcal{X}\), the convolution \(K_\varepsilon * m_n(x')\) converges pointwise to \(K_\varepsilon * m(x)\) as \(n \to \infty\) and \(x' \to x\), due to weak convergence and the continuity of the kernel. Therefore,
\begin{equation} \label{eq: pointwise_cv}
\liminf_{n \to \infty, x' \to x} \log \left( \frac{K_\varepsilon * m_n(x')}{K_{\min,\varepsilon}} \right) 
= \log \left( \frac{K_\varepsilon * m(x)}{K_{\min,\varepsilon}} \right).
\end{equation}

Combining \eqref{eq: modified_Fatou} and \eqref{eq: pointwise_cv}, we conclude that
\[
\liminf_{n \to \infty} \int \log (K_\varepsilon * m_n) \, dm_n \geq \int \log (K_\varepsilon * m) \, dm,
\]
which proves the lower semicontinuity of \(V^\sigma_\varepsilon\).
\end{proof}

\begin{proof}[Proof of Theorem \ref{thm: weak_* cv of minimizers} (Step 1: Existence of minimizers)]
For a compact domain $\mathcal X$, the argument is straightforward.
The energy $V^\sigma_\varepsilon$ is bounded from below.
Moreover, any sequence of probability measures (in particular, any minimizing sequence)
is automatically tight as a direct consequence of compactness.
Indeed, for any $p \ge 1$,
\begin{equation*}
    \int_{\mathcal X} |x|^p \, dm(x)
    \le
    \sup_{x \in \mathcal X} |x|^p .
\end{equation*}
Together with the lower semicontinuity established in
Proposition~\ref{eq: V_varepsilon_lsc},
this implies the existence of a weakly-$*$ convergent subsequence whose limit
is a minimizer of $V^\sigma_\varepsilon$.
\end{proof}

\subsection{Existence of minimizers - Non Compact case}

\begin{proposition}\label{eq: V_varepsilon_lsc}
Suppose Assumptions \ref{hypo: F_full}, \ref{hypo: mollifier_kernel}, \ref{hypo: pi_full} and \ref{hypo:pi_noncompact} hold and the kernel $\xi$ is Gaussian. Then for all \(\varepsilon > 0\), the energy functional
\[
V^\sigma_\varepsilon(m) := F(m) + \sigma \int_{\mathbb{R}^d} \log \left( \frac{K_\varepsilon * m}{\pi} \right) \, dm
\]
is lower semi-continuous with respect to weak convergence in \(\mathcal{P}_2(\mathbb{R}^d)\).
\end{proposition}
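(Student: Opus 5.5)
We follow the compact-case argument: $F$ is lower semi-continuous by Assumption \ref{hypo:F_full:iii}, so it suffices to treat the two pieces $-\sigma\int\log\pi\,dm = \sigma\int U\,dm$ and $\sigma\int\log(K_\varepsilon*m)\,dm$. Take $m_n\to m$ weakly in $\mathcal{P}_2(\mathbb{R}^d)$, i.e.\ weakly with convergence of second moments. Since $U$ is continuous with $|U(x)|\le C(1+|x|^2)$ by Assumption \ref{hypo:pi_noncompact}, the family $U$ is uniformly integrable along $(m_n)$. Hence $\int U\,dm_n\to\int U\,dm$; in fact, since $U$ is bounded below, Fatou alone gives the needed liminf inequality.

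The main difficulty is the term $\int\log(K_\varepsilon*m_n)\,dm_n$. On $\mathbb{R}^d$ the function $K_\varepsilon$ is not bounded below, so the compact argument with $K_{\min,\varepsilon}$ fails. We use the Gaussian structure. If $\xi$ is Gaussian, then $K_\varepsilon=\xi_\varepsilon*\xi_\varepsilon$ is a centered Gaussian, $K_\varepsilon(z)=c_\varepsilon e^{-|z|^2/(4\varepsilon^2)}$ up to the variance convention. By Jensen's inequality (concavity of $\log$),
\[
\log(K_\varepsilon*m_n)(x)\ge \log c_\varepsilon-\frac{1}{4\varepsilon^2}\int|x-y|^2\,dm_n(y)\ge \log c_\varepsilon-\frac{1}{2\varepsilon^2}\Big(|x|^2+\sup_k\int|y|^2dm_k\Big).
\]
The supremum of the second moments is finite because they converge. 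So the integrands are bounded below by $-C(1+|x|^2)$, and the upper bound $\log\|K_\varepsilon\|_\infty$ is trivial.

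Write $g_n:=\log(K_\varepsilon*m_n)+C(1+|x|^2)\ge0$. We apply the generalized Fatou lemma of \citet{Feinberg2014} to $g_n$ against $m_n\to m$:
\[
\liminf_n\int g_n\,dm_n\ge\int\liminf_{n,x'\to x}g_n(x')\,dm(x).
\]
Because $K_\varepsilon$ is bounded and continuous, weak convergence gives $K_\varepsilon*m_n(x')\to K_\varepsilon*m(x)>0$ as $n\to\infty$ and $x'\to x$. The uniform Lipschitz bound on $K_\varepsilon$ handles the moving point $x'$. The right-hand side therefore equals $\int\log(K_\varepsilon*m)\,dm+C\int(1+|x|^2)\,dm$.

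It remains to subtract $C\int(1+|x|^2)\,dm_n$. This quantity converges to $C\int(1+|x|^2)\,dm$ precisely because convergence in $\mathcal{P}_2$ includes convergence of second moments, so the liminf inequality for $\int\log(K_\varepsilon*m_n)\,dm_n$ follows. This subtraction is the step where the $\mathcal{P}_2$ topology, and not mere weak convergence, is essential. Combining the three liminf inequalities with $\sigma>0$ gives lower semi-continuity of $V^\sigma_\varepsilon$.
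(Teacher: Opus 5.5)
Your proof is correct and follows the paper's route. You shift $\log(K_\varepsilon*m_n)$ by a quadratic lower bound, apply the generalized Fatou lemma of \citet{Feinberg2014}, and then cancel the quadratic term using convergence of second moments. The only difference is that you derive the quadratic lower bound yourself via Jensen's inequality for the Gaussian $K_\varepsilon$ (using $\sup_k \int |y|^2\,dm_k(y)<\infty$), whereas the paper cites \cite{Carrillo2019}, Proposition 3.9; your version is self-contained and costs nothing extra, since the cancellation step already requires second-moment convergence.
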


\begin{proof}
Let \((m_n)_{n \in \mathbb{N}} \subset \mathcal{P}_2(\mathbb{R}^d)\) be a sequence of probability measures converging weakly to some \(m \in \mathcal{P}_2(\mathbb{R}^d)\). We aim to show
\[
\liminf_{n \to \infty} \left( F(m_n) + \sigma \int \log \left( \frac{K_\varepsilon * m_n}{\pi} \right) \, dm_n \right)
\geq F(m) + \sigma \int \log \left( \frac{K_\varepsilon * m}{\pi} \right) \, dm.
\]

Since \(F\) is lower semi-continuous and $\log\pi$ is integrable with respect to any $m \in \mathcal{P}_2(\mathbb{R}^d)$ due to the quadratic growth of $U$, it suffices to establish
\[
\liminf_{n \to \infty} \int \log \left( K_\varepsilon * m_n \right) \, dm_n \geq \int \log \left(K_\varepsilon * m\right) \, dm.
\]

Then we want to apply the generalized Fatou's Lemma (which holds for non negative functions). However, unlike in the compact case, when we work on $\mathbb{R}^d$ we do not have a lower bound for the kernel, hence we need a different approach in order to get a non negative function. From \cite{Carrillo2019}[Proposition 3.9] equation (54) we know that if $K$ is Gaussian, then there exists $x_0 \in \mathbb{R}^d$ and $C_0,C_1 \in \mathbb{R}$ such that for $n$ sufficient large we have
\begin{equation*}
    \log (K_\varepsilon * m_n)(x) \geq C_0 \lvert x - x_0  \rvert^2 +C_1.
\end{equation*}
If we denote the LHS by $f_n(x)$ and RHS by $q(x)$, we can use them to apply generalized Fatou and we have
\begin{equation}
\liminf_{n \to \infty} \int (f_n(x) -q(x)) dm_n(x) 
\geq \int \liminf_{n \to \infty, x' \to x} (f_n(x') - q(x')) \, dm(x).
\end{equation}

Since
\begin{equation*}
    \lim_{n\to\infty} \int_{\mathbb{R}^d} \big(-q(x)\big)\, dm_n(x)
=
\int_{\mathbb{R}^d} \big(-q(x)\big)\, dm(x)
=
\int_{\mathbb{R}^d} \liminf_{\substack{n\to\infty x'\to x}} \big(-q(x')\big)\, dm(x),
\end{equation*}
we can cancel the $q$ terms on both sides. This yields the same result as following.

We conclude that
\[
\liminf_{n \to \infty} \int \log (K_\varepsilon * m_n) \, dm_n \geq \int \log (K_\varepsilon * m) \, dm,
\]
which proves the lower semicontinuity of \(V^\sigma_\varepsilon\).
\end{proof}

\begin{lemma}\citep[Lemma 4.1]{Carrillo2016} \label{lemma: entropy_estimation}
    Suppose $\rho$ is a probability density on $\mathbb{R}^d$ with finite second moment $M_2(\rho) := \int_{\mathbb{R}^d} \lvert x \rvert^2\, \rho(x)dx$. Then for all $\delta>0$, we have
    \begin{equation*}
        \int_{\mathbb{R}^d} \log \rho(x) \rho(x)dx \geq -\left(\frac{2\pi}{\delta}\right)^{d/2} - \delta M_2(\rho),
    \end{equation*}
\end{lemma}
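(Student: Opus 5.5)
The plan is a Gaussian comparison that exploits the nonnegativity of relative entropy. Fix $\delta>0$ and let $g_\delta(x) := (\delta/\pi)^{d/2} e^{-\delta|x|^2}$. This is the normalized Gaussian density with normalizing constant $\int e^{-\delta|x|^2}dx = (\pi/\delta)^{d/2}$; here $\pi$ denotes the number $3.14\ldots$, not the reference measure. We may assume $\int \rho\log\rho$ is well defined. If its negative part diverged, the inequality would fail, so part of the argument is to show this cannot happen, and the bound below does exactly that.

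The key step is Gibbs' inequality $\operatorname{KL}(\rho\,|\,g_\delta)\ge 0$, that is,
\begin{equation*}
\int \rho\log\rho \,dx \ \ge\ \int \rho \log g_\delta\,dx = \frac{d}{2}\log\frac{\delta}{\pi} - \delta M_2(\rho).
\end{equation*}
To justify this rigorously when $\int\rho\log\rho$ might a priori be $-\infty$, I would apply Jensen's inequality to the concave function $\log$. We have $\int_{\{\rho>0\}} \rho \log(g_\delta/\rho) \le \log\int g_\delta \le 0$, where the integrand's positive part is controlled because $\log g_\delta$ is $\rho$-integrable thanks to $M_2(\rho)<\infty$. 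This shows the negative part of $\rho\log\rho$ is integrable.

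It then remains to compare $\frac d2\log(\delta/\pi)$ with the stated constant $-(2\pi/\delta)^{d/2}$, which is where the algebra has to be done carefully. The inequality $\frac d2\log(\delta/\pi)\ge -(2\pi/\delta)^{d/2}$ is not true for all $\delta$ in general; for instance, with $\delta$ large the left side is positive and fine, but the comparison still needs checking.

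The cleaner route, and the one I expect matches the stated constant, avoids the log altogether. Split $\rho\log\rho$ according to whether $\rho\ge e^{-\delta|x|^2/2}$ or not. The part where $\rho\ge 1$ is nonnegative. On $\{e^{-\delta|x|^2}\le\rho<1\}$, we have $\rho\log\rho\ge -\delta|x|^2\rho$, which integrates to at least $-\delta M_2(\rho)$. On $\{\rho<e^{-\delta|x|^2}\}$, use $t\log t\ge -C\sqrt t$ for $t\in[0,1]$ (indeed $-\sqrt t\log t\le 2/e\le 1$). This gives $\rho\log\rho\ge -\sqrt\rho\ge -e^{-\delta|x|^2/2}$, whose integral is $-(2\pi/\delta)^{d/2}$.

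Summing the three regions yields exactly $-(2\pi/\delta)^{d/2}-\delta M_2(\rho)$. The main obstacle is just verifying the elementary inequality $t\log t\ge-\sqrt t$ on $[0,1]$ and evaluating the Gaussian integral $\int e^{-\delta|x|^2/2}dx=(2\pi/\delta)^{d/2}$. Both are routine, so I would adopt this region-splitting argument as the proof.
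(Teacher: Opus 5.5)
Your adopted region-splitting argument is correct and is essentially the paper's proof: the same partition into $\{\rho\ge 1\}$, $\{e^{-\delta|x|^2}\le\rho<1\}$ and $\{\rho<e^{-\delta|x|^2}\}$, the same bound $t|\log t|\le\sqrt{t}$ on $(0,1]$, and the same Gaussian integral $\int e^{-\delta|x|^2/2}\,dx=(2\pi/\delta)^{d/2}$. You also attach the factor $\delta$ correctly to the middle region, whereas the paper's intermediate display misplaces it. Separately, your remark that the discarded Gibbs route fails for some $\delta$ is wrong, though harmless: with $s=(\delta/\pi)^{d/2}$ the needed inequality $\frac{d}{2}\log(\delta/\pi)\ge-(2\pi/\delta)^{d/2}$ reads $\log s+2^{d/2}/s\ge 0$, whose minimum over $s>0$ is $1+\frac{d}{2}\log 2>0$, so that route also proves the lemma, with a sharper constant.
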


\begin{proof}
We split the integration domain:
\begin{align*}
    \int_{\mathbb{R}^d} \log \rho(x) \rho(x)dx 
    &= \int_{\{\log \rho(x) \leq 0\}} \log \rho(x) \rho(x) dx + \int_{\{\log \rho(x) > 0\}} \log \rho(x) \rho(x)dx \\
    &\geq \int_{\{\log \rho(x) \leq 0\}} \log \rho(x) \rho(x)dx \\
    &= -\int_{\{\rho(x) \leq 1\}} \left\lvert \log \rho(x) \right\rvert \rho(x)dx \\
    &= - \int_{\{\rho(x) \leq e^{-\delta |x|^2}\}} \left \lvert\log \rho(x) \right \rvert  \rho(x)dx 
    - \int_{\{e^{-\delta |x|^2} \leq \rho(x) \leq 1\}} \left \lvert \log \rho(x) \right \rvert  \rho(x)dx \\
    &\geq -\delta \int_{\{\rho(x) \leq e^{-\delta |x|^2}\}}\sqrt{\rho(x)} dx
    - \int_{\{e^{-\delta |x|^2} \leq \rho(x) \leq 1\}} |x|^2 \rho(x)dx .
\end{align*}
where we get the last inequality by using the fact that $x|\log(x)| \leq \sqrt{x}$ for $x\in (0,1]$. For the first term, since $\rho(x) \leq 1$, we estimate:
\begin{equation*}
\int_{\{\rho(x) \leq e^{-\delta |x|^2}\}} \sqrt{\rho(x)} dx 
\leq \int_{\mathbb{R}^d} e^{-\delta |x|^2 / 2}\, dx = \left( \frac{2\pi}{\delta} \right)^{d/2}.
\end{equation*}
For the second term, note that
\begin{equation*}
\int_{\{ e^{-\delta |x|^2} \leq \rho(x) \leq 1\}} |x|^2 \rho(x)dx \leq M_2(\rho).
\end{equation*}

Combining the estimates:
\begin{equation*}
\int_{\mathbb{R}^d} \log \rho(x) \rho(x)dx \geq - \delta M_2(\rho) - \left( \frac{2\pi}{\delta} \right)^{d/2}.
\end{equation*}
\end{proof}

\begin{proposition}\label{prop: KL_with_kernel_lower_bound}
    Suppose Assumptions \ref{hypo: F_full}, \ref{hypo: mollifier_kernel} and \ref{hypo: pi_full} hold.
    Then we have for all $\delta > 0$
    \begin{equation}
         \frac{1}{\sigma}V^\sigma_\varepsilon(m) \geq \frac{1}{\sigma}F_{\min} +C_0 +C_1M_2(m)-(2\pi/\delta)^{d/2} - 2\delta \left( M_2(m) + \varepsilon^2 M_2(\xi) \right)
    \end{equation}
\end{proposition}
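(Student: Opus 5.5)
Since $\xi_\varepsilon$ is a probability density, so is $K_\varepsilon=\xi_\varepsilon*\xi_\varepsilon$. We are in the non-compact setting, $\mathcal{X}=\mathbb{R}^d$ with Assumption \ref{hypo:pi_noncompact}, which is where $C_0,C_1$ come from. The plan is to bound $\frac{1}{\sigma}V^\sigma_\varepsilon(m)$ term by term. Write
\[
\frac{1}{\sigma}V^\sigma_\varepsilon(m)=\frac{1}{\sigma}F(m)+\int U\,dm+\int \log(K_\varepsilon*m)\,dm .
\]
The first term is at least $\frac1\sigma F_{\min}$ by Assumption \ref{hypo:F_full:iv}. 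The second is at least $C_0+C_1M_2(m)$ by the lower growth bound $U(x)\ge C_0+C_1|x|^2$. All the work is in the third term, which is a cross-entropy between $m$ and the smoothed density $K_\varepsilon*m$ rather than an entropy.

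\textbf{Step 1: reduce the cross term to an entropy.} I would use Jensen's inequality (concavity of $\log$) to pass through a density and then apply Lemma \ref{lemma: entropy_estimation}. Write $K_\varepsilon*m(x)=\int\xi_\varepsilon(x-z)\,(\xi_\varepsilon*m)(z)\,dz$ and set $\rho:=\xi_\varepsilon*m$. Then
\[
\int\log(K_\varepsilon*m)\,dm=\int\log\big((\xi_\varepsilon*\rho)(x)\big)\,m(dx)=\int \log\Big(\int \xi_\varepsilon(x-z)\rho(z)\,dz\Big) m(dx).
\]
The cleaner route is a Gibbs/cross-entropy inequality in two stages.
\begin{itemize}
\item By radial symmetry of $\xi_\varepsilon$ and Fubini, $\int \log(K_\varepsilon*m)\,dm$ is bounded below by $\int(\xi_\varepsilon*\log(K_\varepsilon*m))\,dm=\int \log(K_\varepsilon*m)\,\rho\,dx$. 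To get this, apply Jensen to $\log$ inside $K_\varepsilon*m(x)=\int \xi_\varepsilon(x-y)\rho(y)\,dy$, combined with the symmetric rewriting $K_\varepsilon*m=\xi_\varepsilon*\rho$.
\item Then use Gibbs' inequality $\int\rho\log q\ge\int \rho\log\rho-\mathrm{KL}(\rho|q)$. Alternatively, bound directly using $\int\log(\xi_\varepsilon*\rho)\,\rho\ge \int\log\rho\,(\xi_\varepsilon*\rho)$, which follows from Jensen on $\rho\mapsto\rho\log\rho$.
\end{itemize}
The target intermediate statement is
\[
\int\log(K_\varepsilon*m)\,dm\ \ge\ \int \eta\log\eta\,dx\quad\text{for some probability density } \eta\in\{\xi_\varepsilon*m,\ K_\varepsilon*m\}.
\]

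\textbf{Step 2: apply the entropy lemma and bound the second moment.} Apply Lemma \ref{lemma: entropy_estimation} to $\eta$ to get
\[
\int\eta\log\eta\ge-(2\pi/\delta)^{d/2}-\delta M_2(\eta).
\]
Then bound $M_2(\eta)$. If $\eta=\mu*\xi_\varepsilon*\cdots$, write the sample as $X+\varepsilon Y_1(+\varepsilon Y_2)$ with independent centered $Y_j\sim\xi$ (centered because $\xi$ is radial). Since the cross terms vanish, $M_2(\xi_\varepsilon*m)=M_2(m)+\varepsilon^2M_2(\xi)$. Using $|a+b|^2\le2|a|^2+2|b|^2$ instead gives $M_2(\eta)\le 2(M_2(m)+\varepsilon^2M_2(\xi))$, which matches the factor $2\delta$ in the statement. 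Summing the three bounds gives the claim.

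The main obstacle is Step 1: making rigorous the inequality between the cross-entropy $\int\log(K_\varepsilon*m)\,dm$ and a genuine entropy, since $m$ may be singular. I would first try the following Jensen argument. For each $x$,
\[
\log(\xi_\varepsilon*\rho)(x)\ge\int\xi_\varepsilon(x-z)\log\rho(z)\,dz .
\]
Integrating against $m$ and using Fubini gives $\int \rho\log\rho\,dz$ with $\rho=\xi_\varepsilon*m$. This holds once we know $\log\rho$ is integrable, which can be arranged by truncation and monotone convergence.
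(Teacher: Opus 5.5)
Your ``main obstacle'' paragraph is the paper's proof: bound $F\ge F_{\min}$ and $\int U\,dm\ge C_0+C_1M_2(m)$, apply Jensen to $\log(\xi_\varepsilon*\rho)$ with $\rho=\xi_\varepsilon*m$, use Fubini and the symmetry of $\xi_\varepsilon$ to reach $\int\rho\log\rho$, then apply Lemma \ref{lemma: entropy_estimation} with $M_2(\rho)\le 2M_2(m)+2\varepsilon^2M_2(\xi)$. You are also right that $C_0,C_1$ come from Assumption \ref{hypo:pi_noncompact}, which the statement omits. The alternatives in your bullet list are garbled (Jensen gives $\xi_\varepsilon*\log\rho$, not $\xi_\varepsilon*\log(K_\varepsilon*m)$) and should simply be dropped.
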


\begin{proof}
    We begin by rewriting the regularised energy as the sum of three terms:
    \begin{equation*}
        V^\sigma_\varepsilon(m) = F(m) + \sigma \int \log K_\varepsilon* m(x) d m(x) -\sigma \int \log \pi(x) d m(x).
    \end{equation*}
    We estimate each term separately. First, thanks to Assumption \ref{hypo:F_full:iv}
    \begin{equation*}
        F(m) \geq F_{\min}.
    \end{equation*}
    Next, for the third term, note that since $\pi(x) = e^{-U(x)}$, we have
    \begin{equation*}
        - \int \log \pi(x) d m(x) = \int U(x) dm(x) \geq \int C_0 +C_1|x|^2 dm(x) = (C_0 +C_1 M_2(m))
    \end{equation*}
    For the second term, we have for all $\delta > 0$
    \begin{align*}
        \int \log(K_\varepsilon * m )(x)  dm(x) &= \int \log(\xi_\varepsilon *(\xi_\varepsilon* m) )(x)  dm(x)\\
        & = \int \log \left( \int \xi_\varepsilon(y) (\xi_\varepsilon* m) (x-y)dy\right) dm(x)\\
        & \geq \int \left( \int \xi_\varepsilon(y) \log (\xi_\varepsilon* m (x-y))dy\right) dm(x)\\
        &= \int \xi_\varepsilon * \log(\xi_\varepsilon*m)(x) dm(x)\\
        &= \int \log(\xi_\varepsilon*m)(x)  \xi_\varepsilon *m(x)dx \\
        &\geq -(2\pi/\delta)^{d/2} -\delta M_2(\xi_\varepsilon * m).
    \end{align*}
    where the first inequality is from Jensen's inequality and the last inequality is from Lemma \ref{lemma: entropy_estimation}. To control the second moment of the convolution, observe that
    \begin{equation*}
        M_2(\xi_\varepsilon * m) = \int |x|^2 \xi_\varepsilon*m(x)dx 
        = \int |x|^2 \int \xi_\varepsilon(x-y) d m(y) dx\\
        = \iint |y+z|^2 \xi_\varepsilon(z) dz dm(y)
    \end{equation*}
    where in the last step we applied change of variable $z = x-y$. Then by using the inequality \(|x + y|^2 \leq 2|x|^2 + 2|y|^2\) we have

    \begin{align*}
        M_2(\xi_\varepsilon * m)& \leq 2 \iint |z|^2 \xi_\varepsilon(z) dz dm(y) + 2 \iint |y|^2 \xi_\varepsilon(z)dz dm(y)\\
        & = 2 \int |z|^2 \xi_\varepsilon(z) dz + 2 \int |y|^2 dm(y)\\
        & \leq 2\varepsilon^2M_2(\xi)  + 2 M_2(m).
    \end{align*}
    where we used the remark \ref{remark: moment xi varepsilon} in the last step.

    Combining all the above estimates, we conclude that \(V^\sigma_\varepsilon(m)\) is bounded from above by a constant depending on \(\sigma\), \(F_{\min}\), \(\delta\), \(M_2(\xi)\), and \(M_2(m)\).
\end{proof}

\begin{proof}[Proof of Theorem \ref{thm: weak_* cv of minimizers non compact} (Step 1: Existence of minimizers)]

For the non-compact domain $\mathbb{R}^d$, Proposition~\ref{prop: KL_with_kernel_lower_bound} plays a twofold role.
First, it ensures that the energy $V^\sigma_\varepsilon$ is bounded from below
for all $m \in \mathcal P_2(\mathbb{R}^d)$, so that minimizing sequences are well defined.
Second, for any minimizing sequence, choosing $\delta = C_1/4$
makes the coefficient of $M_2(m)$ positive, yielding the estimate
\begin{equation*}
    \frac{1}{\sigma} V^\sigma_\varepsilon(m)
    \ge
    \frac{1}{\sigma} F_{\min}
    + C_0
    + \frac{C_1}{2} M_2(m)
    - (4\pi / C_1)^{d/2}
    - C_1 \varepsilon^2 M_2(\xi).
\end{equation*}
As a consequence, any minimizing sequence $(m_n)_n$
has uniformly bounded second moments.
By Prokhorov's theorem and the lower semicontinuity of $V^\sigma_\varepsilon$,
there exists a weakly-$*$ convergent subsequence whose limit is a minimizer.
\end{proof}

\subsection{Convergence of minimizers (for both cases)}

\begin{lemma}\label{lemma: weak_cv_minimizers}
Let $\mathcal{X} \subset \mathbb{R}^d$ be compact and $\xi$ satisfy Assumption \ref{hypo: mollifier_kernel}, or $\mathcal{X} = \mathbb{R}^d$ and $\xi$ be a Gaussian kernel. 
Let $(\mu_\varepsilon)_{\varepsilon>0}$
be a sequence in $\mathcal{P}(\mathcal{X})$ such that
$\mu_\varepsilon \stackrel{*}{\rightharpoonup} \mu$ as $\varepsilon \to 0$
for some $\mu \in \mathcal{P}(\mathcal{X})$. 
Then
\[
\xi_\varepsilon * \mu_\varepsilon \stackrel{*}{\rightharpoonup} \mu,
\]
where the convergence $\stackrel{*}{\rightharpoonup}$ is understood in the
bounded-Lipschitz sense, i.e.\ tested against all bounded Lipschitz
functions $f:\mathcal{X} \to \mathbb{R}$.

\end{lemma}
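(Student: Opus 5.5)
Fix a bounded Lipschitz test function $f:\mathcal{X}\to\mathbb{R}$ with $\|f\|_\infty\le M$ and Lipschitz constant $L$. The plan is to split
\[
\int f\, d(\xi_\varepsilon*\mu_\varepsilon) - \int f\, d\mu
= \Big(\int f\, d(\xi_\varepsilon*\mu_\varepsilon) - \int f\, d\mu_\varepsilon\Big) + \Big(\int f\, d\mu_\varepsilon - \int f\, d\mu\Big).
\]
The second bracket tends to zero directly from the assumed convergence $\mu_\varepsilon \stackrel{*}{\rightharpoonup}\mu$, since bounded Lipschitz functions are admissible test functions; on compact $\mathcal{X}$ they are continuous, and on $\mathbb{R}^d$ they are bounded continuous. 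So all the work is in the first bracket, which measures how far the mollification moves mass. I intend to bound it uniformly in $\mu_\varepsilon$.

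For the first bracket I would use Fubini to write
\[
\int f\, d(\xi_\varepsilon*\mu_\varepsilon) = \int\!\!\int f(x)\,\xi_\varepsilon(x-y)\,dx\, d\mu_\varepsilon(y),
\]
so that the difference becomes $\int g_\varepsilon(y)\, d\mu_\varepsilon(y)$ with
\[
g_\varepsilon(y) := \int \xi_\varepsilon(x-y)\big(f(x)-f(y)\big)\,dx + f(y)\Big(\int\xi_\varepsilon(x-y)\,dx - 1\Big).
\]
In the case $\mathcal{X}=\mathbb{R}^d$ with $\xi$ Gaussian, the normalisation constant is $C_{\varepsilon,d}=1$ and the second term vanishes. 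The first term is bounded by $L\int|z|\xi_\varepsilon(z)\,dz = L\varepsilon\int|u|\xi(u)\,du \le L\varepsilon\, M_2(\xi)^{1/2}$, which tends to $0$ uniformly in $y$. This finishes the non-compact case.

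The compact case is the main obstacle, because $\xi_\varepsilon$ is renormalised on $\mathcal{X}$ and $\int_{\mathcal{X}}\xi_\varepsilon(x-y)\,dx$ need not equal $1$ for $y$ near the boundary. My first attempt will be to interpret $\xi_\varepsilon*\mu_\varepsilon$ as $\int_{\mathcal{X}}\xi_\varepsilon(\cdot-y)\,d\mu_\varepsilon(y)$, which has total mass $\int_{\mathcal{X}}\xi_\varepsilon(x-y)\,dx$ integrated against $\mu_\varepsilon$. I would then control
\[
\int_{\mathcal{X}}|x-y|\,\xi_\varepsilon(x-y)\,dx \le C_{\varepsilon,d}^{-1}\,\varepsilon\int_{\mathbb{R}^d}|u|\,\xi(u)\,du,
\]
after showing that $C_{\varepsilon,d}$ stays bounded below as $\varepsilon\to0$. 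For the mass-defect term I need $\sup_y\big|\int_{\mathcal{X}}\xi_\varepsilon(x-y)\,dx-1\big|\to0$, or at least that it tends to zero in $\mu_\varepsilon$-average. This is where a regularity assumption on $\mathcal{X}$ is unavoidable: if $\mu$ charges the boundary, the defect persists. I would therefore either read $\xi_\varepsilon*\mu_\varepsilon$ as the normalised kernel $y\mapsto \xi_\varepsilon(\cdot-y)/\int_{\mathcal{X}}\xi_\varepsilon(x-y)\,dx$, which is a Markov kernel, or accept the convention implicit in Assumption \ref{hypo: mollifier_kernel} that the mollified measures are probability measures. With a Markov kernel the second term in $g_\varepsilon$ disappears.

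It then remains to show that the kernel concentrates, namely
\[
\sup_y\int_{\mathcal{X}}|x-y|\,\tilde\xi_\varepsilon(x,y)\,dx\to0,
\]
where $\tilde\xi_\varepsilon$ denotes the normalised kernel. I would prove this by splitting into $|x-y|\le\sqrt{\varepsilon}$, which contributes at most $\sqrt{\varepsilon}$, and $|x-y|>\sqrt{\varepsilon}$. The far region contributes at most $\mathrm{diam}(\mathcal{X})$ times the tail ratio, and I would bound that ratio using $\xi$ radial with a finite second moment together with a lower bound on the local mass $\int_{\mathcal{X}\cap B(y,\varepsilon)}\xi_\varepsilon(x-y)\,dx$. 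That lower bound in turn needs an interior-cone-type condition on $\mathcal{X}$, which I expect to be the genuinely delicate step to justify.
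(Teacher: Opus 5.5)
\textbf{Non-compact case.} Your argument for $\mathcal{X}=\mathbb{R}^d$ is exactly the paper's. The paper makes the same split and bounds the first bracket by $\|\nabla f\|_{L^\infty}M_1(\xi_\varepsilon)\le \|\nabla f\|_{L^\infty}\,\varepsilon M_1(\xi)$.

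\textbf{Compact case: what the paper does.} The paper reuses the identical computation. It writes the first bracket as $\iint (f(x)-f(y))\,\xi_\varepsilon(x-y)\,dy\,d\mu_\varepsilon(x)$ and bounds it by $\|\nabla f\|_{L^\infty}\,\varepsilon M_1(\xi)/C_{\varepsilon,d}$. That identity tacitly assumes $\int_{\mathcal X}\xi_\varepsilon(x-y)\,dy=1$ for every $x$, together with the change of variables $z=x-y$ over $\mathcal{X}$. In other words, it needs a translation-invariant, mass-preserving convolution. This is the periodic (torus) convolution on $[-L,L]^d$ that the paper's kernel construction presupposes: that construction claims that ``its convolution is also a probability density on $\mathcal{X}$'', and the moment remark integrates over $[-L/\varepsilon,L/\varepsilon]^d$.

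\textbf{Your objection is correct.} For the convolution restricted to a general compact $\mathcal{X}$, the statement is false. Take $d=1$, $\mathcal X=[-1,1]$, $\mu_\varepsilon=\mu=\delta_1$ and $f\equiv1$. Since $\xi$ is even and $C_{\varepsilon,1}\to1$, you get $\int_{\mathcal X}\xi_\varepsilon(x-1)\,dx\to\tfrac12\neq1$.

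\textbf{Where your proposal falls short.} Your proposal does not prove the lemma as written in the compact case either. It replaces $\xi_\varepsilon*\mu_\varepsilon$ by a different object (the normalised Markov kernel) and adds a hypothesis (an interior cone condition). It also merges two distinct repairs, which should be kept separate.

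\emph{Repair 1: periodic convention.} If you adopt the mass-preserving convention, the second term of $g_\varepsilon$ vanishes identically. The first term is then at most $\operatorname{Lip}(f)\int|z|\,\xi_\varepsilon(z)\,dz\le \operatorname{Lip}(f)\,\varepsilon M_1(\xi)/C_{\varepsilon,d}$, where $C_{\varepsilon,d}=\int_{[-L/\varepsilon,L/\varepsilon]^d}\xi\to1$. This is precisely the paper's one-liner. No cone condition and no $\sqrt{\varepsilon}$-splitting are needed, though the test functions must then be Lipschitz for the periodic metric.

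\emph{Repair 2: normalised kernel on a non-periodic domain.} Here your ``delicate step'' closes quickly. A uniform interior cone of height $h$ gives, for $\varepsilon\le h$, local mass at least $c\,C_{\varepsilon,d}^{-1}$. The constant $c>0$ is the $\xi$-mass of the unit-height cone, independent of the cone's direction because $\xi$ is radial. Chebyshev bounds the mass beyond radius $\sqrt\varepsilon$ by $C_{\varepsilon,d}^{-1}\varepsilon M_2(\xi)$. Hence the far region contributes at most $\operatorname{diam}(\mathcal X)\,\varepsilon M_2(\xi)/c$, uniformly in $y$.

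However, the result is then a statement about a measure other than $\xi_\varepsilon*\mu_\varepsilon$. Its use in the $\Gamma$-liminf would have to be redone: the Jensen step comparing $\int\log(K_\varepsilon*\mu)\,d\mu$ with $\operatorname{KL}(\xi_\varepsilon*\mu\,|\,\pi)$ relies on the paper's convolution.

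\textbf{A further point.} Your step ``$C_{\varepsilon,d}$ stays bounded below'' requires $0\in\operatorname{int}\mathcal X$. It is not automatic for a general compact set.
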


\begin{proof}
    Suppose $f$ be a bounded Lipschitz function. We have
    \begin{equation*}
        \left \lvert \int f d(\xi_\varepsilon * \mu_\varepsilon) - \int f d \mu \right \rvert \leq \left \lvert \int f d(\xi_\varepsilon * \mu_\varepsilon) - \int f d \mu_\varepsilon \right \rvert +\left \lvert \int f d \mu_\varepsilon - \int f d \mu \right \rvert.
    \end{equation*}
    The second term converges to $0$, and for the first term we have
    \begin{align*}
        \left \lvert \int f d(\xi_\varepsilon * \mu_\varepsilon) - \int f d \mu_\varepsilon \right \rvert &= \left \lvert \int \int (f(x) - f(y)) \xi_\varepsilon(x-y) dy d \mu_\varepsilon(x) \right \rvert\\
        & \leq \lVert \nabla f\rVert_{L^\infty} \left \lvert \int \int \lvert x-y\rvert \xi_\varepsilon(x-y) dy d\mu_\varepsilon(x)\right \rvert\\
        & = \lVert \nabla f\rVert_{L^\infty}  \int \int \left\lvert z \right\rvert \xi_\varepsilon\left(z\right) dz d \mu_\varepsilon(x)\\
        & = \lVert \nabla f\rVert_{L^\infty} M_1(\xi_\varepsilon).
    \end{align*}
    In the last inequality, the term $M_1(\xi_\varepsilon)$ can be bounded by $\frac{\varepsilon}{C_{\varepsilon,d}}M_1(\xi)$ or $\varepsilon M_1(\xi)$ depending on the the space $\mathcal{X}$ that we work with. Thus the first term also converges to $0$.
\end{proof}

\begin{theorem}\label{thm: GammaCV}
    Suppose $F$ is lower semi-continuous. Our energy $V_\varepsilon^\sigma$ $\Gamma$-converges to $V^\sigma$ in the following sense:\\
    For $(\mu_\varepsilon)_\varepsilon \subset \mathcal{P}(\mathcal{X})$ and $\mu \in \mathcal{P}(\mathcal{X})$,
    \begin{itemize}
        \item if $\mu_\varepsilon \stackrel{*}{\rightharpoonup} \mu$, we have 
        \begin{equation*}
            \liminf_{\varepsilon \to 0} V_\varepsilon^\sigma(\mu_\varepsilon) \geq V^\sigma(\mu)
        \end{equation*}
        \item 
        \begin{equation*}
            \limsup_{\varepsilon \to 0} V_\varepsilon^\sigma(\mu) \leq V^\sigma(\mu)
        \end{equation*}
    \end{itemize}
\end{theorem}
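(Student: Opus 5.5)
We prove the two inequalities separately. In both, $F$ is handled directly and the real work is on the entropic part. The key identity is $K_\varepsilon = \xi_\varepsilon * \xi_\varepsilon$ with $\xi_\varepsilon$ symmetric (radial). This gives
\[
\int \log\!\Big(\frac{K_\varepsilon * m}{\pi}\Big)\,dm
= \int \log(\xi_\varepsilon*(\xi_\varepsilon*m))\,dm - \int \log\pi \, dm .
\]
We may assume $\liminf_\varepsilon V^\sigma_\varepsilon(\mu_\varepsilon)<\infty$ and pass to a subsequence realizing the liminf.

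\textbf{Liminf inequality.} Let $\mu_\varepsilon \stackrel{*}{\rightharpoonup}\mu$. Lower semicontinuity of $F$ gives $\liminf F(\mu_\varepsilon)\ge F(\mu)$.

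For the entropic term we use the Jensen step from the proof of Proposition \ref{prop: KL_with_kernel_lower_bound}. Concavity of $\log$ and symmetry of $\xi_\varepsilon$ give
\[
\int \log(K_\varepsilon*\mu_\varepsilon)\,d\mu_\varepsilon \;\ge\; \int \log(\xi_\varepsilon*\mu_\varepsilon)\,(\xi_\varepsilon*\mu_\varepsilon)\,dx .
\]
The right-hand side is the Boltzmann entropy of $\rho_\varepsilon:=\xi_\varepsilon*\mu_\varepsilon$. Combining with the $-\int\log\pi$ term, we want
\[
\liminf_\varepsilon \Big[\int \rho_\varepsilon\log\rho_\varepsilon\,dx - \int \log\pi\, d\mu_\varepsilon\Big]\ \ge\ \mathrm{KL}(\mu|\pi).
\]
By Lemma \ref{lemma: weak_cv_minimizers}, $\rho_\varepsilon \stackrel{*}{\rightharpoonup}\mu$. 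The term $\int\log\pi\,d\mu_\varepsilon$ converges to $\int\log\pi\,d\mu$ by weak convergence: on compact $\mathcal X$, $\log\pi=-U$ is continuous and bounded. In the non-compact case, $U$ grows quadratically, and we would need uniform second moments from Proposition \ref{prop: KL_with_kernel_lower_bound} plus uniform integrability; this is the place where convergence only in weak-* could cause trouble.

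The Boltzmann entropy is weakly lower semicontinuous. On compact $\mathcal X$, write it as $\mathrm{KL}(\rho_\varepsilon|\mathrm{Leb}_{\mathcal X})$ up to a constant and use joint lower semicontinuity of KL via the Donsker–Varadhan formula. On $\mathbb{R}^d$, write it as $\mathrm{KL}(\rho_\varepsilon|\gamma)$ for a Gaussian $\gamma$ minus a second-moment term, controlled by the moment bounds. Either way we get $\liminf\int\rho_\varepsilon\log\rho_\varepsilon \ge \int\mu\log\mu$, with value $+\infty$ if $\mu$ is not absolutely continuous, consistent with $\mathrm{KL}=\infty$. Together these give the first bullet.

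\textbf{Limsup inequality.} Fix $\mu$ and assume $V^\sigma(\mu)<\infty$, so $\mu$ has a density with $\mathrm{KL}(\mu|\pi)<\infty$ (otherwise there is nothing to prove). $F(\mu)$ does not depend on $\varepsilon$, so we must show
\[
\limsup_\varepsilon\int \mu\log(K_\varepsilon*\mu)\,dx\le\int\mu\log\mu\,dx .
\]
By Gibbs' inequality,
\[
\int \mu\log(K_\varepsilon*\mu) = \int\mu\log\mu - \mathrm{KL}(\mu\,|\,K_\varepsilon*\mu) \le \int\mu\log\mu .
\]
Here $K_\varepsilon*\mu$ must be a probability density on $\mathcal X$. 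This holds on $\mathbb{R}^d$. On compact $\mathcal X$ the normalization $C_{\varepsilon,d}$ and the truncation of the convolution to $\mathcal X$ need care: if the mass of $K_\varepsilon*\mu$ is $Z_\varepsilon$, we get the bound $\int\mu\log\mu+\log Z_\varepsilon$, and we show $\limsup\log Z_\varepsilon\le 0$ using the normalization of $\xi_\varepsilon$ on $\mathcal X$. This already gives the inequality for every $\varepsilon$, even without taking a limit.

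\textbf{Main obstacle.} We expect the hardest step to be the liminf bound on the entropy part. One issue is justifying the Jensen-plus-symmetry swap on a compact domain where $\xi_\varepsilon$ is renormalized and convolutions leak outside $\mathcal X$. The other is the lower semicontinuity of the Boltzmann entropy on $\mathbb{R}^d$, which requires uniform moment control. If that control is not available for arbitrary sequences, we would restrict the statement to sequences with bounded second moments, which the application to minimizers provides.
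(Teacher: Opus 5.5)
Your proof is correct and follows the paper's route. The liminf uses the Jensen/symmetry step to reduce $\int\log(K_\varepsilon*\mu_\varepsilon)\,d\mu_\varepsilon$ to the entropy of $\xi_\varepsilon*\mu_\varepsilon$, then lower semicontinuity together with Lemma~\ref{lemma: weak_cv_minimizers}, and the limsup uses $\operatorname{KL}(\mu\,|\,K_\varepsilon*\mu)\ge 0$. Keeping $-\int\log\pi\,d\mu_\varepsilon$ separate is more careful than the paper's direct passage to $\operatorname{KL}(\xi_\varepsilon*\mu_\varepsilon|\pi)$. On $\mathbb{R}^d$ your moment concern is moot: along any finite-energy subsequence, Proposition~\ref{prop: KL_with_kernel_lower_bound} already bounds the second moments, and the $\pi$-term only needs $\liminf_\varepsilon\int U\,d\mu_\varepsilon\ge\int U\,d\mu$, which is plain lower semicontinuity.
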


\begin{proof}
    Since for all $\mu$
    \begin{equation*}
        \int \log(K_\varepsilon * \mu )  d\mu \geq \int \xi_\varepsilon * \log(\xi_\varepsilon*\mu) d\mu= \int \log(\xi_\varepsilon*\mu) d \xi_\varepsilon *\mu,
    \end{equation*}
    we have
    \begin{align*}
        \liminf_{\varepsilon \to 0} V_\varepsilon^\sigma(\mu_\varepsilon) \geq \liminf_{\varepsilon \to 0} \left(F(\mu_\varepsilon) +\sigma \operatorname{KL}(\xi_\varepsilon * \mu_\varepsilon |\pi) \right) \geq V^\sigma(\mu)
    \end{align*}
    where the last inequality used l.s.c of $F$ and $\operatorname{KL}$ as well as the weak convergence of $\xi_\varepsilon * \mu_\varepsilon$ that we got from Lemma \ref{lemma: weak_cv_minimizers}. Hence the first item is proven. We have the second item because
    \begin{equation*}
        V^\sigma(\mu) - V_\varepsilon^\sigma(\mu) = \sigma\operatorname{KL}(\mu | K_\varepsilon * \mu) \geq 0.
    \end{equation*}
\end{proof}

\begin{proof}[Proof of Theorem~\ref{thm: weak_* cv of minimizers} and Theorem \ref{thm: weak_* cv of minimizers non compact} (Step 2: Convergence)]
For any $\varepsilon>0$, since $\mu_\varepsilon$ is a minimizer of $V_\varepsilon^\sigma$,
we have for all $\nu$
\[
V_\varepsilon^\sigma(\mu_\varepsilon)\le V_\varepsilon^\sigma(\nu).
\]
Consequently,
\[
\liminf_{\varepsilon\to0}V_\varepsilon^\sigma(\mu_\varepsilon)
\le
\limsup_{\varepsilon\to0}V_\varepsilon^\sigma(\nu)
\le
V^\sigma(\nu),
\]
where the last inequality follows from Theorem~\ref{thm: GammaCV}.
Since there exists $\nu$ such that $V^\sigma(\nu)<\infty$,
the sequence $(V_\varepsilon^\sigma(\mu_\varepsilon))_{\varepsilon>0}$ is uniformly bounded
from above.

Moreover, by the quadratic growth estimate established above, there exist constants
$C_0\in\mathbb R$ and $C_1>0$, independent of $\varepsilon$, such that
\[
V_\varepsilon^\sigma(\mu_\varepsilon)
\ge
F_{\min}
+ C_0
+ \frac{C_1}{2} M_2(\mu_\varepsilon)
- (4\pi/C_1)^{d/2}
- C_1 \varepsilon^2 \,M_2(\xi).
\]
Since the left-hand side is uniformly bounded from above and the last term is
uniformly bounded as $\varepsilon\to0$, it follows that
$(M_2(\mu_\varepsilon))_{\varepsilon>0}$ is uniformly bounded.

If $\mathcal X$ is compact, tightness of $(\mu_\varepsilon)_{\varepsilon>0}$
is immediate.
If we work with $\mathbb{R}^d$, the uniform second-moment bound implies tightness. Therefore, up to a subsequence, $\mu_\varepsilon\rightharpoonup\mu$ weakly
for some $\mu\in\mathcal P_2(\mathcal X)$.

By the $\Gamma$-convergence, we conclude that for all $\nu$
\[
V^\sigma(\mu)
\le
\liminf_{\varepsilon\to0}V_\varepsilon^\sigma(\mu_\varepsilon)
\le
V^\sigma(\nu),
\]
which shows that $\mu$ is a minimizer of $V^\sigma$.
\end{proof}

\section{SUPPLEMENTARY DEFINITIONS AND TECHNICAL RESULTS}

\subsection{Flat Derivative}\label{appendix:firstvariation}

\begin{definition}\label{def fderivative} Fix $q\geq 0$ and let $\mathcal P_q(\mathcal{X})$ be the space of probability measures on $\mathcal{X}$ with finite $q$-th moments. A functional $F:\mathcal P_q(\mathcal{X}) \to \mathbb R$, is said to admit a first order linear derivative (or a flat derivative), if there exists a functional $\frac{\delta F}{\delta m}: \mathcal P_q(\mathcal{X}) \times\mathcal{X}\rightarrow \mathbb R$, such that
\begin{enumerate}
\item For all $a\in \mathcal{X}$, $\mathcal  P_q(\mathcal{X}) \ni m \mapsto \frac{\delta F}{\delta m}(m,a)$ is continuous.
\item For any $\nu \in \mathcal P_q(\mathcal{X})$, there exists $C>0$ such that for all $a\in \mathcal{X}$ we have 
\[
\left|\frac{\delta F}{\delta m}({\nu},a)\right|\leq C(1+|a|^q)\,.
\]
\item For all $m$, $m'\in \mathcal P_q (\mathcal{X})$,
\begin{equation}\label{def FlatDerivative}
F(m')-F(m)=\int_{0}^{1}\int_{\mathcal{X}}\frac{\delta F}{\delta m}(m + \lambda (m'-m),a)\left(m'- m\right)(da)\,d\lambda.
\end{equation}
\end{enumerate}
The functional $\frac{\delta F}{\delta m}$ is then called the linear (functional) derivative of $F$ on $\mathcal P_q(\mathcal{X})$.
\end{definition}

\subsection{Technical Estimates}\label{sec: technical_estimates}

\begin{proposition}[Uniform Bounds and Wasserstein Stability of the Projection]
\label{prop: w_bound}
Suppose Assumption~\ref{eq: boundness_a} holds, and let \(T > 0\) be a finite time horizon. Then the following statements hold for all \(t \in [0, T]\):

\begin{itemize}
    \item[\textnormal{(i)}] \label{item: w_bound_uniform}
    Let \(w_t\) be a solution to the particle dynamics \eqref{eq: mean-field dynamic}. Then \(w_t\) remains uniformly bounded as
    \begin{equation}\label{eq: w_t bound}
        e^{-M_a T} \leq w_t \leq e^{M_a T}.
    \end{equation}

    \item[\textnormal{(ii)}] \label{item: w_bound_wasserstein}
    Let $M>0$ and let \(\nu, \nu' \in \mathcal{P}_2\left(\mathcal{C}([0,T]; \mathcal{X} \times [0,M])\right)\) be two probability measures on the path space. Then their projected measures satisfy the following Wasserstein stability estimate:
    \begin{equation*}
        \mathcal{W}_2^2\left(\texttt{h} \nu_t, \texttt{h} \nu'_t \right) \leq M^2 \, \mathcal{W}_2^2\left( \nu_t, \nu'_t \right), \quad \text{ for all } t \in [0,T].
    \end{equation*}
    In particular, if $\nu$ and $\nu'$ are the laws of solutions to \eqref{eq: mean-field dynamic}, then due to \eqref{eq: w_t bound} we obtain
       \begin{equation}\label{eq: projection_wasserstein_stability}
        \mathcal{W}_2^2\left(\texttt{h} \nu_t, \texttt{h} \nu'_t \right) \leq e^{2M_a T} \, \mathcal{W}_2^2\left( \nu_t, \nu'_t \right), \quad \text{ for all } t \in [0,T].
    \end{equation}
    
\end{itemize}
\end{proposition}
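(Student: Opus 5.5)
For part (i) I would solve the weight equation explicitly. In \eqref{eq: mean-field dynamic} the position $X_t\equiv X_0$ is frozen, so the weight satisfies a scalar linear ODE with the bounded coefficient $s\mapsto a(\texttt{h}\nu_s,X_0)$. This gives
\[
w_t = w_0\exp\Big(-\int_0^t a(\texttt{h}\nu_s,X_0)\,ds\Big).
\]
Inserting $|a|\le M_a$ from \eqref{eq: boundness_a} yields $w_0e^{-M_aT}\le w_t\le w_0e^{M_aT}$ for $t\in[0,T]$. This is \eqref{eq: w_t bound} under the normalisation $w_0=1$, i.e.\ uniform initial weights as in the algorithm of Section~\ref{sec: particle_system}. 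I would state explicitly that this normalisation of the initial weight is what the displayed bound uses; for a general $w_0$ the bounds carry the factor $w_0$.

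For part (ii) I would fix $t$ and work by coupling. Let $\gamma\in\Pi(\nu_t,\nu'_t)$ be optimal for $\mathcal W_2(\nu_t,\nu'_t)$, with generic point $((x,w),(x',w'))$ and $w,w'\in[0,M]$. The aim is to construct from $\gamma$ a coupling of $\texttt{h}\nu_t$ and $\texttt{h}\nu'_t$ whose cost is at most $M^2\int(|x-x'|^2+|w-w'|^2)\,d\gamma$. Taking the infimum over $\gamma$ then gives the claim. The \eqref{eq: projection_wasserstein_stability} specialisation follows by taking $M=e^{M_aT}$ from (i).

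The coupling I would use splits mass into a common part and an excess part. The common part is the measure $\int (w\wedge w')\,\delta_{(x,x')}\,\gamma(d(x,w),d(x',w'))$ on $\mathcal X\times\mathcal X$. Its transport cost is $\int (w\wedge w')|x-x'|^2\,d\gamma\le M\int|x-x'|^2\,d\gamma$. The leftover marginals are $\int (w-w')_+\delta_x\,d\gamma$ and $\int (w'-w)_+\delta_{x'}\,d\gamma$. They have equal total mass, since both $\texttt{h}\nu_t$ and $\texttt{h}\nu'_t$ are probability measures. I would couple these leftovers by a product coupling, or better by routing them through the common part, and then add the two pieces together.

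The main obstacle is this excess part. Its mass is $\int|w-w'|\,d\gamma$, which is only first order in $|w-w'|$. Its transport distance is not controlled by $|x-x'|$, because the excess sitting at $x$ must be sent to wherever the other measure has surplus. A naive bound therefore gives a cost of order $\operatorname{diam}(\mathcal X)^2\int|w-w'|\,d\gamma$, not $M^2\int|w-w'|^2\,d\gamma$. Before committing to the stated constant, I would test the estimate on two-atom examples with equal positions and slightly different weights. In such an example the excess must travel a distance comparable to the separation of the atoms, while $\mathcal W_2^2(\nu_t,\nu'_t)$ is only quadratic in the weight difference. If that test shows the quadratic-in-$\mathcal W_2$ form cannot hold with constant $M^2$ alone, the place to repair the argument is exactly this excess term; everything else in the scheme goes through as described.
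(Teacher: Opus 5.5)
Your part (i) is the paper's own argument: solve the linear ODE explicitly and insert $|a|\le M_a$. Your caveat is right, since the paper writes $w_t=e^{-\int_0^t a\,ds}$ and so silently takes $w_0=1$.

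For (ii), your proposal does not reach a proof, and no proof exists. The two-atom test you describe refutes the statement, so no treatment of the excess term can rescue it. Take constant paths, $x_1\neq x_2$ in $\mathcal X$, $0<\delta<1$, $M=2$, and
\[
\nu_t=\tfrac12\delta_{(x_1,1+\delta)}+\tfrac12\delta_{(x_2,1-\delta)},\qquad \nu'_t=\tfrac12\delta_{(x_1,1)}+\tfrac12\delta_{(x_2,1)}.
\]
Both projections are probability measures, and mass $\delta/2$ must move from $x_1$ to $x_2$. Hence $\mathcal W_2^2(\texttt{h}\nu_t,\texttt{h}\nu'_t)=\frac{\delta}{2}|x_1-x_2|^2$, while the diagonal coupling gives $\mathcal W_2^2(\nu_t,\nu'_t)\le\delta^2$. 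The claimed bound would force $|x_1-x_2|^2\le 8\delta$. The ratio blows up as $\delta\to0$, so no constant depending only on $M$ and $\operatorname{diam}\mathcal X$ can work.

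The ``in particular'' clause fails too. Take $\mathcal X=[0,1]$ and $a(m,x)=x-\int y\,m(dy)$, which satisfies Assumption~\ref{assumption1}. Compare $w_0=1$ with $\operatorname{Law}(X_0)=\frac12(\delta_0+\delta_1)$ against $\frac12(\delta_\eta+\delta_1)$. Then $\mathcal W_2^2(\nu_t,\nu'_t)=O(\eta^2)$, but $\mathcal W_2^2(\texttt{h}\nu_t,\texttt{h}\nu'_t)\ge c_t\,\eta$ with $c_t>0$ for every $t>0$.

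The paper's proof breaks exactly where you anticipated trouble. It treats $\int ww'\,\delta_{(x,x')}\,d\pi_t$ as a coupling of $\texttt{h}\nu_t$ and $\texttt{h}\nu'_t$. Its marginals, however, are $\int ww'\,\delta_x\,d\pi_t$ and $\int ww'\,\delta_{x'}\,d\pi_t$, and its mass $\int ww'\,d\pi_t$ is generally not $1$. In the example above, with the diagonal coupling, its second marginal is $\texttt{h}\nu_t$ rather than $\texttt{h}\nu'_t$.

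What your common/excess coupling does give, with $\gamma$ optimal for $\mathcal W_2(\nu_t,\nu'_t)$ and $D=\operatorname{diam}\mathcal X$, is
\[
\mathcal W_2^2(\texttt{h}\nu_t,\texttt{h}\nu'_t)\le M\,\mathcal W_2^2(\nu_t,\nu'_t)+\tfrac{D^2}{2}\,\mathcal W_2(\nu_t,\nu'_t).
\]
This holds because the excess part has mass $\frac12\int|w-w'|\,d\gamma\le\frac12\mathcal W_2(\nu_t,\nu'_t)$ and travels at most distance $D$; the example shows this order is sharp. Alternatively, use Kantorovich--Rubinstein with test functions $(x,w)\mapsto f(x)w$, where $f$ is $1$-Lipschitz and shifted so that $|f|\le D$. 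This yields the genuinely Lipschitz estimate $\mathcal W_1(\texttt{h}\nu_t,\texttt{h}\nu'_t)\le\sqrt{M^2+D^2}\,\mathcal W_1(\nu_t,\nu'_t)$.

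Statement (ii) has to be replaced by one of these bounds. The Gr\"onwall/Picard arguments in Theorems~\ref{thm: exist_unique_solution} and~\ref{thm: POC} that rely on (ii) must then be redone, for instance entirely in $\mathcal W_1$ with $a$ assumed $\mathcal W_1$-Lipschitz. The reason is that a bound linear in $\mathcal W_2$ does not close a Gr\"onwall inequality for $\mathcal W_2^2$.
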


\begin{proof}
    The first item is proven directly by the boundedness assumption. We have 
    \begin{equation*}
        w_t =e^{-\int_0^t a(\texttt{h}\nu_t,X) ds}.
    \end{equation*}
    Hence
    \begin{equation*}
        e^{-M_aT} = e^{\int_0^T -M_adt}\leq  w_t \leq e^{\int_0^T M_a dt} = e^{M_aT}.
    \end{equation*}
    For the second item, let denote $\pi_t$ be any coupling for $\nu_t$ and $\nu'_t$. We denote that $\tilde{\texttt{h}} \pi_t$ is the coupling for $\texttt{h}\nu_t$ and $\texttt{h}\nu'_t$. It is easy to check that $\tilde{\texttt{h}}\pi_t$ is the projection (different to \eqref{eq: projection}, this projection is for product space) of $\pi_t$ in the sense that for any test function $f$ we have
    \begin{align*}
        &\int_{\mathcal{X} \times \mathcal{X}} f(x_t,x_t') \tilde{\texttt{h}}\pi_t(dx_t,dx'_t) \\
        &= \int_{(\mathcal{X} \times [0,M])\times(\mathcal{X} \times [0,M])} w_t w'_t f(x_t,x'_t) \pi_t(dx_t,dw_t,dx'_t,dw'_t) .
    \end{align*}
    Hence we have 
    \begin{align*}
        &\int_{\mathcal{X}\times \mathcal{X}} \lvert x_t -x_t'\rvert^2 \tilde{\texttt{h}}\pi_t(dx_t,dx'_t) \\
        &= \int_{(\mathcal{X} \times [0,M])\times(\mathcal{X} \times [0,M])} w_t w'_t \lvert x_t -x'_t\rvert^2 \pi_t(dx_t,dw_t,dx'_t,dw'_t) \\
        & \leq M^2 \int_{(\mathcal{X} \times [0,M])\times(\mathcal{X} \times [0,M])} \lvert x_t -x'_t\rvert^2 \pi_t(dx_t,dw_t,dx'_t,dw'_t) \\
        & \leq M^2 \int_{(\mathcal{X} \times [0,M])\times(\mathcal{X} \times [0,M])} \left(\lvert x_t -x'_t\rvert^2 + \lvert w_t - w'_t\rvert^2 \right)\pi_t(dx_t,dw_t,dx'_t,dw'_t) 
    \end{align*}
    which yields the result if we take infimum on both sides.
\end{proof}
\begin{proposition}\label{prop: W-2t bound W-2}
Let $\mu$, $\mu' \in \mathcal{P}_2\left(\mathcal{C}([0,T]; \mathcal{X})\right)$. Then, for any $t \in [0,T]$,
    \begin{equation*}
        \mathcal{W}_2(\mu_t,\mu'_t) \leq \mathcal{W}_{2,t}(\mu,\mu').
    \end{equation*}
\end{proposition}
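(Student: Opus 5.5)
The inequality follows by pushing couplings forward under the time-$t$ evaluation map. Fix $t\in[0,T]$ and write $e_t:\mathcal{C}([0,T];\mathcal{X})\to\mathcal{X}$, $e_t(x)=x_t$, so that $\mu_t=(e_t)_\#\mu$ and $\mu'_t=(e_t)_\#\mu'$. Here $\mathcal{W}_{2,t}$ is understood as the Wasserstein distance built from the seminorm $\lVert x-y\rVert_t=\sup_{0\le s\le t}|x_s-y_s|$ on path space. Note that $\mu_t\in\mathcal{P}_2(\mathcal{X})$, since $|x_t|\le\lVert x\rVert_T$ and $\mu$ has finite second moment on path space.

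Take an arbitrary coupling $\gamma\in\Pi(\mu,\mu')$. Its push-forward $\gamma_t:=(e_t\times e_t)_\#\gamma$ is a coupling of $\mu_t$ and $\mu'_t$, because its marginals are $(e_t)_\#\mu$ and $(e_t)_\#\mu'$. For every pair of paths $x,y$ we have the pointwise bound $|x_t-y_t|\le\sup_{0\le s\le t}|x_s-y_s|=\lVert x-y\rVert_t$. Combining these gives
\begin{equation*}
\mathcal{W}_2^2(\mu_t,\mu'_t)\le\int_{\mathcal{X}\times\mathcal{X}}|u-v|^2\,\gamma_t(du,dv)=\int|x_t-y_t|^2\,\gamma(dx,dy)\le\int\lVert x-y\rVert_t^2\,\gamma(dx,dy).
\end{equation*}

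Taking the infimum over $\gamma\in\Pi(\mu,\mu')$ on the right gives $\mathcal{W}_2^2(\mu_t,\mu'_t)\le\mathcal{W}_{2,t}^2(\mu,\mu')$. Taking square roots gives the claim. As a side remark, the same argument with $\lVert\cdot\rVert_T$ shows $\mathcal{W}_{2,t}\le\mathcal{W}_{2,T}$, which is how the estimate is used in the earlier proofs.

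The argument has no substantive obstacle. The only points needing care are the measurability of $e_t$, which is continuous for the sup norm, and the fact that the push-forward of a coupling is again a coupling with the stated marginals. Both are standard.
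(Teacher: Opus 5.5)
Your proof is correct and is essentially the paper's argument: push an arbitrary coupling of $\mu,\mu'$ forward under the time-$t$ evaluation map, use $|x_t-y_t|\le\sup_{s\le t}|x_s-y_s|$, and take the infimum over couplings. Your reading of $\mathcal{W}_{2,t}$ as the Wasserstein distance for the seminorm $\lVert\cdot\rVert_t$ on the full path space matches how the paper's proof uses it.
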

\begin{proof}
  Let $\Pi$ be a coupling of $\mu$ and $\mu'$ and for a fixed $t \in [0,T]$ consider a projection map $\Gamma_t : \mathcal{C}([0,T]; \mathcal{X}) \to \mathcal{X}$ given as $\Gamma_t(\gamma) := \gamma_t$. Let $\pi_t$ be the push-forward of $\Pi$ under $\Gamma_t$. Then
  $$\int_{\mathcal{X} \times \mathcal{X}} |x-y|^2 \pi_t(dx,dy) = \int_{\mathcal{C}([0,T]; \mathcal{X}) \times \mathcal{C}([0,T]; \mathcal{X})} | \gamma_t-\gamma'_t|^2 \Pi(d\gamma,d\gamma').$$
  Note that $\pi_t$ constructed this way is a coupling of $\mu_t = \Gamma_t(\mu)$ and $\mu'_t = \Gamma_t(\mu')$ and hence, using $| \gamma_t-\gamma'_t| \leq \sup_{s\in[0,t]}  | \gamma_s-\gamma'_s|$ and then taking the infimum over $\Pi$ on the right-hand side, we get
  $$\mathcal{W}_2^2(\mu_t,\mu'_t) \leq \int_{\mathcal{X} \times \mathcal{X}} |x-y|^2 \pi_t(dx,dy) \leq \mathcal{W}_{2,t}^2(\mu,\mu').$$
\end{proof}

\subsection{Construction of the Kernel on a compact space}\label{sec: kernel on torus}

Our example is based on the standard Gaussian kernel truncated and renormalized on a compact space. Let $\xi$ denote the standard Gaussian density on $\mathbb{R}^d$:
\[
\xi(z) := \frac{1}{(2\pi)^{d/2}} \exp\left( -\frac{|z|^2}{2} \right).
\]
We then define the mapping 
\[
\xi_\varepsilon : \mathcal{X} \to \mathbb{R}_+, \quad 
\xi_\varepsilon(x) := \frac{1}{C_{\varepsilon,d,L}} \cdot \frac{1}{\varepsilon^d} 
\cdot \xi\!\left( \frac{x}{\varepsilon} \right),
\]
where \(C_{\varepsilon,d,L}\) is the normalization constant ensuring 
\(\int_{\mathcal{X}} \xi_\varepsilon(x)\,dx = 1\).  
We define
\[
C_{\varepsilon,d,L} := \int_{\mathcal{X}} \frac{1}{\varepsilon^d} \xi\left( \frac{x}{\varepsilon} \right) dx
\]
and we have
\[
C_{\varepsilon,d} = \left( 2\Phi\left( \frac{L}{\varepsilon} \right) - 1 \right)^d,
\]
where $\Phi$ denotes the cumulative distribution function of the standard normal distribution. As $\varepsilon \to 0$, we observe that $C_{\varepsilon,d,L} \nearrow 1$. Thus, if we restrict $\varepsilon$ to a compact interval $(0, \varepsilon_{\max}]$, then
\begin{equation}\label{eq: C_varepsilon,d}
C_{\varepsilon,d,L} \in \left[ \left(2\Phi\left( \frac{L}{\varepsilon_{\max}} \right) - 1 \right)^d, 1 \right).
\end{equation}

We now examine the properties of $K_\varepsilon$.

\begin{enumerate}
    \item \textbf{Normalization.} By construction, $\xi_\varepsilon$ is a probability density function on $\mathcal{X}$. Consequently, its convolution is also a probability density function on $\mathcal{X}$.

    \item \textbf{Upper and lower bounds.} Since $\xi$ is smooth and strictly positive, so is $\xi_\varepsilon$. Moreover, since
    \[
        \xi_\varepsilon(x) = \frac{1}{C_{\varepsilon,d}} \cdot \frac{1}{\varepsilon^d} \cdot \frac{1}{(2\pi)^{d/2}} \exp\left( -\frac{|x|^2}{2\varepsilon^2} \right),
    \]
    we obtain the following pointwise bounds for $x \in \mathcal{X}$:
    \begin{align*}
        \sup_{x \in \mathcal{X}} \xi_\varepsilon(x) &\leq \frac{1}{C_{\varepsilon,d}} \cdot \frac{1}{\varepsilon^d} \cdot \frac{1}{(2\pi)^{d/2}}, \\
        \inf_{x \in \mathcal{X}} \xi_\varepsilon(x) &\geq \frac{1}{C_{\varepsilon,d}} \cdot \frac{1}{\varepsilon^d} \cdot \frac{1}{(2\pi)^{d/2}} \cdot \exp\left( -\frac{dL^2}{2\varepsilon^2} \right).
    \end{align*}
    Since $K_\varepsilon = \xi_\varepsilon * \xi_\varepsilon$, the same type of bounds hold for $K_\varepsilon$. In particular, we may introduce the constants
    \[
        K_{\max,\varepsilon} := \sup_{x \in \mathcal{X}} K_\varepsilon(x),\qquad K_{\min,\varepsilon} := \inf_{x \in \mathcal{X}} K_\varepsilon(x),
    \]
    which are strictly positive and finite. Note that $K_{\max,\varepsilon}$ and $K_{\min,\varepsilon}$ depend on $(\varepsilon,d,L)$, but for simplicity of notation we suppress this dependence.

    \item \textbf{Lipschitz continuity.} Since $\xi$ is smooth, we have 
    \begin{align*}
        \lvert \nabla \xi_\varepsilon(x)\rvert 
        &= \frac{1}{C_{\varepsilon,d}} \cdot \frac{1}{\varepsilon^d} \cdot \frac{1}{(2\pi)^{d/2}} 
       \left\lvert \nabla \exp\!\left(-\frac{\lvert x\rvert^2}{2\varepsilon^2}\right)\right\rvert \\
        &= \frac{1}{C_{\varepsilon,d}} \cdot \frac{1}{\varepsilon^d} \cdot \frac{1}{(2\pi)^{d/2}} 
       \frac{\lvert x\rvert}{\varepsilon^2} 
       \exp\!\left(-\frac{\lvert x\rvert^2}{2\varepsilon^2}\right) \\
        &\leq \frac{1}{C_{\varepsilon,d}} \cdot \frac{1}{\varepsilon^{d+1}} \cdot \frac{1}{(2\pi)^{d/2}} \, e^{-1/2},
    \end{align*}
    where the last inequality follows from the fact that $r e^{-r^2/2} \leq e^{-1/2}$ for all $r \geq 0$.

    Therefore, $\xi_\varepsilon$ is globally Lipschitz with constant of order $O(\varepsilon^{-(d+1)})$. 
    Moreover, since 
    \[
        \nabla K_\varepsilon = (\nabla \xi_\varepsilon) * \xi_\varepsilon,
    \]
    we obtain
    \[
        \|\nabla K_\varepsilon\|_\infty \leq \|\nabla \xi_\varepsilon\|_\infty \, \|\xi_\varepsilon\|_{L^1} 
        = \|\nabla \xi_\varepsilon\|_\infty,
    \]
    which shows that $K_\varepsilon$ inherits the same Lipschitz constant. Hence $K_\varepsilon$ is Lipschitz continuous on $\mathcal{X}$ with constant of order $O(\varepsilon^{-(d+1)})$.

\end{enumerate}

\begin{remark}
The kernel \(K_\varepsilon\) defined above is a smooth function on \(\mathcal{X}\), bounded from below and above and globally Lipschitz.  
More precisely, there exist positive constants
\[
    0 < K_{\min,\varepsilon} \leq K_{\max,\varepsilon} < \infty, 
    \quad L_{K_\varepsilon} < \infty,
\]
such that
\begin{equation}\label{eq: l u bound K}
    K_{\min,\varepsilon} \leq K_\varepsilon(x) \leq K_{\max,\varepsilon}
\end{equation}
and
\[
    |K_\varepsilon(x) - K_\varepsilon(y)| \leq L_{K_\varepsilon} \, |x-y|, 
    \quad \forall\, x,y \in \mathcal{X}.
\]
The values of \(K_{\min,\varepsilon}\), \(K_{\max,\varepsilon}\), and \(L_{K_\varepsilon}\) depend on the dimension \(d\), the diameter of $\mathcal{X}$, the parameter \(\varepsilon\), and the choice of the base function \(\xi\). 
\end{remark}

\begin{remark} \label{remark: moment xi varepsilon}
For $p\ge 0$ and an integrable kernel $\xi:\mathbb{R}^d\to[0,\infty)$. Suppose
\(
M_p(\xi):=\int_{\mathbb{R}^d} |u|^p \,\xi(u)\,du
< \infty\).
Let $\xi_\varepsilon$ be as in Assumption~\ref{hypo: mollifier_kernel}.
Then the $p$-th moment of $\xi_\varepsilon$ satisfies
\begin{align*}
M_p(\xi_\varepsilon)
&:=\int_{\mathcal{X}} |x|^p\,\xi_\varepsilon(x)\,dx\\
&=\frac{\varepsilon^p}{C_{\varepsilon,d}}\int_{[-L/\varepsilon,L/\varepsilon]^d} |u|^p\,\xi(u)\,du\\
&\le \frac{\varepsilon^p}{C_{\varepsilon,d}}\,M_p(\xi).
\end{align*}
\end{remark}

\section{VERIFICATION OF THE BOUNDNESS AND LIPSCHITZ CONDITION}\label{sec: checking_conditions}

The main results above—existence and uniqueness (Theorem~\ref{thm: exist_unique_solution}) and propagation of chaos (Theorem~\ref{thm: POC})—require that the drift \(a(m,x)\) be uniformly bounded and Lipschitz continuous in both the spatial variable \(x\) and the measure variable \(m\). We prove that these properties hold for a broad class of kernel-based drifts \(a\). 
The resulting bounds are explicit and depend only on the smoothing kernel, the reference density, and the diameter of \(\mathcal{X}\).

\begin{proposition}\label{prop: a_lipschitz_bounded}
Let \(a(m,x)\) be given by any of the kernelized forms \eqref{eq: kernel_1}–\eqref{eq: kernel_4}. 
Suppose Assumptions~\ref{hypo: F_full}, \ref{hypo: mollifier_kernel}, and \ref{hypo: pi_full} hold. 
Then \(a(m,x)\) satisfies the boundedness and Lipschitz conditions \eqref{eq: boundness_a} and \eqref{eq: Lipschitz_a}.
\end{proposition}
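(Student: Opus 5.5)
The plan is to treat each of the four forms \eqref{eq: kernel_1}–\eqref{eq: kernel_4} as a finite sum of building blocks, and to show that every block is bounded and Lipschitz in $(m,x)$ with respect to $|x-y|+\mathcal{W}_2(m,m')$. The flat-derivative term $\frac{\delta F}{\delta \mu}$ is already handled by Assumption \ref{hypo:F_full:i}--\ref{hypo:F_full:ii}. The remaining terms are built from four quantities: $K_\varepsilon * m$, $\log(K_\varepsilon*m)$, $\log \pi$ and $\log(K_\varepsilon*\pi)$, together with integrals of these against $m$.

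The first step is to record the pointwise estimates. These follow from \eqref{eq: l u bound K} and the Lipschitz property of $K_\varepsilon$. For every $m\in\mathcal P(\mathcal X)$ we have $K_{\min,\varepsilon}\le K_\varepsilon*m(x)\le K_{\max,\varepsilon}$. Since $\log$ is $1/K_{\min,\varepsilon}$-Lipschitz on $[K_{\min,\varepsilon},\infty)$, $\log(K_\varepsilon*m)$ is bounded by $\max(|\log K_{\min,\varepsilon}|,|\log K_{\max,\varepsilon}|)$. Likewise $\pi_{\min}\le\pi\le\pi_{\max}$, so $\log \pi$ is bounded and $L_\pi/\pi_{\min}$-Lipschitz, and the same holds for $K_\varepsilon*\pi$.

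The second step handles the measure dependence. For fixed $x$, $y\mapsto K_\varepsilon(x-y)$ is $L_{K_\varepsilon}$-Lipschitz, so Kantorovich–Rubinstein duality gives
\[
|K_\varepsilon*m(x)-K_\varepsilon*m'(x)|\le L_{K_\varepsilon}\mathcal W_1(m,m')\le L_{K_\varepsilon}\mathcal W_2(m,m').
\]
In $x$, $|K_\varepsilon*m(x)-K_\varepsilon*m(y)|\le L_{K_\varepsilon}|x-y|$. Composing with $\log$ yields the joint Lipschitz bound for $\log(K_\varepsilon*m)(x)$, and hence for $\log\frac{K_\varepsilon*m}{\pi}$ and $\log\frac{K_\varepsilon*m}{K_\varepsilon*\pi}$.

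The third step deals with the normalizing integrals, such as $\operatorname{KL}(K_\varepsilon*m|\pi)=\int g_m\,d(K_\varepsilon*m)$ or $\int g_m\,dm$ with $g_m=\log\frac{K_\varepsilon*m}{\pi}$. Writing $\int g_m\,dm-\int g_{m'}\,dm'=\int (g_m-g_{m'})\,dm+\big(\int g_{m'}\,dm-\int g_{m'}\,dm'\big)$, the first piece is controlled by the sup-norm bound of step two. The second piece is at most $\mathrm{Lip}(g_{m'})\mathcal W_1(m,m')$ by duality. For $\operatorname{KL}(K_\varepsilon*m|\pi)=\int (K_\varepsilon*g_m)\,dm$ I use Fubini and the symmetry of $K_\varepsilon$, after which the same decomposition applies.

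The fourth step treats the two nested terms. For \eqref{eq: kernel_3}, the term $K_\varepsilon*\big(\frac{m}{K_\varepsilon*m}\big)(x)=\int K_\varepsilon(x-y)(K_\varepsilon*m(y))^{-1}\,m(dy)$ is bounded by $K_{\max,\varepsilon}/K_{\min,\varepsilon}$. Its integrand in $y$ is bounded and Lipschitz, since $1/(K_\varepsilon*m)$ is a Lipschitz function of $K_\varepsilon*m$ on $[K_{\min,\varepsilon},\infty)$, so the decomposition of step three applies again. For \eqref{eq: kernel_4}, the term $K_\varepsilon*\log\frac{K_\varepsilon*m}{\pi}$ is a convolution of a bounded, jointly Lipschitz function with $K_\varepsilon$, and it inherits both properties. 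For all four forms, mass preservation \eqref{eq: mass preservation} holds by construction.

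The main obstacle is the Lipschitz dependence on $m$ of the nested normalizing integrals. The issue is that $m$ appears both in the integrand and in the integrating measure, which is exactly why the add-and-subtract split with a uniform Lipschitz bound on $g_{m'}$ is needed. A related subtlety arises because convolution on the compact $\mathcal X$ has to be read as $\int_{\mathcal X}K_\varepsilon(x-y)\,m(dy)$. Throughout, I rely only on the lower bound $K_{\min,\varepsilon}>0$ and on $L_{K_\varepsilon}$ from the kernel remark, both of which hold for this reading.
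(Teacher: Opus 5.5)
Your proposal is correct and follows essentially the same route as the paper. The paper also uses two-sided bounds on $K_\varepsilon*m$ and $\pi$, Kantorovich--Rubinstein duality for the measure dependence, and the add-and-subtract split for the normalizing integrals. It likewise applies the product rule to $1/(K_\varepsilon*m)$ in the nested term of \eqref{eq: kernel_3}, exactly as in Propositions~\ref{prop:bounded} and~\ref{prop:lipschitz}.

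Your closing remark on mass preservation is not needed for this statement. For \eqref{eq: kernel_1}--\eqref{eq: kernel_2} it is not automatic anyway, since there the $\log$ term is integrated against $m$ but the KL term against $K_\varepsilon*m$.
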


The Lipschitz continuity and boundedness conditions can also be extended to a Fisher--Rao gradient flow for an energy functional with the kernelized chi-square divergence as a regularizer.

\begin{proposition}\label{prop: a_lipschitz_bounded_chi_square}
Consider the energy functional
\[
    F(m) + \sigma \int \left( \frac{K_\varepsilon * m}{\pi} - 1 \right)^2 \pi(x) \, dx.
\]
Its corresponding Fisher--Rao gradient flow takes the form \eqref{eq: FR with a}, where
\begin{equation*}
    a(m,x) = \frac{\delta F}{\delta \mu}(m,x) 
    + 2\sigma \, K_\varepsilon * \left( \frac{K_\varepsilon * m}{\pi} \right)(x) - 2\sigma \int K_\varepsilon * \left( \frac{K_\varepsilon * m}{\pi} \right)(y) \, m(dy).
\end{equation*}
Suppose that Assumptions~\ref{hypo: F_full}, \ref{hypo: mollifier_kernel}, and \ref{hypo: pi_full} hold. 
Then \(a(m,x)\) satisfies both the boundedness condition \eqref{eq: boundness_a} and the Lipschitz continuity condition \eqref{eq: Lipschitz_a}.
\end{proposition}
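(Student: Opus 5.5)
The plan is to treat the three pieces of $a$ one at a time: the term $\frac{\delta F}{\delta \mu}$, the convolution term $G_m(x) := K_\varepsilon * \bigl(\frac{K_\varepsilon * m}{\pi}\bigr)(x)$, and the normalizing constant $c(m) := \int G_m(y)\, m(dy)$. For each we prove a uniform bound and a Lipschitz estimate in $(x, m)$, then combine them with the triangle inequality. The first term is handled directly by Assumptions \ref{hypo:F_full:i} and \ref{hypo:F_full:ii}, which give the constants $C_F$ and $L_F$.

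\textbf{Step 1: two-sided bounds.} Since $K_{\min,\varepsilon} \le K_\varepsilon \le K_{\max,\varepsilon}$ by \eqref{eq: l u bound K} and $m$ is a probability measure, we have $K_{\min,\varepsilon} \le K_\varepsilon * m \le K_{\max,\varepsilon}$. Combining this with $\pi_{\min} \le \pi \le \pi_{\max}$ gives
\[
0 < \frac{K_{\min,\varepsilon}}{\pi_{\max}} \le \frac{K_\varepsilon * m}{\pi} \le \frac{K_{\max,\varepsilon}}{\pi_{\min}}.
\]
The outer convolution is taken against $dx$ on the compact set $\mathcal{X}$, so it is bounded by $K_{\max,\varepsilon}\,|\mathcal{X}|\,K_{\max,\varepsilon}/\pi_{\min}$. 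This bounds $G_m$, and hence $c(m)$, uniformly in $m$. Therefore $|a| \le C_F + 4\sigma \sup|G_m| =: M_a$.

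\textbf{Step 2: Lipschitz in $x$.} The $x$-dependence of $G_m$ enters only through $K_\varepsilon(x-z)$. Hence
\[
|G_m(x) - G_m(x')| \le L_{K_\varepsilon}\,|x-x'| \int_{\mathcal{X}} \frac{K_\varepsilon * m}{\pi}(z)\,dz ,
\]
and the integral is bounded by Step 1. The term $c(m)$ does not depend on $x$.

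\textbf{Step 3: Lipschitz in $m$, the main obstacle.} We need to control differences of integrals against $m$ and $m'$ by $\mathcal{W}_2(m,m')$. The tool is Kantorovich--Rubinstein duality: for any $L$-Lipschitz $f$,
\[
\Bigl|\int f\, d(m-m')\Bigr| \le L\,\mathcal{W}_1(m,m') \le L\,\mathcal{W}_2(m,m').
\]
Since $y \mapsto K_\varepsilon(z-y)$ is $L_{K_\varepsilon}$-Lipschitz, we get
\[
|K_\varepsilon * m(z) - K_\varepsilon * m'(z)| \le L_{K_\varepsilon}\,\mathcal{W}_2(m,m')
\]
uniformly in $z$. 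Dividing by $\pi \ge \pi_{\min}$ and convolving again gives
\[
|G_m(x) - G_{m'}(x)| \le K_{\max,\varepsilon}\,|\mathcal{X}|\,\frac{L_{K_\varepsilon}}{\pi_{\min}}\,\mathcal{W}_2(m,m').
\]
For $c(m)$ we split
\[
c(m) - c(m') = \int (G_m - G_{m'})\, dm + \int G_{m'}\, d(m-m').
\]
The first piece is controlled by the uniform estimate just proved. For the second, $G_{m'}$ is Lipschitz in $x$ uniformly in $m'$ by Step 2, so duality applies once more.

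Summing the three contributions gives \eqref{eq: Lipschitz_a} with $L_a = L_F + 4\sigma\,(\text{constants above})$. All constants depend on $\varepsilon$, $\pi_{\min}$ and $|\mathcal{X}|$, but not on $m$, $m'$, $x$ or $x'$.
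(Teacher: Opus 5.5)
Your proof is correct and follows essentially the same route as the paper: the same three-term decomposition, uniform bounds via $K_{\max,\varepsilon}/\pi_{\min}$, Kantorovich--Rubinstein duality (with $\mathcal{W}_1 \le \mathcal{W}_2$) for the measure dependence, and the same two-piece split of the normalizing constant $\int G_m\,dm$. The differences are cosmetic. You apply duality pointwise to $K_\varepsilon * m(z)$ and then integrate, where the paper first forms the Fubini kernel $\phi_x$. Your constants carry a factor $K_{\max,\varepsilon}|\mathcal{X}|$ where the paper implicitly uses $\int_{\mathcal{X}} K_\varepsilon(x-w)\,dw \le 1$, which if anything makes your bounds more robust.
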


\begin{proposition}\label{prop:bounded}
For the boundedness condition, we have the following estimates (uniform in $\mu\in\mathcal{P}(\mathcal{X})$ and $x\in\mathcal{X}$):
\begin{enumerate}[label=(\arabic*),ref=\theproposition(\arabic*)]
    \item\label{prop:bounded:1}
    $\left| \frac{\delta F}{\delta \mu}(\mu,x) \right| \le C_F.$

    \item\label{prop:bounded:2}
    $\left| \log \frac{K_\varepsilon * \mu(x)}{\pi(x)} \right|
    \le \max \left( \log\frac{\pi_{\max}}{K_{\min,\varepsilon}}, 
    \log\frac{K_{\max,\varepsilon}}{\pi_{\min}} \right).$

    \item\label{prop:bounded:3}
    $\operatorname{KL}\!\left(K_\varepsilon * \mu  | \pi \right) 
    \le \log \frac{K_{\max,\varepsilon}}{\pi_{\min}}.$

    \item\label{prop:bounded:4}
    $\left\lvert \log \frac{K_\varepsilon*\mu(x)}{K_\varepsilon*\pi(x)} \right\rvert
    \le \max \left( \log\frac{\pi_{\max}}{K_{\min,\varepsilon}}, 
    \log\frac{K_{\max,\varepsilon}}{\pi_{\min}} \right).$

    \item\label{prop:bounded:5}
    $\operatorname{KL}\!\left(K_\varepsilon*\mu | K_\varepsilon*\pi \right)
    \le \log \frac{K_{\max,\varepsilon}}{\pi_{\min}}.$

    \item\label{prop:bounded:6}
     $\displaystyle K_\varepsilon*\left(\frac{\mu}{K_\varepsilon*\mu}\right)(x)
     \le \frac{K_{\max,\varepsilon}}{K_{\min,\varepsilon}}.$

    \item\label{prop:bounded:7}
     $\displaystyle \frac{1}{(K_\varepsilon*\mu)(x)} \le \frac{1}{K_{\min,\varepsilon}}.$
\end{enumerate}
\end{proposition}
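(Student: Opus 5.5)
The plan is to derive all seven estimates from one two-sided pointwise bound on the convolution, using $K_{\min,\varepsilon}\le K_\varepsilon\le K_{\max,\varepsilon}$ from \eqref{eq: l u bound K} and $\pi_{\min}\le\pi\le\pi_{\max}$ from Assumption~\ref{hypo: pi_full}. Item (1) is exactly Assumption~\ref{hypo:F_full:i}. The basic observation is that for any $\mu\in\mathcal P(\mathcal X)$ and $x\in\mathcal X$,
\[
K_{\min,\varepsilon}\le (K_\varepsilon*\mu)(x)=\int_{\mathcal X}K_\varepsilon(x-y)\,\mu(dy)\le K_{\max,\varepsilon},
\]
because $\mu$ has unit mass. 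This immediately gives (7). For (2) I would bound the ratio $\frac{K_\varepsilon*\mu}{\pi}$ between $K_{\min,\varepsilon}/\pi_{\max}$ and $K_{\max,\varepsilon}/\pi_{\min}$. Since $\log$ is monotone, the absolute value of the logarithm is at most the maximum of the absolute values at the two endpoints, which is the stated bound. Item (4) is identical once we note that $K_\varepsilon*\pi$ obeys the same kind of sandwich. To get the constants exactly as written I would use the bounds $\pi_{\min}\le K_\varepsilon*\pi\le\pi_{\max}$, since $K_\varepsilon*\pi$ is an average of values of $\pi$.

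For the KL bounds (3) and (5), I would write $\operatorname{KL}(K_\varepsilon*\mu|\pi)=\int \log\frac{K_\varepsilon*\mu}{\pi}\,(K_\varepsilon*\mu)(x)\,dx$. I would then bound the integrand's logarithm from above by $\log\frac{K_{\max,\varepsilon}}{\pi_{\min}}$. It remains to check that $K_\varepsilon*\mu$ is a probability density on $\mathcal X$, so that the outer integral has mass one. Normalization of $K_\varepsilon$ on $\mathcal X$ gives this, possibly up to the boundary truncation; this is where I would be careful, and if needed I would simply interpret the convolution as the renormalized one the paper uses. The same argument handles (5), with $\pi$ replaced by $K_\varepsilon*\pi\ge\pi_{\min}$. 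Nonnegativity of KL gives the lower side, so the bound on the absolute value follows.

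For (6) I would write
\[
K_\varepsilon*\Big(\frac{\mu}{K_\varepsilon*\mu}\Big)(x)=\int K_\varepsilon(x-y)\,\frac{\mu(dy)}{(K_\varepsilon*\mu)(y)}.
\]
Bounding $K_\varepsilon\le K_{\max,\varepsilon}$ and $1/(K_\varepsilon*\mu)\le 1/K_{\min,\varepsilon}$ via (7), and using $\mu(\mathcal X)=1$, gives the stated bound. Here the expression is read as integration against the measure $\mu$, which is what makes it meaningful for empirical measures.

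The only genuine obstacle is the bookkeeping of the domain in (3) and (5): making sure that the convolutions on the compact $\mathcal X$ integrate to one, and that $K_\varepsilon*\pi$ inherits the bounds of $\pi$. Everything else is direct monotonicity of $\log$ combined with mass one of $\mu$.
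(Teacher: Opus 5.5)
Your proposal is correct and follows the paper's proof essentially line by line. Everything is reduced to the sandwich $K_{\min,\varepsilon}\le K_\varepsilon*\mu\le K_{\max,\varepsilon}$ combined with $\pi_{\min}\le\pi\le\pi_{\max}$, and (6) is obtained by the same direct integral bound. You are also more explicit than the paper about two facts its one-line argument for (3)--(5) uses implicitly: that $K_\varepsilon*\pi$ stays in $[\pi_{\min},\pi_{\max}]$, and that $K_\varepsilon*\mu$ has unit mass on $\mathcal{X}$. Both hold only if the kernel integrates to one in each variable (e.g.\ a periodic convolution).
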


\begin{proof}
\ref{prop:bounded:1} follows from Assumption~\ref{hypo: F_full}.
For \ref{prop:bounded:2}, \ref{prop:bounded:3}, \ref{prop:bounded:4}, and \ref{prop:bounded:5}, we use
\[
\frac{K_{\min,\varepsilon}}{\pi_{\max}}
\le \frac{K_\varepsilon*\mu(x)}{\pi(x)}
\le \frac{K_{\max,\varepsilon}}{\pi_{\min}},
\quad \forall x \in \mathcal{X},
\]
which follows from Assumptions~\ref{hypo: mollifier_kernel} and \ref{hypo: pi_full}.
For \ref{prop:bounded:6}, we compute
\begin{align*}
K_\varepsilon*\left(\frac{\mu}{K_\varepsilon*\mu}\right)(x)
&= \int K_\varepsilon(x-y)\left(\frac{\mu}{K_\varepsilon*\mu}\right)(y)\,dy \\
&= \int \frac{K_\varepsilon(x-y)}{K_\varepsilon*\mu(y)}\,\mu(y)\,dy \\
&\le \frac{K_{\max,\varepsilon}}{K_{\min,\varepsilon}}.
\end{align*}
Finally, \ref{prop:bounded:7} is immediate from $(K_\varepsilon*\mu)(x)\ge K_{\min,\varepsilon}$.
\end{proof}

\begin{proposition}\label{prop:lipschitz}
For all \(\mu,\nu\in\mathcal{P}_2(\mathcal{X})\) and \(x,y\in\mathcal{X}\), the following quantitative bounds hold:
\begin{enumerate}[label=(\arabic*),ref=\theproposition(\arabic*)]
    \item\label{prop:lipschitz:1}
    \[
    \left|\frac{\delta F}{\delta \mu}(\mu,x)-\frac{\delta F}{\delta \mu}(\nu,y)\right|
    \le L_F\big(|x-y|+\mathcal{W}_2(\mu,\nu)\big).
    \]

    \item\label{prop:lipschitz:2}
    \[
    \left| \log K_\varepsilon*\mu(x) - \log K_\varepsilon*\nu(x) \right|
    \leq \frac{L_{K_\varepsilon}}{K_{\min,\varepsilon}}\,\mathcal{W}_1(\mu,\nu).
    \]

    \item\label{prop:lipschitz:3}
    \[
    \left| \log K_\varepsilon *\nu(x) - \log K_\varepsilon *\nu(y) \right|
    \leq \frac{L_{K_\varepsilon}}{K_{\min,\varepsilon}}\,|x-y|.
    \]

    \item\label{prop:lipschitz:4}
    \[
    \left| \log K_\varepsilon *\pi(x) - \log K_\varepsilon *\pi(y) \right|
    \leq \frac{L_\pi}{\pi_{\min}}\,|x-y|.
    \]
    
    \item\label{prop:lipschitz:5}
    \[
    \left| \log \frac{K_\varepsilon*\nu(x)}{\pi(x)} 
      - \log \frac{K_\varepsilon*\nu(y)}{\pi(y)} \right|
      \leq \left(\frac{L_{K_\varepsilon}}{K_{\min,\varepsilon}}
      + \frac{L_\pi}{\pi_{\min}}\right)\,|x-y|.
    \]

    \item\label{prop:lipschitz:6}
    \[
    \left| K_\varepsilon* \left(\log \frac{K_\varepsilon*\nu}{\pi}\right)(x)
         - K_\varepsilon* \left(\log \frac{K_\varepsilon*\nu}{\pi}\right)(y)\right|
    \leq \left(\frac{L_{K_\varepsilon}}{K_{\min,\varepsilon}}
      + \frac{L_\pi}{\pi_{\min}}\right)\,|x-y|.
    \]

    \item\label{prop:lipschitz:7}
    \[
    \left| K_\varepsilon* \left(\log \frac{K_\varepsilon*\nu}{K_\varepsilon*\pi}\right)(x)
         - K_\varepsilon* \left(\log \frac{K_\varepsilon*\nu}{K_\varepsilon*\pi}\right)(y)\right|
    \leq \left(\frac{L_{K_\varepsilon}}{K_{\min,\varepsilon}}
      + \frac{L_\pi}{\pi_{\min}}\right)\,|x-y|.
    \]

    \item\label{prop:lipschitz:8}
    \[
    \left|\operatorname{KL}(K_\varepsilon*\mu \mid \pi) 
        - \operatorname{KL}(K_\varepsilon*\nu \mid \pi) \right|
    \leq \left(\frac{2L_{K_\varepsilon}}{K_{\min,\varepsilon}}
      + \frac{L_\pi}{\pi_{\min}}\right)\,\mathcal{W}_1(\mu,\nu).
    \]
    
    \item\label{prop:lipschitz:9}
    \[
    \left| \frac{1}{(K_\varepsilon*\mu)(x)} - \frac{1}{(K_\varepsilon*\mu)(y)} \right|
    \leq \frac{L_{K_\varepsilon}}{K_{\min,\varepsilon}^2}\,|x-y|.
    \]
\end{enumerate}
\end{proposition}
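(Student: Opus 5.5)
The plan is to verify the nine items one at a time. Item (1) is simply Assumption~\ref{hypo:F_full:ii} restated, so nothing needs proving there. All the remaining items rest on three elementary facts:
\begin{itemize}
\item[(a)] The kernel bounds $K_{\min,\varepsilon}\le K_\varepsilon\le K_{\max,\varepsilon}$ and the Lipschitz constant $L_{K_\varepsilon}$ from \eqref{eq: l u bound K} and the preceding remark. Consequently every convolution $K_\varepsilon*\mu$ with a probability measure $\mu$ takes values in $[K_{\min,\varepsilon},K_{\max,\varepsilon}]$ and is $L_{K_\varepsilon}$-Lipschitz in $x$, since
\[
|K_\varepsilon*\mu(x)-K_\varepsilon*\mu(y)|\le\int|K_\varepsilon(x-z)-K_\varepsilon(y-z)|\,\mu(dz)\le L_{K_\varepsilon}|x-y|.
\]
\item[(b)] $\log$ is $1/c$-Lipschitz on $[c,\infty)$, and $t\mapsto 1/t$ is $1/c^2$-Lipschitz there.
\item[(c)] Kantorovich--Rubinstein duality together with $\mathcal W_1\le\mathcal W_2$. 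For fixed $x$ the map $z\mapsto K_\varepsilon(x-z)$ is $L_{K_\varepsilon}$-Lipschitz, so
\[
|K_\varepsilon*\mu(x)-K_\varepsilon*\nu(x)|\le L_{K_\varepsilon}\mathcal W_1(\mu,\nu).
\]
\end{itemize}

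With these, items (2), (3) and (9) are immediate. For (2) and (3), combine (a) or (c) with the $1/K_{\min,\varepsilon}$-Lipschitz bound for $\log$. For (9), write
\[
\Big|\tfrac1{K_\varepsilon*\mu(x)}-\tfrac1{K_\varepsilon*\mu(y)}\Big|=\frac{|K_\varepsilon*\mu(x)-K_\varepsilon*\mu(y)|}{K_\varepsilon*\mu(x)\,K_\varepsilon*\mu(y)}
\]
and bound the denominator below by $K_{\min,\varepsilon}^2$.

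For (4), I will show that $K_\varepsilon*\pi$ is $L_\pi$-Lipschitz, using $|K_\varepsilon*\pi(x)-K_\varepsilon*\pi(y)|\le\int K_\varepsilon(z)|\pi(x-z)-\pi(y-z)|\,dz\le L_\pi|x-y|$. I will also show that $K_\varepsilon*\pi\ge\pi_{\min}$, since it is an average of values of $\pi$ against the probability density $K_\varepsilon$; then I apply (b) with $c=\pi_{\min}$.

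Item (5) follows by writing $\log\frac{K_\varepsilon*\nu}{\pi}=\log K_\varepsilon*\nu-\log\pi$ and combining (3) with the bound $|\log\pi(x)-\log\pi(y)|\le \frac{L_\pi}{\pi_{\min}}|x-y|$. Items (6) and (7) then follow from one observation: convolving a $L$-Lipschitz function $g$ with the probability density $K_\varepsilon$ gives a function that is again $L$-Lipschitz, because
\[
|K_\varepsilon*g(x)-K_\varepsilon*g(y)|\le\int K_\varepsilon(z)\,|g(x-z)-g(y-z)|\,dz\le L|x-y|.
\]
I apply this with $g$ given by (5) for item (6), and by (3)+(4) for item (7).

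Item (8) is the only genuinely measure-dependent estimate, and I will split it as
\[
\operatorname{KL}(K_\varepsilon*\mu|\pi)-\operatorname{KL}(K_\varepsilon*\nu|\pi)=\int (K_\varepsilon*\mu-K_\varepsilon*\nu)\,g\,dx+\int K_\varepsilon*\mu\,\big(\log K_\varepsilon*\mu-\log K_\varepsilon*\nu\big)\,dx,
\]
where $g=\log\frac{K_\varepsilon*\nu}{\pi}$.
\begin{itemize}
\item The second term is bounded by $\frac{L_{K_\varepsilon}}{K_{\min,\varepsilon}}\mathcal W_1(\mu,\nu)$ using (2), because $K_\varepsilon*\mu$ integrates to one.
\item For the first term, I use Fubini and the radial symmetry of $K_\varepsilon$ to rewrite it as $\int (K_\varepsilon*g)(z)\,(\mu-\nu)(dz)$. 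By (6), $K_\varepsilon*g$ is Lipschitz with constant $\frac{L_{K_\varepsilon}}{K_{\min,\varepsilon}}+\frac{L_\pi}{\pi_{\min}}$, so Kantorovich--Rubinstein duality bounds this term by that constant times $\mathcal W_1(\mu,\nu)$.
\end{itemize}
Summing the two terms gives exactly the claimed constant $\frac{2L_{K_\varepsilon}}{K_{\min,\varepsilon}}+\frac{L_\pi}{\pi_{\min}}$.

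I expect the main obstacle to be that convolution on a compact $\mathcal X$ involves translated arguments $x-z$ that can leave $\mathcal X$. This affects three steps: the Lipschitz and lower bounds for $K_\varepsilon*\pi$ in (4), the preservation of Lipschitz constants under convolution in (6)--(7), and the symmetry/Fubini step in (8). I will first try to handle this by regarding $K_\varepsilon$ as a function of differences $x-z$ with $x,z\in\mathcal X$. When a translated argument of $\pi$ or $g$ is needed, I will extend $\pi$ (respectively $g$) to a Lipschitz function with the same constant, and the same lower bound $\pi_{\min}$ for $\pi$, via a McShane-type extension, and check that the bounds above survive this extension.
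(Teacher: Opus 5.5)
Your proposal is correct and follows the paper's proof essentially step by step. It uses the same ingredients: two-sided bounds on $K_\varepsilon$, the Lipschitz property of $\log$ and of $t\mapsto 1/t$ on $[c,\infty)$, Kantorovich--Rubinstein duality, and $\operatorname{Lip}(K_\varepsilon*g)\le\operatorname{Lip}(g)$ for (6)--(7). It also uses the identical two-term splitting of the KL difference in (8), where your appeal to (6) via the symmetry of $K_\varepsilon$ is actually the right reference (the paper cites (2) and (4) there).

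The paper does not address the translation issue on compact $\mathcal X$ that you flag; it simply manipulates $\int K_\varepsilon(z)f(x-z)\,dz$ as if the domain were translation invariant. Be aware that a McShane extension of $\pi$ or $g$ leaves $\int_{\mathcal X}K_\varepsilon(x-z)f(z)\,dz$ unchanged. The real defect is that $x\mapsto\int_{\mathcal X}K_\varepsilon(x-z)\,dz$ is not constant, so, for example, (4) as stated can fail for constant $\pi$ with $L_\pi$ chosen small. The clean way to make both your argument and the paper's rigorous is to work on a torus.
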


\begin{proof}
For \eqref{prop:lipschitz:1}, we use Assumption~\ref{hypo: F_full}.

For \eqref{prop:lipschitz:2}, 

\begin{equation*}
\begin{aligned}
\left|\log (K_\varepsilon*\mu)(x) - \log (K_\varepsilon*\nu)(x)\right|
&\le \frac{1}{K_{\min,\varepsilon}}\,
   \left|(K_\varepsilon*\mu)(x)-(K_\varepsilon*\nu)(x)\right| \\
&= \frac{1}{K_{\min,\varepsilon}}
   \left| \int K_\varepsilon(x-z)\,(\mu-\nu)(dz) \right| \\
&\le \frac{L_{K_\varepsilon}}{K_{\min,\varepsilon}}\,
   \mathcal{W}_1(\mu,\nu),
\end{aligned}
\end{equation*}

For \eqref{prop:lipschitz:3}, use that $\log$ is $1/K_{\min,\varepsilon}$-Lipschitz on $[K_{\min,\varepsilon},\infty)$ and that $x\mapsto (K_\varepsilon*\nu)(x)$ is $L_{K_\varepsilon}$-Lipschitz. Hence
\begin{align*}
    \left| \log K_\varepsilon *\nu(x) - \log K_\varepsilon *\nu(y) \right|
    &\leq \frac{1}{K_{\min,\varepsilon}}\,
      \left| K_\varepsilon*\nu(x) - K_\varepsilon*\nu(y) \right|\\
      &= \frac{1}{K_{\min,\varepsilon}}\,
      \left| \int \big(K_\varepsilon(z-x)-K_\varepsilon(z-y)\big)\nu(dz)\right|\\
      &\leq \frac{L_{K_\varepsilon}}{K_{\min,\varepsilon}}\,|x-y|.
\end{align*}

For \eqref{prop:lipschitz:4}, again we use that $\log$ is $1/\pi_{\min}$-Lipschitz on $[\pi_{\min},\infty)$ and that $\pi$ is $L_{\pi}$-Lipschitz. Hence
\begin{align*}
    \left| \log K_\varepsilon *\pi(x) - \log K_\varepsilon *\pi(y) \right|
    &\leq \frac{1}{\pi_{\min}}\, \left|  K_\varepsilon *\pi(x) -  K_\varepsilon *\pi(y) \right|\\
    &\leq \frac{1}{\pi_{\min}}\int K_\varepsilon(z)\, |\pi(x-z) - \pi (y-z)|\,dz\\
    &\leq \frac{L_\pi}{\pi_{\min}}\, |x-y|.
\end{align*}

For \eqref{prop:lipschitz:5}, again we use the Lipschitz condition of $\log$ and we have
\begin{equation}
\begin{split}
    \left| 
      \log \frac{K_\varepsilon*\nu(x)}{\pi(x)} 
      - \log \frac{K_\varepsilon*\nu(y)}{\pi(y)} \right|
    &\leq \left| \log K_\varepsilon*\nu(x) - \log K_\varepsilon*\nu(y) \right|
      + \left| \log \pi(x) - \log\pi(y) \right|\\
    &\leq \frac{1}{K_{\min,\varepsilon}}
      \left| K_\varepsilon*\nu(x) - K_\varepsilon*\nu(y) \right|
      + \frac{1}{\pi_{\min}}\, |\pi(x) - \pi(y)|\\
    &= \frac{1}{K_{\min,\varepsilon}}
      \left| \int \big(K_\varepsilon(z-x)-K_\varepsilon(z-y)\big)\nu(dz)\right|
      + \frac{1}{\pi_{\min}}\, |\pi(x) - \pi(y)| \\
    &\leq \left(\frac{L_{K_\varepsilon}}{K_{\min,\varepsilon}}
      + \frac{L_\pi}{\pi_{\min}}\right)\,|x-y|.
\end{split}
\end{equation}

For \eqref{prop:lipschitz:6}, we use \eqref{prop:lipschitz:5} and the fact that
\(\operatorname{Lip}(K_\varepsilon*f) \leq \operatorname{Lip}(f)\).

For \eqref{prop:lipschitz:7}, we use \eqref{prop:lipschitz:3}, \eqref{prop:lipschitz:4}, and
\(\operatorname{Lip}(K_\varepsilon*f) \leq \operatorname{Lip}(f)\).

For \eqref{prop:lipschitz:8}, we have
\begin{align*}
    \left |\operatorname{KL}(K_\varepsilon*\mu \mid \pi) 
        - \operatorname{KL}(K_\varepsilon*\nu \mid \pi) \right|
    &\leq \left|\int 
        \left(\log \frac{K_\varepsilon*\mu(x)}{\pi(x)} 
        - \log \frac{K_\varepsilon*\nu(x)}{\pi(x)}\right) (K_\varepsilon*\mu)(x)\,dx \right| \\
    &\quad + \left|\int  
        \big((K_\varepsilon*\mu)(x) 
        - (K_\varepsilon*\nu)(x)\big) \log \frac{K_\varepsilon*\nu(x)}{\pi(x)}\,dx \right|\\
    &\leq \int \left| 
        \log K_\varepsilon*\mu(x) 
        - \log K_\varepsilon*\nu(x) \right|(K_\varepsilon*\mu)(x)\,dx  \\
    &\quad + \left|\int  
        (\mu(x) - \nu(x))\, K_\varepsilon* \left(\log \frac{K_\varepsilon*\nu(x)}{\pi(x)} \right)\,dx \right|\\
    &\leq \left(\frac{2L_{K_\varepsilon}}{K_{\min,\varepsilon}}
      + \frac{L_\pi}{\pi_{\min}}\right)\,\mathcal{W}_1(\mu,\nu),
\end{align*}
where in the last line we used \eqref{prop:lipschitz:2} and \eqref{prop:lipschitz:4}.

For \eqref{prop:lipschitz:9},
\begin{align*}
    \left| \frac{1}{(K_\varepsilon*\mu)(x)} - \frac{1}{(K_\varepsilon*\mu)(y)} \right|
    &= \left| \frac{(K_\varepsilon*\mu)(x) - (K_\varepsilon*\mu)(y)}{(K_\varepsilon*\mu)(x)(K_\varepsilon*\mu)(y)}\right|\\
    &\leq \frac{1}{K_{\min,\varepsilon}^2}\, \left| (K_\varepsilon*\mu)(x) - (K_\varepsilon*\mu)(y)\right|\\
    &\leq \frac{L_{K_\varepsilon}}{K_{\min,\varepsilon}^2}\,|x-y|.
\end{align*}
\end{proof}

\subsection{\texorpdfstring{For Flow \eqref{eq: kernel_1}}{For flow (1)}}

Define the coefficient associated with the kernelized flow \eqref{eq: kernel_1} by
\[
a(\mu,x)
:= \frac{\delta F}{\delta \mu}(\mu,x)
+ \sigma\Big( (K_\varepsilon * \log\tfrac{K_\varepsilon*\mu}{\pi})(x)
- \operatorname{KL}(K_\varepsilon * \mu | \pi) \Big).
\]
By Proposition~\ref{prop: a_lipschitz_bounded}, \(a\) satisfies both a boundedness estimate
and a Lipschitz estimate. In particular, under Assumptions \ref{hypo: F_full}, \ref{hypo: mollifier_kernel}, and \ref{hypo: pi_full},
there exist constants \(C^1_\varepsilon>0\) and \(L^1_\varepsilon>0\) (given explicitly in
Proposition~\ref{prop: a_lipschitz_bounded}) such that, for all
\(\mu,\nu\in\mathcal{P}(\mathcal{X})\) and \(x,y\in\mathcal{X}\),
\[
|a(\mu,x)| \le C^1_\varepsilon,
\qquad
|a(\mu,x)-a(\nu,y)| \le L^1_\varepsilon\big(\mathcal{W}_2(\mu,\nu)+|x-y|\big).
\]

\begin{proof}
   Thanks to \eqref{prop:bounded:1} and \eqref{prop:bounded:2} we have
\begin{equation*}
    \left| \log \frac{K_\varepsilon * \mu(x)}{\pi(x)} \right|
    \leq \max \left( \log\frac{\pi_{\max}}{K_{\min,\varepsilon}}, 
    \log\frac{K_{\max,\varepsilon}}{\pi_{\min}} \right),
\end{equation*}
and
\begin{equation}
    \operatorname{KL}\left(K_\varepsilon * \mu | \pi \right) 
    \leq \log \frac{K_{\max,\varepsilon}}{\pi_{\min}}.
\end{equation}
Therefore, the desired boundedness condition holds with the constant
\begin{equation*}
    C^1_\varepsilon := C_F + \sigma\max \left( 
    \log\frac{\pi_{\max}}{K_{\min,\varepsilon}}, 
    \log\frac{K_{\max,\varepsilon}}{\pi_{\min}} \right)
    + \sigma\log \frac{K_{\max,\varepsilon}}{\pi_{\min}}.
\end{equation*}

   We now turn to the proof of the Lipschitz condition. 
By the triangle inequality,
\begin{equation*}
    |a(\mu,x)-a(\nu,y)| \leq (i) + \sigma(ii) + \sigma(iii),
\end{equation*}
where
\[
(i) := \left| 
        \frac{\delta F}{\delta \mu}(\mu,x) 
        - \frac{\delta F}{\delta \mu}(\nu,y)\right|,
\quad
(ii) := \left| 
        \log \frac{K_\varepsilon*\mu(x)}{\pi(x)} 
        - \log \frac{K_\varepsilon*\nu(y)}{\pi(y)} \right|,
\]
\[
(iii) := \left| 
        \operatorname{KL}(K_\varepsilon*\mu \mid \pi) 
        - \operatorname{KL}(K_\varepsilon*\nu \mid \pi)\right|.
\]

Thanks to \eqref{prop:lipschitz:1}, we have
\begin{equation}\label{eq: flow1 estination1}
(i) \leq L_F \left( \mathcal{W}_2(\mu,\nu) + \lvert x-y \rvert \right).
\end{equation}

For the term (ii), by triangle inequality we have
\begin{equation*}
    (ii)\leq \left| 
      \log \frac{K_\varepsilon*\mu(x)}{\pi(x)} 
      - \log \frac{K_\varepsilon*\nu(x)}{\pi(x)} \right|
     + \left| 
      \log \frac{K_\varepsilon*\nu(x)}{\pi(x)} 
      - \log \frac{K_\varepsilon*\nu(y)}{\pi(y)} \right|.
\end{equation*}
We bound these two term separately by \eqref{prop:lipschitz:2} and  \eqref{prop:lipschitz:4} and we get:
\begin{equation}\label{eq: flow1 estination2}
    (ii) \leq \frac{L_{K_\varepsilon}}{K_{\min,\varepsilon}}
      \mathcal{W}_1(\mu,\nu) +  \left(\frac{L_{K_\varepsilon}}{K_{\min,\varepsilon}} + \frac{L_\pi}{\pi_{\min}}\right)
      |x-y|.
\end{equation}

Finally, for the terms (iii), thanks to \eqref{prop:lipschitz:5} we have
\begin{equation}\label{eq: flow1 estination3}
    (iii) \leq \left(\frac{2L_{K_\varepsilon}}{K_{\min,\varepsilon}} + \frac{L_\pi}{\pi_{\min}}\right) \mathcal{W}_1(\mu,\nu)
\end{equation}

Combining the bounds from \eqref{eq: flow1 estination1}, \eqref{eq: flow1 estination2}, and \eqref{eq: flow1 estination3}, and using the fact that $\mathcal{W}_1(\mu,\nu) \leq \mathcal{W}_2(\mu,\nu)$, we obtain the desired Lipschitz estimate with the stated constant.

\[
    L^1_\varepsilon := L_F + \sigma\left(\frac{3L_{K_\varepsilon}}{K_{\min,\varepsilon}} + \frac{2L_\pi}{\pi_{\min}}\right).
\]

\end{proof}
\subsection{\texorpdfstring{For Flow \eqref{eq: kernel_2}}{For flow (2)}}

Define the coefficient associated with the kernelized flow \eqref{eq: kernel_2} by
\[
a(\mu,x):=\frac{\delta F}{\delta \mu}(\mu,x)
+ \sigma\Big( \big(K_\varepsilon * \log\tfrac{K_\varepsilon*\mu}{K_\varepsilon*\pi}\big)(x)
- \operatorname{KL}(K_\varepsilon * \mu | K_\varepsilon * \pi) \Big).
\]
By Proposition~\ref{prop: a_lipschitz_bounded}, \(a\) satisfies both a boundedness estimate
and a Lipschitz estimate. In particular, under Assumptions \ref{hypo: F_full}, \ref{hypo: mollifier_kernel}, and \ref{hypo: pi_full},
there exist constants \(C^2_\varepsilon>0\) and \(L^2_\varepsilon>0\) such that, for all
\(\mu,\nu\in\mathcal{P}(\mathcal{X})\) and \(x,y\in\mathcal{X}\),
\[
|a(\mu,x)| \le C^2_\varepsilon,
\qquad
|a(\mu,x)-a(\nu,y)| \le L^2_\varepsilon\big(\mathcal{W}_2(\mu,\nu)+|x-y|\big).
\]

\begin{proof}
Thanks to \eqref{prop:bounded:3} and \eqref{prop:bounded:4} we have
\begin{equation*}
    \left| \log \frac{K_\varepsilon * \mu(x)}{K_\varepsilon*\pi(x)} \right|
    \leq \max \left( \log\frac{\pi_{\max}}{K_{\min,\varepsilon}}, 
    \log\frac{K_{\max,\varepsilon}}{\pi_{\min}} \right),
\end{equation*}
and
\begin{equation}
    \operatorname{KL}\left(K_\varepsilon * \mu | K_\varepsilon*\pi \right) 
    \leq \log \frac{K_{\max,\varepsilon}}{\pi_{\min}}.
\end{equation}
Therefore, the desired boundedness condition holds with the constant
\begin{equation*}
    C^2_\varepsilon := C_F + \sigma\max \left( 
    \log\frac{\pi_{\max}}{K_{\min,\varepsilon}}, 
    \log\frac{K_{\max,\varepsilon}}{\pi_{\min}} \right)
    + \sigma\log \frac{K_{\max,\varepsilon}}{\pi_{\min}}.
\end{equation*}

We now turn to the proof of the Lipschitz condition. 
By the triangle inequality,
\begin{equation*}
    |a(\mu,x)-a(\nu,y)| \leq (i) + \sigma(ii) + \sigma(iii),
\end{equation*}
where
\[
(i) := \left| 
        \frac{\delta F}{\delta \mu}(\mu,x) 
        - \frac{\delta F}{\delta \mu}(\nu,y) \right|,
\]
\[
(ii) := \left| 
        \log \frac{K_\varepsilon*\mu(x)}{K_\varepsilon*\pi(x)} 
        - \log \frac{K_\varepsilon*\nu(y)}{K_\varepsilon*\pi(y)} \right|,
\]
\[
(iii) := \left|
        \operatorname{KL}(K_\varepsilon*\mu \mid K_\varepsilon*\pi) 
        - \operatorname{KL}(K_\varepsilon*\nu \mid K_\varepsilon*\pi) \right|.
\]

    We will show separately that each part is Lipschitz. 
    Thanks to \eqref{prop:lipschitz:1}, we have
\begin{equation}\label{eq: flow2 estination1}
(i) \leq L_F \left( \mathcal{W}_2(\mu,\nu) + \lvert x-y \rvert \right).
\end{equation}
For the term (ii), by triangle inequality we have
    \begin{align*}
         (ii) &\leq \left \lvert \log \frac{K_\varepsilon*\mu(x)}{K_\varepsilon*\pi(x)} -\log \frac{K_\varepsilon*\nu(x)}{K_\varepsilon*\pi(x)}\right \rvert+\left \lvert \log \frac{K_\varepsilon*\nu(x)}{K_\varepsilon*\pi(x)} -\log \frac{K_\varepsilon*\nu(y)}{K_\varepsilon*\pi(y)}\right \rvert
    \end{align*}
    For the first term on RHS, thanks to \eqref{prop:lipschitz:2}we have
    \begin{align*}
        \left \lvert \log \frac{K_\varepsilon*\mu(x)}{K_\varepsilon*\pi(x)} -\log \frac{K_\varepsilon*\nu(x)}{K_\varepsilon*\pi(x)}\right \rvert = \left \lvert \log K_\varepsilon*\mu(x) -\log K_\varepsilon*\nu(x)\right \rvert\leq \frac{L_{K_\varepsilon}}{K_{\min,\varepsilon}}
      \mathcal{W}_1(\mu,\nu).
    \end{align*}
    For the second term on RHS, we have
    \begin{align*}
        &\left \lvert \log \frac{K_\varepsilon*\nu(x)}{K_\varepsilon*\pi(x)} -\log \frac{K_\varepsilon*\nu(y)}{K_\varepsilon*\pi(y)}\right \rvert\\
        & \leq \left|\log K_\varepsilon*\nu(x) - \log K_\varepsilon*\nu(y) \right| + \left| \log K_\varepsilon *\pi(x) - \log K_\varepsilon *\pi(y) \right|\\
        &\leq \left(\frac{L_{K_\varepsilon}}{K_{\min,\varepsilon}} + \frac{L_\pi}{\pi_{\min}}\right)
      |x-y|
    \end{align*}
    where the second inequality follows from \eqref{prop:lipschitz:6} and \eqref{prop:lipschitz:7}. Hence

\begin{equation}\label{eq: flow2 estination2}
    (ii) \leq \frac{L_{K_\varepsilon}}{K_{\min,\varepsilon}}
      \mathcal{W}_1(\mu,\nu) +  \left(\frac{L_{K_\varepsilon}}{K_{\min,\varepsilon}} + \frac{L_\pi}{\pi_{\min}}\right)
      |x-y|.
\end{equation}

    Regarding the term $(iii)$, we have
    \begin{equation}\label{eq: flow2 estination3}
    \begin{split}
        (iii)&:= \left\lvert \operatorname{KL}(K_\varepsilon*\mu|K_\varepsilon*\pi) -\operatorname{KL}(K_\varepsilon*\nu|K_\varepsilon*\pi)\right\rvert\\
        & \leq \left \lvert \int  \left(\log \frac{K_\varepsilon*\mu(x)}{K_\varepsilon*\pi(x)} - \log \frac{K_\varepsilon*\nu(x)}{K_\varepsilon*\pi(x)}\right) K_\varepsilon*\mu(x) dx \right \rvert\\
        &+ \left \lvert\int \left ( K_\varepsilon*\mu(x) -  K_\varepsilon*\nu(x)\right)\log \frac{K_\varepsilon*\nu(x)}{K_\varepsilon*\pi(x)} dx\right \rvert\\
        &  \leq  \int  \left \lvert\log K_\varepsilon*\mu(x) - \log K_\varepsilon*\nu(x)\right \rvert K_\varepsilon*\mu(x) dx \\
        &+ \left \lvert\int \left (\mu(x) -  \nu(x)\right)K_\varepsilon* \left(\log \frac{K_\varepsilon*\nu(x)}{K_\varepsilon*\pi(x)} \right) dx\right \rvert\\
        &\leq \left(\frac{2L_{K_\varepsilon}}{K_{\min,\varepsilon}} + \frac{L_\pi}{\pi_{\min}}\right) \mathcal{W}_1(\mu,\nu),
    \end{split}
    \end{equation}
    where in the last inequality, we used \eqref{prop:lipschitz:2} and  \eqref{prop:lipschitz:7}.
    Combining \eqref{eq: flow2 estination1}, \eqref{eq: flow2 estination2}, and \eqref{eq: flow2 estination3} with the fact that $\mathcal{W}_1(\mu,\nu)\leq \mathcal{W}_2(\mu,\nu)$, we conclude that $L^2_\varepsilon = L^1_\varepsilon$, which completes the proof.

\end{proof}

\subsection{\texorpdfstring{For Flow \eqref{eq: kernel_3}}{For flow (3)}}
Define the coefficient associated with the kernelized flow \eqref{eq: kernel_3} by
\[
a(\mu,x)
:= \frac{\delta F}{\delta \mu}(\mu,x)
+ \sigma\Bigg(
    \log \frac{(K_\varepsilon*\mu)(x)}{\pi(x)}
    + \Big(K_\varepsilon*\frac{\mu}{K_\varepsilon*\mu}\Big)(x)
    - \int_{\mathcal{X}} \log \frac{(K_\varepsilon*\mu)(z)}{\pi(z)}\,\mu(z)\,dz
    - 1
\Bigg).
\]
Under Assumptions \ref{hypo: F_full}, \ref{hypo: mollifier_kernel}, and \ref{hypo: pi_full},
and by Proposition~\ref{prop: a_lipschitz_bounded}, there exist constants
\(C^3_\varepsilon>0\) and \(L^3_\varepsilon>0\) such that, for all
\(\mu,\nu\in\mathcal{P}(\mathcal{X})\) and \(x,y\in\mathcal{X}\),
\[
|a(\mu,x)| \le C^3_\varepsilon,
\qquad
|a(\mu,x)-a(\nu,y)| \le L^3_\varepsilon\big(\mathcal{W}_2(\mu,\nu)+|x-y|\big).
\]

\begin{proof}
  For the boundedness condition, we have already established uniform bounds for 
$\tfrac{\delta F}{\delta \mu}(\mu,x)$ and 
$\log \tfrac{K_\varepsilon*\mu(x)}{\pi(x)}$ from \eqref{prop:bounded:1} and \eqref{prop:bounded:2}. In particular, we recall that
\begin{equation*}
    \left| \frac{\delta F}{\delta \mu} (\mu,x) \right|
    + \left| \log \tfrac{K_\varepsilon*\mu(x)}{\pi(x)} \right|
    + \left| \int \log \tfrac{K_\varepsilon*\mu(x)}{\pi(x)} \,\mu(x)\, dx \right|
    \leq C_F
    + 2\max\!\left( 
        \log\frac{\pi_{\max}}{K_{\min,\varepsilon}},\,
        \log\frac{K_{\max,\varepsilon}}{\pi_{\min}} 
    \right).
\end{equation*}

Moreover, thanks to \eqref{prop:bounded:6}, for positive term
$K_\varepsilon*\!\left(\tfrac{\mu}{K_\varepsilon*\mu}\right)$, we have
\begin{equation*}
    K_\varepsilon*\left(\frac{\mu}{K_\varepsilon*\mu}\right)(x) \leq \frac{K_{\max,\varepsilon}}{K_{\min,\varepsilon}}.
\end{equation*}
Hence we conclude that
\begin{equation*}
    C^3_\varepsilon 
    = C_F
    + 2\sigma \max\!\left(
        \log\frac{\pi_{\max}}{K_{\min,\varepsilon}},\,
        \log\frac{K_{\max,\varepsilon}}{\pi_{\min}}
      \right)
    + \sigma\left( \tfrac{K_{\max,\varepsilon}}{K_{\min,\varepsilon}} + 1 \right).
\end{equation*}
We now turn to the proof of the Lipschitz condition. 
By the triangle inequality,
\begin{equation*}
    |a(\mu,x)-a(\nu,y)| \leq (i) + \sigma(ii) + \sigma(iii) + \sigma(iv),
\end{equation*}
where
\[
(i) := \left|
        \frac{\delta F}{\delta \mu}(\mu,x) 
        - \frac{\delta F}{\delta \mu}(\nu,y) \right|,
\]
\[
(ii) := \left|
        \log \frac{K_\varepsilon*\mu(x)}{\pi(x)} 
        - \log \frac{K_\varepsilon*\nu(y)}{\pi(y)} \right|,
\]
\[
(iii) := \left|
        K_\varepsilon*\!\left(\tfrac{\mu}{K_\varepsilon*\mu}\right)(x) 
        - K_\varepsilon*\!\left(\tfrac{\nu}{K_\varepsilon*\nu}\right)(y) \right|,
\]
\[
(iv) := \left|
        \int \log \frac{K_\varepsilon*\mu(z)}{\pi(z)}\,\mu(z)\,dz
        - \int \log \frac{K_\varepsilon*\nu(z)}{\pi(z)}\,\nu(z)\,dz \right|.
\]
Thanks to \eqref{prop:lipschitz:1}, we have
\begin{equation}\label{eq: flow3 estination1}
(i) \leq L_F \left( \mathcal{W}_2(\mu,\nu) + \lvert x-y \rvert \right).
\end{equation}
For the term (ii), by triangle inequality we have
\begin{equation*}
    (ii)\leq \left| 
      \log \frac{K_\varepsilon*\mu(x)}{\pi(x)} 
      - \log \frac{K_\varepsilon*\nu(x)}{\pi(x)} \right|
     + \left| 
      \log \frac{K_\varepsilon*\nu(x)}{\pi(x)} 
      - \log \frac{K_\varepsilon*\nu(y)}{\pi(y)} \right|.
\end{equation*}
We bound these two term separately by \eqref{prop:lipschitz:2} and  \eqref{prop:lipschitz:4} and we get:
\begin{equation}\label{eq: flow3 estination2}
    (ii) \leq \frac{L_{K_\varepsilon}}{K_{\min,\varepsilon}}
      \mathcal{W}_1(\mu,\nu) +  \left(\frac{L_{K_\varepsilon}}{K_{\min,\varepsilon}} + \frac{L_\pi}{\pi_{\min}}\right)
      |x-y|.
\end{equation}

    Now let us check item $(iii)$.
    We write
\[
h_\mu(x)=\frac{1}{(K_\varepsilon*\mu)(x)}, \qquad h_\nu(x)=\frac{1}{(K_\varepsilon*\nu)(x)}.
\]
Decompose
\[
\begin{aligned}
&K_\varepsilon*\left(\frac{\mu}{K_\varepsilon*\mu}\right)(y)-K_\varepsilon*\left(\frac{\nu}{K_\varepsilon*\nu}\right)(y) \\
&=\int K_\varepsilon(y-x)h_\mu(x)\,d(\mu-\nu)(x)
+\int K_\varepsilon(y-x)\big(h_\mu(x)-h_\nu(x)\big)\,d\nu(x) \\
&=:T_1+T_2.
\end{aligned}
\]

For $T_1$, set $f_\mu(x)=K_\varepsilon(y-x)h_\mu(x)$. Using \eqref{prop:bounded:7} and \eqref{prop:lipschitz:9} and the product rule for Lipschitz functions,
\[
\operatorname{Lip}(f_\mu)\le \operatorname{Lip}(K_\varepsilon)\|h_\mu\|_\infty+\|K_\varepsilon\|_\infty\operatorname{Lip}(h_\mu)
\le \frac{L_{K_\varepsilon}}{K_{\min,\varepsilon}}+\frac{K_{\max,\varepsilon}L_{K_\varepsilon}}{K_{\min,\varepsilon}^2}.
\]
By the Kantorovich--Rubinstein theorem,
\[
|T_1|=\left|\int f_\mu\,d(\mu-\nu)\right|\le \operatorname{Lip}(f_\mu)\,\mathcal{W}_1(\mu,\nu)
\le \left(\frac{L_{K_\varepsilon}}{K_{\min,\varepsilon}}+\frac{K_{\max,\varepsilon}L_{K_\varepsilon}}{K_{\min,\varepsilon}^2}\right)\mathcal{W}_1(\mu,\nu).
\]

For $T_2$, first note that
\[
|h_\mu(x)-h_\nu(x)|
=\frac{|(K_\varepsilon*(\nu-\mu))(x)|}{(K_\varepsilon*\mu)(x)(K_\varepsilon*\nu)(x)}
\le \frac{1}{K_{\min,\varepsilon}^2}\sup_{z}\left| \int K_\varepsilon(z-w)\,d(\mu-\nu)(w)\right|.
\]
For each fixed $z$, the function $w\mapsto K_\varepsilon(z-w)$ is $L_{K_\varepsilon}$-Lipschitz. Applying the Kantorovich--Rubinstein theorem gives
\[
\sup_{z}\left| \int K_\varepsilon(z-w)\,d(\mu-\nu)(w)\right|\le L_{K_\varepsilon}\,\mathcal{W}_1(\mu,\nu).
\]
Hence $\|h_\mu-h_\nu\|_\infty\le \frac{L_{K_\varepsilon}}{K_{\min,\varepsilon}^2}\mathcal{W}_1(\mu,\nu)$, and therefore
\[
|T_2|\le \|K_\varepsilon\|_\infty\,\|h_\mu-h_\nu\|_\infty
\le \frac{K_{\max,\varepsilon}L_{K_\varepsilon}}{K_{\min,\varepsilon}^2}\,\mathcal{W}_1(\mu,\nu).
\]

Combining the bounds for $T_1$ and $T_2$ yields
\begin{equation}\label{eq: flow3 estimation3}
(iii) \le \left(\frac{L_{K_\varepsilon}}{K_{\min,\varepsilon}}+\frac{2K_{\max,\varepsilon}L_{K_\varepsilon}}{K_{\min,\varepsilon}^2}\right)\mathcal{W}_1(\mu,\nu).
\end{equation}
Now let us check item $(iv)$.

\begin{equation}\label{eq: flow3 estimation4}
    \begin{split}
        (iv) & \leq \left \lvert \int  \left(\log \frac{K_\varepsilon*\mu(x)}{\pi(x)} - \log \frac{K_\varepsilon*\nu(x)}{\pi(x)}\right) \mu(x) dx \right \rvert\\
        &+ \left \lvert\int \left ( \mu(x) -  \nu(x)\right)\log \frac{K_\varepsilon*\nu(x)}{\pi(x)} dx\right \rvert\\
        &  \leq  \int  \left \lvert\log K_\varepsilon*\mu(x) - \log K_\varepsilon*\nu(x)\right \rvert \mu(x) dx \\
        &+ \left \lvert\int \left (\mu(x) -  \nu(x)\right)\log \frac{K_\varepsilon*\nu(x)}{\pi(x)} dx\right \rvert\\
        &\leq \left(\frac{2L_{K_\varepsilon}}{K_{\min,\varepsilon}} + \frac{L_\pi}{\pi_{\min}}\right) \mathcal{W}_1(\mu,\nu),
    \end{split}
    \end{equation}
    where in the last inequality, we used \eqref{prop:lipschitz:2} and  \eqref{prop:lipschitz:5}.

Combining \eqref{eq: flow3 estination1}, \eqref{eq: flow3 estination2}, \eqref{eq: flow3 estimation3} and \eqref{eq: flow3 estimation4}, we could conclude that $a(m,x)$ is Lipschitz with constant
\begin{equation*}
    L^3_\varepsilon =L_F +\left(\frac{4L_{K_\varepsilon}}{K_{\min,\varepsilon}} + \frac{2L_\pi}{\pi_{\min}}\right)+\frac{2K_{\max,\varepsilon}L_{K_\varepsilon}}{K_{\min,\varepsilon}^2}.
\end{equation*}
\end{proof}

\subsection{\texorpdfstring{For Flow \eqref{eq: kernel_4}}{For flow (4)}}

Define the coefficient associated with the kernelized flow \eqref{eq: kernel_4} by
\[
a(\mu,x)
:= \frac{\delta F}{\delta \mu}(\mu,x)
+ \sigma\Big( \big(K_\varepsilon* \log\tfrac{K_\varepsilon*\mu}{\pi}\big)(x)
- \operatorname{KL}(K_\varepsilon * \mu \mid \pi) \Big).
\]
By Proposition~\ref{prop: a_lipschitz_bounded}, \(a\) satisfies both a boundedness estimate
and a Lipschitz estimate. In particular, under Assumptions \ref{hypo: F_full}, \ref{hypo: mollifier_kernel}, and \ref{hypo: pi_full},
there exist constants \(C^4_\varepsilon>0\) and \(L^4_\varepsilon>0\) such that, for all
\(\mu,\nu\in\mathcal{P}(\mathcal{X})\) and \(x,y\in\mathcal{X}\),
\[
|a(\mu,x)| \le C^4_\varepsilon,
\qquad
|a(\mu,x)-a(\nu,y)| \le L^4_\varepsilon\big(\mathcal{W}_2(\mu,\nu)+|x-y|\big).
\]

\begin{proof}
\textit{Boundedness.}
By \eqref{prop:bounded:2} and $\int_{\mathcal{X}} K_\varepsilon=1$,
\begin{align*}
\left| \big(K_\varepsilon * \log \tfrac{K_\varepsilon* \mu}{\pi}\big)(x) \right|
&= \left| \int K_\varepsilon(x-y)\,\log \tfrac{(K_\varepsilon* \mu)(y)}{\pi(y)}\,dy \right| \\
&\le \int K_\varepsilon(x-y)\,\left| \log \tfrac{(K_\varepsilon* \mu)(y)}{\pi(y)} \right| dy \\
&\le \max\!\left( \log\frac{\pi_{\max}}{K_{\min,\varepsilon}},\ \log\frac{K_{\max,\varepsilon}}{\pi_{\min}} \right).
\end{align*}
By the same reasoning, $\operatorname{KL}(K_\varepsilon* \mu | \pi)$ is bounded by the same constant. Hence we may take
\[
C_\varepsilon^4 := C_F + \sigma\,\max\!\left( \log\frac{\pi_{\max}}{K_{\min,\varepsilon}},\ \log\frac{K_{\max,\varepsilon}}{\pi_{\min}} \right).
\]

We now turn to the proof of the Lipschitz condition. By the triangle inequality,
\begin{align*}
|a(\mu,x)-a(\nu,y)|
&\le (i)+\sigma(ii)+\sigma(iii),
\end{align*}
where
\[
(i):=\Big|\tfrac{\delta F}{\delta \mu}(\mu,x)-\tfrac{\delta F}{\delta \mu}(\nu,y)\Big|,
\quad
(ii):=\Big|\big(K_\varepsilon* \log \tfrac{K_\varepsilon*\mu}{\pi}\big)(x)
-\big(K_\varepsilon* \log \tfrac{K_\varepsilon*\nu}{\pi}\big)(y)\Big|,
\]
\[
(iii):=\Big|\operatorname{KL}(K_\varepsilon * \mu | \pi)-\operatorname{KL}(K_\varepsilon * \nu | \pi)\Big|.
\]
For $(i)$, Assumption~\ref{hypo: F_full} yields
\begin{equation}\label{eq: flow4 estimation1}
(i)\le L_F\big(\mathcal{W}_2(\mu,\nu)+|x-y|\big).
\end{equation}
For $(ii)$, split into a measure part and a space part:
\begin{equation}\label{eq: flow4 estimation2}
\begin{split}
(ii)\ &\le\ \Big|\big(K_\varepsilon* \log \tfrac{K_\varepsilon*\mu}{\pi}\big)(x)
-\big(K_\varepsilon* \log \tfrac{K_\varepsilon*\nu}{\pi}\big)(x)\Big| \\
& + \Big|\big(K_\varepsilon* \log \tfrac{K_\varepsilon*\nu}{\pi}\big)(x)
-\big(K_\varepsilon* \log \tfrac{K_\varepsilon*\nu}{\pi}\big)(y)\Big|\\
& \leq \int K_\varepsilon(x-y) \left | \log \frac{K_\varepsilon * \mu}{\pi}(y) -\log \frac{K_\varepsilon * \nu}{\pi}(y)\right| dy\\
& + \int K_\varepsilon(z) \left| \log \frac{K_\varepsilon * \nu}{\pi}(x-z) -\log \frac{K_\varepsilon * \nu}{\pi}(y-z)\right|dx\\
& \leq \left(\frac{L_{K_\varepsilon}}{K_{\min,\varepsilon}}+\frac{L_\pi}{\pi_{\min}}\right)\big(\mathcal{W}_1(\mu,\nu)+|x-y|\big),
\end{split}
\end{equation}
where the last inequality is from \eqref{prop:lipschitz:2} and \eqref{prop:lipschitz:5}.

For $(iii)$, 

\begin{equation}\label{eq: flow4 estimation3}
    (iii) \leq \left(\frac{2L_{K_\varepsilon}}{K_{\min,\varepsilon}} + \frac{L_\pi}{\pi_{\min}}\right) \mathcal{W}_1(\mu,\nu),
\end{equation}
because of \eqref{prop:lipschitz:8}.

Combining \eqref{eq: flow4 estimation1}, \eqref{eq: flow4 estimation2} and \eqref{eq: flow4 estimation3} we could choose
\[
L^4_\varepsilon = L_F+\sigma\left(\frac{3L_{K_\varepsilon}}{K_{\min,\varepsilon}}+\frac{2L_\pi}{\pi_{\min}}\right).
\]
\end{proof}

\subsection{Boundness and Lipschitz Condition for Kernelized Fisher-Rao Gradient Flow Regularized by Chi Square Divergence}\label{subsec: chisq}

\begin{proof}
Using the uniform bound on $\frac{\delta F}{\delta \mu}$ and $\pi\ge \pi_{\min}$,
\[
\left|K_\varepsilon\!\ast\!\left(\frac{K_\varepsilon\!\ast\!\mu}{\pi}\right)(x)\right|
= \left|\int K_\varepsilon(x-z)\,\frac{(K_\varepsilon\!\ast\!\mu)(z)}{\pi(z)}\,dz\right|
\le \frac{K_{\max,\varepsilon}}{\pi_{\min}},
\]
and the same bound holds for $\int K_\varepsilon\!\ast\!\left(\frac{K_\varepsilon\!\ast\!\mu}{\pi}\right)\,d\mu$.
Hence
\[
|a(\mu,x)|
\le C_F + 4\sigma\,\frac{K_{\max,\varepsilon}}{\pi_{\min}}
=: C_\varepsilon^{5}.
\]

We now analyze the Lipschitz condition. By the triangle inequality,
\begin{equation*}
|a(\mu,x)-a(\nu,y)| \leq (i) + 2\sigma (ii) + 2\sigma (iii),
\end{equation*}
where
\[
(i):=\left|\frac{\delta F}{\delta \mu}(\mu,x)-\frac{\delta F}{\delta \mu}(\nu,y)\right|,
\]
\[
(ii):=\left|K_\varepsilon*\left(\frac{K_\varepsilon*\mu}{\pi}\right)(x)-K_\varepsilon*\left(\frac{K_\varepsilon*\nu}{\pi}\right)(y)\right|,
\]
\[
(iii):=\left|\int K_\varepsilon*\left(\frac{K_\varepsilon*\mu}{\pi}\right)\,d\mu - \int K_\varepsilon*\left(\frac{K_\varepsilon*\nu}{\pi}\right)\,d\nu\right|.
\]
For $(i)$,
\[
(i)\le L_F\big(\mathcal{W}_2(\mu,\nu)+|x-y|\big).
\]
For $(ii)$, split
\[
\left|K_\varepsilon\!\ast\!\left(\frac{K_\varepsilon\!\ast\!\mu}{\pi}\right)(x)-K_\varepsilon\!\ast\!\left(\frac{K_\varepsilon\!\ast\!\nu}{\pi}\right)(x)\right|
= \left|\int \phi_x(z)\,d(\mu-\nu)(z)\right|,
\]
with
\[
\phi_x(z):=\int \frac{K_\varepsilon(x-w)}{\pi(w)}\,K_\varepsilon(z-w)\,dw.
\]
Then $\operatorname{Lip}(\phi_x)\le \frac{L_{K_\varepsilon}}{\pi_{\min}}$, hence by the Kantorovich--Rubinstein theorem,
\[
\left|K_\varepsilon\!\ast\!\left(\frac{K_\varepsilon\!\ast\!\mu}{\pi}\right)(x)-K_\varepsilon\!\ast\!\left(\frac{K_\varepsilon\!\ast\!\nu}{\pi}\right)(x)\right|
\le \frac{L_{K_\varepsilon}}{\pi_{\min}}\,\mathcal{W}_1(\mu,\nu).
\]
Moreover,
\[
\left|K_\varepsilon\!\ast\!\left(\frac{K_\varepsilon\!\ast\!\nu}{\pi}\right)(x)-K_\varepsilon\!\ast\!\left(\frac{K_\varepsilon\!\ast\!\nu}{\pi}\right)(y)\right|
\le \frac{L_{K_\varepsilon}}{\pi_{\min}}\,|x-y|.
\]
Thus
\[
(ii)\le \frac{L_{K_\varepsilon}}{\pi_{\min}}\big(\mathcal{W}_1(\mu,\nu)+|x-y|\big).
\]
For $(iii)$, set $f_\mu(z)=K_\varepsilon\!\ast\!\left(\frac{K_\varepsilon\!\ast\!\mu}{\pi}\right)(z)$ and write
\[
(iii)\le \left|\int K_\varepsilon* \left( \frac{K_\varepsilon*\mu}{\pi} \right)d(\mu-\nu)\right| + \int \left| K_\varepsilon* \left( \frac{K_\varepsilon*\mu}{\pi} \right)- K_\varepsilon* \left( \frac{K_\varepsilon*\nu}{\pi} \right) \right|\,d\nu.
\]
Since $\operatorname{Lip}( K_\varepsilon* \left( \frac{K_\varepsilon*\mu}{\pi} \right))\le \operatorname{Lip}( \frac{K_\varepsilon*\mu}{\pi} ) \frac{L_{K_\varepsilon}}{\pi_{\min}}$, the first item is bounded by $\frac{L_{K_\varepsilon}}{\pi_{\min}}\,\mathcal{W}_1(\mu,\nu)$.
Furthermore,
\[
\left |K_\varepsilon* \left( \frac{K_\varepsilon*\mu}{\pi} \right)(x)-K_\varepsilon* \left( \frac{K_\varepsilon*\nu}{\pi}(x) \right) \right|
\le \left\|\frac{K_\varepsilon\!\ast\!(\mu-\nu)}{\pi}\right\|_\infty
\le \frac{L_{K_\varepsilon}}{\pi_{\min}}\,\mathcal{W}_1(\mu,\nu),
\]
hence
\[
(iii)\le \frac{2L_{K_\varepsilon}}{\pi_{\min}}\,\mathcal{W}_1(\mu,\nu).
\]
Using $\mathcal{W}_1(\mu,\nu)\le \mathcal{W}_2(\mu,\nu)$ and collecting the bounds gives
\[
|a(\mu,x)-a(\nu,y)|
\le \left(L_F + 6\sigma\,\frac{L_{K_\varepsilon}}{\pi_{\min}}\right)
\big(\mathcal{W}_2(\mu,\nu)+|x-y|\big).
\]
Therefore the Lipschitz estimate holds with
\[
L_\varepsilon^{5} := L_F + 6\sigma\,\frac{L_{K_\varepsilon}}{\pi_{\min}}.
\]
\end{proof}

\end{document}